\documentclass{article}[12pt]

\usepackage{amsthm}
\usepackage{mathrsfs}
\usepackage{amsmath}
\usepackage{amsfonts}
\usepackage{enumerate}
\usepackage{amssymb}
\usepackage{dsfont}
\usepackage{color}
\usepackage{mathrsfs}
\usepackage{systeme}

\usepackage[natbibapa]{apacite}
\usepackage{float}

\usepackage{emptypage}

\usepackage{hyperref}                                                                                                                 
 \hypersetup{colorlinks,citecolor=blue,filecolor=blue,linkcolor=blue,urlcolor=blue}
\usepackage[pdftex]{graphicx}
\usepackage{float}
\usepackage{enumitem}

%\tolerance=1
%\emergencystretch=\maxdimen
%\hyphenpenalty=10000
%\hbadness=10000
\addtolength{\textheight}{110pt}
\oddsidemargin=-10pt
\evensidemargin=+8pt
\topmargin=-0.5in
\textwidth=6.5in

%------------DEFINICIONES-------------

\def\G{\mathcal{G}}
\def\P{\mathbb{P}}
\def\R{\mathbb{R}}
\def \F {\mathcal{F}}
\def\E{\mathbb{E}}

\def \1nd{\mathds{1}}

\def \I{\mathbb{I}}

\def\et{\widetilde{e}_{\theta}}

\def \V{V^{(\theta)}}
\def \G{G^{(\theta)}}

\def \b{b^{(\theta)}}
\def \g{g^{(\theta)}}
\def \hth{h^{(\theta)}}
\def \ct{c^{(\theta)}}
\def \U{U^{(\theta)}}

\def \m{m_{\theta}}
\def \dd{\textup{d}}

\newtheorem{deff}{Definition}[section]
\newtheorem{thm}[deff]{Theorem}

\newtheorem{lemma}[deff]{Lemma}
\newtheorem{cor}[deff]{Corollary}
\newtheorem{rem}[deff]{Remark}

\definecolor{deeplilac}{rgb}{0.6, 0.33, 0.73}
\definecolor{newultramarine}{rgb}{0.07, 0.04, 0.56}

%\author{Pedraza, J. Manuel, \qquad Bardoux, Erik J.}
\title{Predicting the Last Zero before an exponential time of a Spectrally Negative L\'evy Process}
\author{Erik J. Baurdoux\footnote{Department of Statistics, London School of Economics and Political Science. Houghton street, {\sc London, WC2A 2AE, United Kingdom.} E-mail: e.j.baurdoux@lse.ac.uk} \quad \& \quad Jos\'e M. Pedraza\footnote{Department of Statistics, London School of Economics and Political Science. Houghton street, {\sc London, WC2A 2AE, United Kingdom.} E-mail: j.m.pedraza-ramirez@lse.ac.uk}  }
\date{\today}

\begin{document}

\maketitle

\begin{abstract}

    Given a spectrally negative L\'evy process, we predict, in a $L_1$ sense, the last passage time of the process below zero before an independent exponential time. This optimal prediction problem generalises \cite{baurdoux2018predicting} where the infinite horizon problem is solved.  Using a similar argument as that in \cite{urusov2005property}, we show that this optimal prediction problem is equivalent to solving an optimal prediction problem in a finite horizon setting. Surprisingly (unlike the infinite horizon problem) an optimal stopping time is based on a curve that is killed at the moment the mean of the exponential time is reached. That is, an optimal stopping time is the first time the process crosses above a non-negative, continuous and non-increasing curve depending on time. This curve and the value function are characterised as a solution of a system of non-linear integral equations which can be understood as a generalisation of the free boundary equations (see e.g. \cite{peskir2006optimal} Chapter IV.14.1) in the presence of jumps. As an example, we calculate numerically such curve in the Brownian motion case and a compound Poisson process with exponential sized jumps perturbed by a Brownian motion.
\end{abstract}

\noindent
{\footnotesize Keywords: L{\'{e}}vy processes, optimal prediction, optimal stopping.}

\noindent
{\footnotesize Mathematics Subject Classification (2000): 60G40, 62M20}

\section{Introduction}
The study of last exit times has received much attention in several areas of applied probability, e.g. risk theory, finance and reliability in the past few years. Consider the Cram\'er--Lundberg process, a process consisting of a deterministic drift and a compound Poisson process with only negative jumps (see Figure \ref{Cramerlundberg}), which is typically used to model the capital of an insurance company. Of particular interest is the moment of ruin, $\tau_0$ which is defined to refer to the first moment when the process becomes negative. Within the framework of the insurance company having sufficient funds to endure negative capital for a considerale amount of time, another quantity of interest is the last time, $g$ that the process is below zero. In a more general setting, we can consider a spectrally negative L\'evy process instead of the classical risk process. Several studies, for example \cite{baurdoux2009last} and \cite{chiu2005passage} studied the Laplace transform of the last time before an exponential time that a spectrally negative L\'evy process is below some given level.

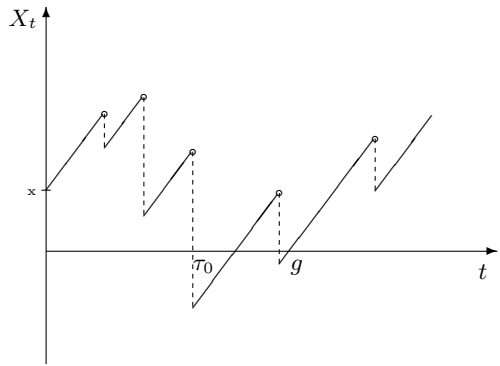
\begin{figure}[H]
\begin{center} 
\setlength{\unitlength}{.25cm} % selecting unit length 
\centering % used for centering Figure 
\begin{picture}(30,20) % picture environment with the size (dimensions) 
 % 32 length units wide, and 15 units high. 
% \put(20,4){\framebox(10,5){ $x_1^*,x_2^*,\ldots,x_M^*$}} 
 
\put(2,0){\vector(0,1){19}} 
\put(2,6){\vector(1,0){24}}
\put(0,18){\small {$X_t$}}
\put(25,4.5){\small{$t$}}

\put(1,9){\tiny{x}}
\put(1.75,9.25){\line(1,0){.5}}

%primera reclamción
\put(2,9.25){\line(3,4){3}}
\put(5.1,13.3){\circle{.3}}	
\multiput(5.1,13.3)(0,-.5){4}{\line(0,-1){.25}}

%segunda reclamación
\put(5.1,11.5){\line(3,4){2}}
\put(7.2,14.2){\circle{.3}}
\multiput(7.2,14.2)(0,-.5){13}{\line(0,-1){.25}}

%tercera reclamación

\put(7.2,7.9){\line(3,4){2.5}}
\put(9.8,11.3){\circle{.3}}
\multiput(9.8,11.3)(0,-.5){17}{\line(0,-1){.25}}

%cuarta reclamción
\put(9.8,5){\small{$\tau_0$}}

\put(15,5){\small{$g$}}

\put(9.8,3){\line(3,4){4.5}}
\put(14.4,9.1){\circle{.3}}
\multiput(14.4,9.1)(0,-.5){8}{\line(0,-1){.25}}

%quinta reclamción
\put(14.4,5.3){\line(3,4){5}}
\put(19.5,12){\circle{.3}}
\multiput(19.5,12)(0,-.5){6}{\line(0,-1){.25}}

\put(19.5,9.2){\line(3,4){3}}

\end{picture}
\end{center} 
\caption{Cram\'er--Lundberg process with $\tau_0$, the moment of ruin and $g$, the last zero.}
\label{Cramerlundberg}
\end{figure}

Last passage time is increasingly becoming a vital factor in financial modeling as shown in \cite{madan2008black} and \cite{madan2008option} where the authors concludes that the price of a European put and call options, modelled by non-negative and continuous martingales that vanish at infinity, can be expressed in terms of the probability distributions of some last passage times.

Another application of last passage times is in degradation models. \cite{paroissin2013first} propose a spectrally positive L\'evy process to model the ageing of a device in which they consider a subordinator perturbed by an independent Brownian motion. A motivation for considering this model is that the presence of a Brownian motion can model small repairs of the device and the jumps represent major deterioration.  In the literature, the failure time of a device is defined as the first hitting time of a critical level $b$. An alternative approach is to consider instead, the last time that the process is under the level $b$ since the paths of this process are not necessarily monotone and this allows the process to return below the level $b$ after it goes above $b$.\\

The aim of this work is to predict the last time a spectrally negative L\'evy process is below zero before an independent exponential time where the terms ``to predict" are understood to mean to find a stopping time that is closest (in $L^1$ sense) to this random time. This problem is an example of the optimal prediction problems which have been widely investigated by many. \cite{Graversen2001} predicted the value of the ultimate maximum of a Brownian motion in a finite horizon setting whereas \cite{shiryaev2009} focused on the last time of the attainment of the ultimate maximum of a (driftless) Brownian motion and proceeded to show that it is equivalent to predicting the last zero of the process in this setting. The work of the latter was generalised by \cite{du2008predicting} for a linear Brownian motion. \cite{bernyk2011predicting} studied the time at which a stable spectrally negative L\'evy process attains its ultimate supremum in a finite horizon of time and this was later generalised by \cite{baurdoux2014predicting} for any L\'evy process in infinite horizon of time. Investigations on the time of the ultimate minimum and the last zero of a transient diffusion process were carried out by \cite{glover2013three} and \cite{glover2014optimal} respectively within a subclass of functions. \\

In \cite{baurdoux2018predicting} the last zero of a spectrally negative L\'evy process in a infinite horizon setting is predicted. It is shown that the an optimal stopping time that minimises the last zero of a spectrally negative L\'evy process with drift is the first time the L\'evy process crosses above a fixed level $a^*\geq 0$ which is characterised in terms of the cumulative distribution of the overall infimum of the process. As is in the case in the Canadisation of American type options (see e.g. \cite{carr1998randomization}), given the memoryless property of the exponential distribution, one would expect that the generalisation of the aforementioned problem to an exponential time horizon would result in an infinite horizon optimal prediction problem, and hence have a non time dependent solution. However, it turns out that this is not the case. Indeed, we show the existence of a continuous, non-increasing and non-negative boundary such that an optimal stopping time is given by the first passage time, before the median of the exponential time, above such curve. The proof relies on solving an equivalent (finite horizon) optimal stopping problem that depends on time and the process itself. Moreover, based on the ideas of \cite{du2008predicting} we characterise the boundary and the value function as the unique solutions of a system of non-linear integral equations. Such system can be thought as a generalisation of the free boundary equation (see e.g. \cite{peskir2006optimal}, Section 14) allowing for the presence of jumps. We consider two examples where numerical calculations are implemented to find the optimal boundary. \\

This paper is organised as follows. In Section \ref{sec:Prerequisites} we introduce some important notation regarding L\'evy processes and we outline some known fluctuation identities that will be useful later. We then formulate the optimal prediction problem and we prove that it is equivalent to an optimal stopping problem. Section \ref{sec:optimalstoppingproblem} is dedicated to the solution of the optimal stopping problem. The main result of this paper is stated in Theorem \ref{thm:Vsatisfiesequation} and its proof is detailed in Section \ref{sec:proof}. The last section makes use of Theorem \ref{thm:Vsatisfiesequation} to find numerical solution of the optimal stopping problem for the Brownian motion with drift case and a compound Poisson process perturbed by a Brownian motion.

\section{Prerequisites and Formulation of the Problem}
\label{sec:Prerequisites}

We start this section by introducing some important notations and we give an overview of some fluctuation identities of spectrally negative L\'evy processes. Readers can refer to \cite{bertoin1998levy}, \cite{sato1999levy} or \cite{kyprianou2014fluctuations} for more details about L\'evy processes.\\

A L\'evy process $X=\{X_t,t\geq 0 \}$ is an almost surely c\`adl\`ag process that has independent and stationary increments such that $\P(X_0=0)=1$. Every L\'evy process $X$ is also a strong Markov $\mathbb{F}$-adapted process. For all $x\in \R$, denote $\P_x$ as the law of $X$ when started at the point $x\in \R$, that is, $\E_x(\cdot)=\E(\cdot|X_0=x)$. Due to the spatial homogeneity of L\'evy processes, the law of $X$ under $\P_x$ is the same as that of $X+x$ under $\P$.\\

Let $X$ be a spectrally negative L\'evy process, that is, a L\'evy process starting from $0$ with only negative jumps and non-monotone paths,  defined on a filtered probability space $(\Omega,\F, \mathbb{F}, \P)$ where $\mathbb{F}=\{\F_t,t\geq 0 \}$ is the filtration generated by $X$ which is naturally enlarged (see Definition 1.3.38 in \cite{bichteler2002stochastic}). We suppose that $X$ has L\'evy triplet $(\mu,\sigma, \Pi)$ where $\mu \in \R$, $\sigma\geq 0$ and $\Pi$ is a measure  (L\'evy measure) concentrated on $(-\infty,0)$ satisfying $\int_{(-\infty,0)} (1\wedge x^2)\Pi(\dd x)<\infty$. \\

Let $\psi$ be the Laplace exponent of $X$ defined as

\begin{align*}
\psi(\beta):=\log(\E(e^{\beta X_1})).
\end{align*}

Then $\psi$ exists in $\R_+$, it is strictly convex and infinitely differentiable with $\psi(0)=0$ and $\psi(\infty)=\infty$. From the L\'evy--Khintchine formula,
we know that $\psi$ takes the form

\begin{align*}
\psi(\beta)=-\mu \beta +\frac{1}{2} \sigma^2 \beta^2 + \int_{(-\infty,0)}(e^{\beta x}-1-\beta x \I_{\{ x>-1\}}) \Pi(\dd x)
\end{align*}
for all $\beta\geq 0$. Moreover, from L\'evy--It\^o decomposition, we know that there exists a Brownian Motion $B$ and an independent Poisson random measure on $\R_+\times \R$ with intensity $\dd t\times \Pi(\dd y)$ such that for each $t\geq 0$,
\begin{align*}
X_t=-\mu t+\sigma B_t+\int_{[0,t]} \int_{(-\infty,-1)} y N(\dd s, \dd y)+\int_{[0,t]} \int_{(-1,0)} y[N(\dd s, \dd y)-\Pi(\dd y) \dd s].
\end{align*}
Denote $\tau_a^+$ as the first time the process $X$ is above the level $a \in \R$, i.e.,
\begin{align*}
\tau_a^+=\inf\{ t>0: X_t>a\}.
\end{align*}
Then it can be shown that its Laplace transform is given by 

\begin{align}
\label{eq:laplacetransformtau0+}
\E(e^{-q\tau_a^+}\I_{\{\tau_a^+<\infty \}})=e^{-\Phi(q)a},
\end{align}
where $\Phi$ corresponds to the right inverse of $\psi$, which is defined by

\begin{align*}
\Phi(q)=\sup\{ \theta \geq 0: \psi(\theta)=q \}
\end{align*}  
for any $q\geq 0$.\\

 Now we introduce the scale functions.  This family of functions is the key to the derivation of fluctuation identities for spectrally negative L\'evy processes. The notation used is mainly based on \cite{kyprianou2011theory} and  \cite{kyprianou2014fluctuations} (see Chapter 8). For $q\geq 0$, the function $W^{(q)}$ is such that $W^{(q)}=0$ for $x<0$ and $W^{(q)}$ is characterised on $[0,\infty)$ as a strictly increasing and continuous function whose Laplace transform satisfies 

\begin{align*}
\int_0^{\infty} e^{-\beta x} W^{(q)}(x)\dd x=\frac{1}{\psi(\beta)-q}, \qquad \text{for } \beta>\Phi(q).
\end{align*}
We further define the function $Z^{(q)}$ by
\begin{align*}
Z^{(q)}(x)=1+q\int_0^x W^{(q)}(y)\dd y.
\end{align*}
Denote $\tau_0^-$ as the first passage time of $X$ of the set $(-\infty,0)$, that is,
\begin{align*}
\tau_0^-=\inf\{t>0: X_t<0\}.
\end{align*}
It turns out that the Laplace transform of $\tau_0^-$ can be written in terms of the scale functions. Specifically,
\begin{align}
\label{eq:laplacetransformoftau0-}
\E_x(e^{-q\tau_0^-} \I_{\{\tau_0^-<\infty \}})=Z^{(q)}(x)-\frac{q}{\Phi(q)}W^{(q)}(x)
\end{align}
for all $q\geq 0$ and $x\in \R$. %We know that for all $q\geq 0$, the right and left derivatives of $W^{(q)}$ exist. Nevertheless, for ease of notation we shall assume that $\Pi$ has no atoms when $X$ is of bounded variation, which guarantees that $W\in C^1(0,\infty)$, since all the proofs presented below remain valid using the left and right derivatives of $W$. 
It can be shown that the paths of $X$ are of finite variation if only if 
\begin{align*}
\sigma=0 \qquad\text{and} \qquad \int_{(-1,0)}y \Pi(\dd y)<\infty.
\end{align*}
In such case, we may write 
\begin{align*}
\psi(\lambda)=\delta \lambda -\int_{(-\infty,0)}(1-e^{\lambda y})\Pi(dy),
\end{align*}
where
\begin{align}
\label{eq:definitionofdeltafinitevariation}
\delta:=-\mu-\int_{(-1,0)}x\Pi(\dd x).
\end{align}
Note that monotone processes are excluded from the definition of spectrally negative L\'evy processes so we assume that $\delta>0$ when $X$ is of finite variation.
%With this notation, from the fact that $0\leq 1-e^{\lambda y}\leq 1$ for $y\leq 0$ and using the dominated convergence theorem we have that 
%
%\begin{align}
%\label{eq:expressionphibounded}
%\psi'(0+)=\delta+\int_{(-\infty,0)}x\Pi(\dd x).
%\end{align} 	
The value of $W^{(q)}$ at zero depends on the path variation of $X$. In the case where $X$ is of infinite variation we have that $W^{(q)}(0)=0$, otherwise 

\begin{align}
\label{eq:Watzero}
W^{(q)}(0)=\frac{1}{\delta}.
\end{align}
For any $a \in \R$ and $q\geq 0$, the $q$-potential measure of $X$ killed upon entering the set $[a,\infty)$ is absolutely continuous with respect to the Lebesgue measure. A density is given for all $x,y\leq a$ by
\begin{align}
\label{eq:qpotentialmeasurekilledonexita}
\int_0^{\infty} e^{-qt} \P_x\left( X_t \in \dd y,t<\tau_a^+ \right) \dd t=[e^{-\Phi(q)(a-x)}W^{(q)}(a-y)-W^{(q)}(x-y) ]\dd y.
\end{align}

Let $g_{\theta}$ be the last passage time below zero before an exponential time, i.e.

\begin{align}
\label{eq:lastzero}
g_{\theta}=\sup\{0 \leq t \leq \widetilde{e}_{\theta}:X_t\leq 0 \},
\end{align}
where $\widetilde{e}$ is an exponential random variable with parameter $\theta \geq 0$. Here, we use the convention that an exponential random variable with parameter $0$ is taken to be infinite with probability $1$. In the case of $\theta=0$, we simply denote $g=g_0$. 

Note that $g_{\theta} \leq \et<\infty$ $\P$-a.s. for all $\theta > 0$. However, in the case where $\theta=0$, $g$ could be infinite. Therefore, we assume that $\theta>0$ throughout this paper. Moreover, we have that $g_{\theta}$ has finite moments for all $ \theta>0$. 

%Then in the case $\theta=0$ suppose that $X$ is drifting to infinite, so that ensures that $g<\infty$ $\P$-a.s.

\begin{rem}
\label{rem:lastzero}
Since $X$ is a spectrally negative L\'evy process, we can exclude the case of a compound Poisson process and hence the only way of exiting the set $(-\infty,0]$ is by creeping upwards. This tells us that $X_{g_{\theta}-}=X_{g_{\theta}}=0$ in the event of $\{g_{ \theta}<\et\}$ and that $g_{\theta}=\sup\{0\leq  t \leq  \et: X_t<0\}$ holds $\P$-a.s.
\end{rem}
Clearly, up to any time $t\geq 0$ the value of $g_{\theta}$ is unknown (unless $X$ is trivial), and it is only with the realisation of the whole process that we know that the last passage time below $0$ has occurred. However, this is often too late: typically, at any time $t\geq 0$, we would like to know how close we are to the time $g_{\theta}$ so we can take some actions based on this information. We search for a stopping time $\tau_*$ of $X$ that is as ``close'' as possible to $g_{\theta}$. Consider the optimal prediction problem

\begin{align}
\label{eq:optimalprediction}
V_*=\inf_{\tau \in \mathcal{T} } \E(|g_{\theta}-\tau|),
\end{align}
where  $\mathcal{T}$ is the set of all stopping times.  \\

Note that the random time $\g_{\theta}$ is only $\mathbb{F}$ measurable so it is not immediately obvious how to solve the optimal prediction problem by using the theory of optimal stopping. Hence, in order to find the solution (and hence prove Theorem \ref{thm:solutionoftheOPP}) we solved an equivalent optimal stopping problem. In the next Lemma we establish an equivalence between the optimal prediction problem \eqref{eq:optimalprediction} and an optimal stopping problem. This equivalence is mainly based on the work of \cite{urusov2005property}. 
\begin{lemma}
\label{lemma:equivalentoptimalstoppingproblem}
Suppose that $\{X_t, t\geq 0\}$ is a spectrally negative L\'evy process. Let $g_{\theta}$ be the last time that $X$ is below the level zero before an exponential time $\widetilde{e}_{\theta}$ with $\theta > 0$, as defined in \eqref{eq:lastzero}. Consider the optimal stopping problem given by 
\begin{align}
\label{eq:optimalstopping}
V=\inf_{\tau\in \mathcal{T}} \E\left( \int_0^{\tau} G^{(\theta)}(s,X_s)\dd s\right),
\end{align}
where the function $G^{(\theta)}$ is given by $G^{(\theta)}(s,x)=1+2e^{-\theta s}\left[ \frac{\theta}{\Phi(\theta)} W^{(\theta)}(x)-Z^{(\theta)}(x)\right]$ for all $x\in \R$. Then the stopping time which minimises \eqref{eq:optimalprediction} is the same which minimises \eqref{eq:optimalstopping}. In particular,

\begin{align}
V_*=V+\E(g_{\theta}).
\end{align}
\end{lemma}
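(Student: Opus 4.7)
The plan is to produce a pathwise identity that represents $|g_\theta-\tau|$ as $g_\theta$ plus an integral of an $\mathbb{F}$-adapted integrand up to $\tau$, and then to apply Fubini together with the tower property to replace the integrand with its $\F_s$-projection. The starting point is the elementary identity
\begin{align*}
|g_\theta-\tau|=g_\theta+\int_0^\tau\bigl(2\mathbb{1}_{\{s>g_\theta\}}-1\bigr)\,\dd s,
\end{align*}
valid pathwise (verify by splitting on $\{\tau\ge g_\theta\}$ and its complement). Taking expectations, and using that $\E(g_\theta)<\infty$ reduces the optimisation to candidates $\tau$ with $\E(\tau)<\infty$ (since $\tau\equiv 0$ already gives $\E|g_\theta-\tau|=\E(g_\theta)<\infty$), which legitimises Fubini and gives
\begin{align*}
\E|g_\theta-\tau|=\E(g_\theta)+\int_0^\infty \E\bigl[\mathbb{1}_{\{s<\tau\}}\bigl(2\,\E[\mathbb{1}_{\{s>g_\theta\}}\mid\F_s]-1\bigr)\bigr]\,\dd s.
\end{align*}

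The core computation is then to show that $2\,\P(s>g_\theta\mid\F_s)-1=G^{(\theta)}(s,X_s)$. I would decompose $\{s>g_\theta\}=\{s>\widetilde{e}_\theta\}\cup\{s\le\widetilde{e}_\theta,\ X_t>0\ \forall t\in[s,\widetilde{e}_\theta]\}$ (modulo a null event). For the first piece, independence of $\widetilde{e}_\theta$ and $X$ gives $\P(s>\widetilde{e}_\theta\mid\F_s)=1-e^{-\theta s}$. For the second, integrate $\widetilde{e}_\theta$ out first and then apply the strong Markov property of $X$ at time $s$ together with the change of variables $v=u-s$:
\begin{align*}
\int_s^\infty\theta e^{-\theta u}\P_{X_s}(\tau_0^->u-s)\,\dd u=e^{-\theta s}\bigl(1-\E_{X_s}[e^{-\theta\tau_0^-}\mathbb{1}_{\{\tau_0^-<\infty\}}]\bigr).
\end{align*}
Substituting the scale-function formula \eqref{eq:laplacetransformoftau0-} yields
\begin{align*}
\P(s>g_\theta\mid\F_s)=1-e^{-\theta s}\Bigl[Z^{(\theta)}(X_s)-\tfrac{\theta}{\Phi(\theta)}W^{(\theta)}(X_s)\Bigr],
\end{align*}
from which $2\,\P(s>g_\theta\mid\F_s)-1$ agrees with the stated $G^{(\theta)}(s,X_s)$ line by line.

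Putting the two steps together gives
\begin{align*}
\E|g_\theta-\tau|=\E(g_\theta)+\E\!\left(\int_0^\tau G^{(\theta)}(s,X_s)\,\dd s\right)
\end{align*}
for every $\mathbb{F}$-stopping time $\tau$. Since $\E(g_\theta)$ does not depend on $\tau$, infimising over $\mathcal{T}$ immediately yields $V_*=\E(g_\theta)+V$ and identifies the minimisers of the two problems. The only delicate points are the justification of Fubini (handled by the reduction to $\E(\tau)<\infty$) and the conditional probability calculation, where one must be careful to integrate the exponential clock out using independence before applying the strong Markov property to replace the future of $X$ by $\tau_0^-$ under $\P_{X_s}$; the latter is where I would spend the most care, as the rest is routine bookkeeping.
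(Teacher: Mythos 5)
Your proposal is correct and follows essentially the same route as the paper's proof: the pathwise identity $|g_\theta-\tau|=g_\theta+\int_0^\tau(2\mathbb{1}_{\{g_\theta\le s\}}-1)\,\dd s$, Fubini plus the tower property, the computation of $\P(g_\theta\le s\mid\F_s)$ via memorylessness of $\widetilde{e}_\theta$ and the Markov property (your explicit integration of the exponential clock is just the paper's identity $F^{(\theta)}(x)=\P_x(\widetilde{e}_\theta<\tau_0^-)$ written out), and finally the scale-function formula \eqref{eq:laplacetransformoftau0-}. Your extra remark on restricting to $\E(\tau)<\infty$ to justify Fubini is a harmless refinement the paper glosses over.
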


\begin{proof}
Fix any stopping time $\tau \in \mathcal{T}$. We have that

\begin{align*}
|g_{\theta}-\tau|= \int_0^{\tau}[2\I_{\{ g_{\theta}\leq s\}}-1]\dd s+g_{\theta}.
\end{align*}
From Fubini's theorem and the tower property of conditional expectations, we obtain 

\begin{align*}
\E\left[\int_0^{\tau} \I_{\{ g_{\theta}\leq s\}} \dd s\right]
&=\E\left[ \int_0^{\infty}\I_{\{s<\tau \}}  \E[\I_{\{ g_{\theta}\leq s\}}|\F_s] \dd s \right]\\
&=\E\left[ \int_0^{\tau}  \P( g_{\theta}\leq s |\F_s)\dd s \right].
\end{align*}
Note that in the event of $\{\widetilde{e}_{\theta}\leq s\}$, we have $g_{\theta}\leq s$ so that

\begin{align*}
\P(g_{\theta}\leq s|\F_s)=1-e^{-\theta s}+\P(g_{\theta}\leq s,\et>s|\F_s ).
\end{align*}
On the other hand for $\{\widetilde{e}_{\theta } >s\}$, as a consequence of Remark \ref{rem:lastzero}, the event $\{g_{\theta}\leq s\}$ is equal to $\{ X_u \geq 0 \text{ for all } u\in [s,\widetilde{e}_{\theta}]\}$ (up to a $\P$-null set). Hence, we get that for all $s\geq 0$ that 
\begin{align*}
\P(g_{\theta}\leq s, \et>s|\F_s)&=\P(X_u \geq  0 \text{ for all } u\in [s,\widetilde{e}_{\theta}],\et>s|\F_s )\\
&=\P\left(\inf_{0 \leq  u \leq \widetilde{e}_{\theta}-s} X_{u+s} \geq  0,\et>s|\F_s\right) \\
&= e^{-\theta s}  \P_{X_s}\left( \underline{X}_{\et } \geq 0\right) ,
\end{align*}
where the last equality follows from the lack of memory property of the exponential distribution and the Markov property for L\'evy process. Hence, we have that 
\begin{align*}
\P(g_{\theta}\leq s, \et>s|\F_s)&=e^{-\theta s}F^{(\theta)}(X_s),
\end{align*}
where for all $x\in \R$, $F^{(\theta)}(x)=\P_x(\underline{X}_{\et } \geq 0)$. Then, since $\et$ is independent of $X$, we have that for $x\in \R$,

\begin{align*}
F^{(\theta)}(x)=\P_x(\underline{X}_{\et}\geq  0)=\P_x(\et < \tau_0^-)=1-\E_x(e^{-\theta \tau_0^-} \I_{\{\tau_0^-<\infty\}})=\frac{\theta}{\Phi(\theta)}W^{(\theta)}(x)-Z^{(\theta)}(x)+1,
\end{align*}
where the last equality follows from equation \eqref{eq:laplacetransformoftau0-}. Thus,

\begin{align*}
\P(g_{\theta}\leq s |\F_s)&=1-e ^{-\theta s}+ e^{-\theta s}\left[ \frac{\theta}{\Phi(\theta)} W^{(\theta)}(X_s)-Z^{(\theta)}(X_s)+1 \right]\\
&=1+e^{-\theta s}\left[ \frac{\theta}{\Phi(\theta)} W^{(\theta)}(X_s)-Z^{(\theta)}(X_s)\right].
\end{align*}
Therefore,
\begin{align*}
V_*&=\inf_{\tau\in \mathcal{T}} \E(|g_{\theta}-\tau|)\\
&=\E(g_{\theta})+\inf_{\tau \in \mathcal{T}} \E\left(\int_0^{\tau} [2\P(g_{\theta}\leq s|\F_s)-1]\dd s  \right)\\
&=\E(g_{\theta})+\inf_{\tau \in \mathcal{T}} \E\left(\int_0^{\tau} \left(1+2e^{-\theta s}\left[ \frac{\theta}{\Phi(\theta)} W^{(\theta)}(X_s)-Z^{(\theta)}(X_s)\right]\right)\dd s  \right).
\end{align*}
The conclusion holds.
\end{proof}
Note that evaluating $\theta=0$, the function $G^{(0)}$ coincides with the gain function found in \cite{baurdoux2018predicting} (see Lemma 3.2 and Remark 3.3). In order to find the solution to the optimal stopping problem \eqref{eq:optimalstopping} (and hence \eqref{eq:optimalprediction}), we extend its definition to L\'evy process (and hence strong Markov process) $\{(t,X_t),t\geq 0\}$ in the following way. Define the function $V: \R_+\times \R \mapsto \R$ as 
\begin{align}
\label{eq:optimalstoppingforallx}
V^{(\theta)}(t,x)=\inf_{\tau\in \mathcal{T} }\E_{t,x}\left( \int_0^{\tau} G^{(\theta)}(s+t,X_{s+t})\dd s\right)= \inf_{\tau \in \mathcal{T}}\E\left( \int_0^{\tau} G^{(\theta)}(s+t,X_s+x)\dd s \right).
\end{align}
So that  
\begin{align*}
V_*=V^{(\theta)}(0,0)+\E(g_{\theta}).
\end{align*}
%Define the value $\m$ as the median of the random variable %$\et$, in other words, $\m$ is given by
%\begin{align*}
%\m=\frac{\log(2)}{\theta}.
%\end{align*}
%We also define the function $h^{(\theta)}:\R_+\mapsto \R$ as
%\begin{align}
%\label{eq:definitionofhth}
%h^{(\theta)}(t):=\inf\{ x \in \R: G^{(\theta)}(t,x)\geq 0\} %\qquad t\geq 0.
%\end{align}
The next theorem states the solution of the optimal stopping theorem \eqref{eq:optimalstoppingforallx} and hence the solution of \eqref{eq:optimalprediction}.
\begin{thm}
\label{thm:solutionoftheOPP}
Let $\{X_t, t\geq 0\}$ be any spectrally negative L\'evy process and $\widetilde{e}_{\theta}$ an exponential random variable with parameter $\theta > 0$ independent of $\mathbb{F}$. There exists a non increasing and continuous curve $\b:[0,\m]  \mapsto \R_+$ such that $\b\geq \hth$, where $h^{(\theta)}(t):=\inf\{ x \in \R: G^{(\theta)}(t,x)\geq 0\}$ and the infimum in \eqref{eq:optimalstoppingforallx} is attained by the stopping time
\begin{align*}
\tau_D=\inf\{ t \in [0,\m]: X_t \geq \b(t) \},
\end{align*}
where $\m=\log(2)/\theta$. Moreover, the function $\b$ is uniquely characterised as in Theorem \ref{thm:Vsatisfiesequation}.  
\end{thm}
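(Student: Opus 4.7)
\noindent
The plan is to exploit the explicit representation
$\G(t,x)=1+2e^{-\theta t}[F^{(\theta)}(x)-1]$ with $F^{(\theta)}(x):=\P_x(\underline{X}_{\et}\geq 0)\in[0,1]$ derived in Lemma~\ref{lemma:equivalentoptimalstoppingproblem} to first reduce the problem to a finite horizon on $[0,\m]$. For $t\geq\m=\log(2)/\theta$ one has $2e^{-\theta t}\leq 1$, so $\G(t+s,y)\geq 1-2e^{-\theta t}\geq 0$ for all $s\geq 0$ and $y\in\R$; hence $\tau=0$ is optimal and $\V(t,x)=0$. Moreover $F^{(\theta)}(x)=0$ for $x<0$, because the infimum of $X$ started from $x<0$ is trivially negative, so $\G(t,x)=1-2e^{-\theta t}<0$ for $t<\m$ and $x<0$, which already gives $\hth(t)\geq 0$ on $[0,\m)$.

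Next I would identify the stopping region $D=\{\V=0\}$ via two monotonicity properties. Pointwise on a sample path, $\G(t+s,X_s+x)$ is non-decreasing in $t$ (since $F^{(\theta)}(y)-1\leq 0$ is multiplied by the decreasing factor $e^{-\theta t}$) and non-decreasing in $x$ (since $F^{(\theta)}$ is non-decreasing). Consequently $\V$ is non-decreasing in both arguments, so $D$ is upward-closed in each coordinate and $D\cap([0,\m)\times\R)=\{(t,x):x\geq \b(t)\}$ for some non-increasing $\b:[0,\m)\to[-\infty,\infty]$. The inequality $\b\geq \hth$ (hence $\b\geq 0$) comes from a local argument: if $\G(t,x)<0$, then right-continuity of $\G(\cdot,y)$ in time and the c\`adl\`ag property of $X$ give $\E[\int_0^\varepsilon\G(t+s,X_s+x)\dd s]<0$ for small $\varepsilon>0$, so $(t,x)\in C$. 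Finiteness of $\b(t)$ on $[0,\m)$ follows from $F^{(\theta)}(x)\to 1$ as $x\to\infty$: at large starting points the integrand is uniformly bounded below by a strictly positive constant over the relevant time window, forcing immediate stopping to be optimal.

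It remains to prove continuity of $\b$ and optimality of $\tau_D$. Lower semicontinuity of $\V$, obtained via Fatou's lemma and the Feller property of $X$, yields closedness of $D$ and hence right-continuity of $\b$. Left-continuity is the main technical obstacle: following the strategy of \cite{du2008predicting} in the diffusion case, one assumes a jump $\b(t_0-)>\b(t_0)$ and derives a contradiction by combining the Dynkin-type representation of $\V$ with the $q$-potential density~\eqref{eq:qpotentialmeasurekilledonexita} and quasi-left-continuity of $X$ to show that $\V$ would be strictly negative at $(t_0,\b(t_0-))$; the L\'evy-measure contribution must be controlled using the integrability condition $\int_{(-\infty,0)}(1\wedge x^2)\Pi(\dd x)<\infty$. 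Once $\b$ is continuous, closedness of $D$ together with the general theory of optimal stopping for strong Markov processes (see \cite{peskir2006optimal}, Chapter~I) ensures that $\tau_D=\inf\{t\in[0,\m]:X_t\geq \b(t)\}$ attains the infimum in~\eqref{eq:optimalstoppingforallx}.
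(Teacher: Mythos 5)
Your plan tracks the paper's (reduce to the horizon $[0,\m]$ since $\G\geq 0$ on $[\m,\infty)\times\R$, prove monotonicity of $\V$ in both arguments, deduce the stopping region is $\{x\geq\b(t)\}$ with $\b$ non-increasing, show $\b\geq\hth$ and finiteness, then handle continuity and optimality), but two steps contain genuine gaps. For finiteness of $\b(t)$ you claim that at large starting points the integrand is ``uniformly bounded below by a strictly positive constant over the relevant time window.'' This is false for a spectrally negative L\'evy process: no matter how large $x$ is, $X_s+x$ can jump below $0$ at any time, where $\G(t+s,X_s+x)=1-2e^{-\theta(t+s)}<0$, so no such uniform lower bound exists. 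The paper instead introduces a frozen-time auxiliary problem $\mathcal{V}_t^{(\theta)}$ with $\V(t,\cdot)\geq\mathcal{V}_t^{(\theta)}$, and shows by contradiction that its stopping region is non-empty via dominated convergence (if it were empty one would get $0\geq\lim_{x\to\infty}\mathcal{V}_t^{(\theta)}(x)=\m-t>0$); your sketch needs a limiting argument of this type, not a pathwise bound.

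Second, you assert that lower semicontinuity of $\V$ (via Fatou and the Feller property) gives closedness of $D=\{\V=0\}$. Since $\V\leq 0$ we have $D=\{\V\geq 0\}$, which is closed precisely when $\V$ is \emph{upper} semicontinuous, not lower. Indeed $\G(t,\cdot)$ is non-decreasing and right-continuous with a possible upward jump at $0$ in the finite-variation case, hence u.s.c.\ and not l.s.c.; the paper correspondingly deduces that $\V$ is u.s.c.\ as an infimum of u.s.c.\ functions (the relevant tool being reverse Fatou, for integrands bounded above) and then invokes Corollary~2.9 of \cite{peskir2006optimal}. A lesser mismatch: for left-continuity of $\b$ you defer to the diffusion strategy of \cite{du2008predicting}, whereas the paper adapts \cite{lamberton2008critical}, whose boundary-continuity arguments are designed for American-type free boundaries of jump processes and are deployed through the variational inequality of Lemma~\ref{lemma:variationalcharacterization}; merely remarking that the L\'evy-measure contribution must be controlled is not a substitute for that machinery.
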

Note that the proof of Theorem \ref{thm:solutionoftheOPP} is rather long and hence is split in a series of lemmas. We dedicate Section \ref{sec:optimalstoppingproblem} for that purpose.

\section{Solution to the optimal stopping problem}
\label{sec:optimalstoppingproblem}
In this section we solve the optimal stopping problem \eqref{eq:optimalstoppingforallx}. The proof relies on showing that $\tau_D$ defined in Theorem \ref{thm:solutionoftheOPP} is indeed an optimal stopping time by using the general theory of optimal stopping and properties of the function $\V$. Hence, some properties of $\b$ are derived. The main contribution of this section (Theorem \ref{thm:Vsatisfiesequation}) characterises $\V$ and $\b$ as the unique solution of a non-linear system of integral equations within a certain family of functions.\\

Recall that the $\V$ is given by 
\begin{align*}
V^{(\theta)}(t,x)=\inf_{\tau\in \mathcal{T} }\E_{t,x}\left( \int_0^{\tau} G^{(\theta)}(s+t,X_{s+t})\dd s\right).
\end{align*}
From the proof of Lemma \ref{lemma:equivalentoptimalstoppingproblem} we note that $G^{(\theta)}$ can be written as,
\begin{align*}
G^{(\theta)}(s,x)=1+2e^{-\theta s} [F^{(\theta)}(x)-1],
\end{align*}
where $F^{(\theta)}$ is the distribution function of the positive random variable $-\underline{X}_{\et}$ given by
\begin{align}
\label{eq:definitionofFtheta}
F^{(\theta)}(x)=\frac{\theta}{\Phi(\theta)}W^{(\theta)}(x)-Z^{(\theta)}(x)+1
\end{align}
for all $x\in \R$. Now we give some intuitions about the function $G^{(\theta)}$. Recall that for all $\theta \geq 0$, $W^{\theta}$ and $Z^{(\theta)}$ are continuous and strictly increasing functions on $[0,\infty)$ such that $W^{(\theta)}(x)=0$ and $Z^{(\theta)}(x)=1$ for $x \in (-\infty,0)$. From the above, equation \eqref{eq:definitionofFtheta} and from the fact that $F^{(\theta)}$ is a distribution function, we have that for a fixed $t\geq 0$, the function $x\mapsto G^{(\theta)}(t,x)$ is strictly increasing and continuous in $[0,\infty)$ with a possible discontinuity at $0$ depending on the path variation of $X$. Moreover, we have that $\lim_{x\rightarrow \infty} G^{(\theta)}(t,x)=1$ for all $t\geq 0$. For $x<0$ and $t\geq 0$, we have that the function $G^{(\theta)}$ takes the form $G^{(\theta)}(t,x)=1-2e^{-\theta s}$. Similarly, from the fact that $F^{(\theta)}(x)-1\leq 0$ for all $x\in \R$, we have that for a fixed $x\in \R$ the function $t\mapsto G^{(\theta)}(t,x)$ is continuous and strictly increasing on $[0,\infty)$. Furthermore, from the fact that $0\leq F^{(\theta)}(x) \leq 1$, we have that the function $G$ is bounded by

\begin{align}
\label{eq:boundariesofG}
1-2e^{-\theta t}\leq G^{(\theta)}(x,t)\leq 1
\end{align}
which implies that $|G^{(\theta)}|\leq 1$. Recall that $\m$ is defined as the median of the random variable $\et$, that is,
\begin{align*}
\m=\frac{\log(2)}{\theta}.
\end{align*}

Hence from \eqref{eq:boundariesofG} we have that $G^{(\theta)}(t,x)\geq 0$ for all $x\in \R$ and $t\geq \m$. The above observations tell us that, to solve the optimal stopping problem \eqref{eq:optimalstoppingforallx}, we are interested in a stopping time such that before stopping, the process $X$ spends most of its time in the region where $\G$ is negative, taking into account that $(t,X)$ can live in the set $\{ (s,x) \in \R_+\times \R: \G(s,x)>0\}$ and then return back to the set $\{ (s,x) \in \R_+\times \R: \G(s,x)\leq 0 \}$. The only restriction that applies is that if a considerable amount of time has passed, then $\{ x \in \R: \G(s,x)>0\}=\R$ for all $s\geq \m$.\\

Recall that the function $h^{(\theta)}:\R_+\mapsto \R$ is defined as
\begin{align}
\label{eq:definitionofhth}
h^{(\theta)}(t)=\inf\{ x \in \R: G^{(\theta)}(t,x)\geq 0\}=\inf\{ x \in \R: F^{(\theta)}(x) \geq  1-\frac{1}{2}e^{\theta t} \} \qquad t\geq 0.
\end{align}
Hence, we can see that the function $h^{(\theta)}$ is a non-increasing continuous function on $[0,\m)$ such that $\lim_{t\uparrow \m} h^{(\theta)}(t)=0$ and $\hth(t)=-\infty$ for $t\in[\m,\infty)$. Moreover, from the fact that $G^{(\theta)}(t,x)<0$ for $(t,x)\in [0,\m)\times (-\infty,0)$, we have that $h^{(\theta)}(t)\geq 0$ for all $t\in [0,\m)$.\\

In order to characterise the stopping time that minimises \eqref{eq:optimalstoppingforallx}, we first derive some properties of the function $\V$.

\begin{lemma}
\label{lemma:basicpropertiesofV}
The function $\V$ is non-drecreasing in each argument. Moreover, $V^{(\theta)}(t,x) \in (-\m,0]$ for all $x\in \R$ and $t \geq 0$. In particular, $V^{(\theta)}(t,x)<0$ for any $t\geq 0$ with $x < h^{(\theta)}(t)$ and $\V(t,x)=0$ for all $(t,x)\in [\m,\infty)\times \R$.
\end{lemma}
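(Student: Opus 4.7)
The monotonicity of $V^{(\theta)}$ in each argument will follow directly from the monotonicity of $G^{(\theta)}$. Since $F^{(\theta)}$ is a distribution function, it is non-decreasing in $x$, and since $F^{(\theta)}-1 \leq 0$ while $e^{-\theta s}$ is non-increasing in $s$, the product $e^{-\theta s}(F^{(\theta)}(x)-1)$ is non-decreasing in both variables, hence so is $G^{(\theta)}$. Then for $t_1 \leq t_2$, $x_1 \leq x_2$ and any $\tau \in \mathcal{T}$ we have $G^{(\theta)}(s+t_1, X_s+x_1) \leq G^{(\theta)}(s+t_2, X_s+x_2)$ pathwise, and taking expectations and infimum over $\tau$ yields $V^{(\theta)}(t_1,x_1) \leq V^{(\theta)}(t_2,x_2)$.

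For the upper bound $V^{(\theta)}(t,x) \leq 0$ I would simply use $\tau = 0$. The identity $V^{(\theta)}(t,x) = 0$ on $[m_\theta,\infty)\times\R$ then follows from the observation already made in the paper that $G^{(\theta)}(s,y)\geq 0$ for all $s \geq m_\theta$ and $y \in \R$, which forces the integrand to be non-negative whenever $t \geq m_\theta$ and hence the infimum to be $\geq 0$; combined with the upper bound we get equality.

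For the strict lower bound I plan to use the pathwise estimate $G^{(\theta)}(s,y) \geq 1 - 2e^{-\theta s}$, which is a consequence of $F^{(\theta)} \geq 0$. For any $\tau\in\mathcal{T}$,
\begin{align*}
\int_0^\tau G^{(\theta)}(s+t, X_s+x)\, ds \;\geq\; \int_0^\tau (1 - 2e^{-\theta(s+t)})\, ds \;\geq\; \phi(t),
\end{align*}
where $\phi(t) := \int_0^{(m_\theta-t)^+}(1 - 2e^{-\theta(s+t)})\, ds$ is the deterministic pathwise minimum (the upper limit corresponds to stopping at the sign change of the integrand). A short computation gives $\phi(0) = (\log 2 - 1)/\theta$ and shows that $\phi$ is non-decreasing on $[0, m_\theta]$ with $\phi(m_\theta) = 0$, so $\phi(t) \geq (\log 2 - 1)/\theta > -\log(2)/\theta = -m_\theta$ (the strict inequality because $2\log 2 > 1$), giving $V^{(\theta)}(t,x) > -m_\theta$.

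Finally, for the strict upper bound when $x < h^{(\theta)}(t)$ (which forces $t < m_\theta$, as otherwise $h^{(\theta)}(t) = -\infty$ and the condition is vacuous), observe that by definition of $h^{(\theta)}$ we have $G^{(\theta)}(t,x) < 0$. If $x \neq 0$, then $G^{(\theta)}$ is jointly continuous at $(t,x)$, so by right-continuity of the paths of $X$ at $0$ and bounded convergence (using $|G^{(\theta)}| \leq 1$), $\E[G^{(\theta)}(s+t, X_s+x)] \to G^{(\theta)}(t,x) < 0$ as $s \downarrow 0$. Taking $\tau = \varepsilon$ with $\varepsilon > 0$ sufficiently small then yields $V^{(\theta)}(t,x) \leq \E\int_0^\varepsilon G^{(\theta)}(s+t, X_s+x)\, ds < 0$. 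The remaining case $x = 0$ (which only arises when $h^{(\theta)}(t) > 0$) follows from the monotonicity in $x$ already established: pick any $y \in (0, h^{(\theta)}(t))$ to obtain $V^{(\theta)}(t,0) \leq V^{(\theta)}(t,y) < 0$. The main subtlety is exactly this handling of the possible discontinuity of $G^{(\theta)}$ at $x = 0$ in the finite variation case, which is neatly bypassed by proving monotonicity first.
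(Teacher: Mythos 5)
Your proof is correct and follows essentially the same structure as the paper: monotonicity of $V^{(\theta)}$ from monotonicity of $G^{(\theta)}$, the upper bound via $\tau\equiv 0$, vanishing on $[\m,\infty)\times\R$ from non-negativity of $G^{(\theta)}$ there, and the lower bound from the pathwise estimate $G^{(\theta)}(s,\cdot)\geq 1-2e^{-\theta s}$. You additionally spell out what the paper leaves as ``standard arguments'' for the strict negativity on $\{x<h^{(\theta)}(t)\}$ (correctly routing around the potential jump of $G^{(\theta)}$ at $x=0$ via monotonicity in $x$), and you obtain the marginally sharper lower bound $V^{(\theta)}\geq(\log 2-1)/\theta$.
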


\begin{proof}
First, note that $V^{(\theta)}\leq 0$ follows by taking $\tau \equiv 0$ in the definition of $\V$. Moreover, since $\G \geq 0$ on $[\m,\infty)\times \R$ we have that $\V $ vanishes on $[\m,\infty)\times \R$ . The fact that $\V$ is non-decreasing in each argument follows from the non-decreasing property of the functions $t \mapsto G^{(\theta)}(t,x)$ and $x \mapsto G^{(\theta)}(t,x)$ as well as the monotonicity of the expectation. Moreover, using standard arguments we can see that $\{(t,x)\in \R_+ \times \R: x<\hth(t)\}=\{(t,x)\in \R_+ \times \R: G^{(\theta)}(t,x)<0\} \subset \{ (t,x)\in \R_+\times \R:  V^{(\theta)}(t,x)<0\}$.\\
% Indeed, let $(s,y) \in \R_+ \times \R$ such that $s<h(y)$ and take $U\subset \{(t,x)\in \R_+ \times \R: G^{(\theta)}(t,x)<0\}$ be any neighbourhood of $(s,y)$. Define the stopping time $\tau_{U}$ as the first exit time from the set $U$, that is
%\begin{align*}
%\tau_U=\inf\{ r \geq 0: (r,X_r) \notin U\}.
%\end{align*}
%Then we have that $\tau_U>0$ a.s. and 
%\begin{align*}
%V^{(\theta)}(s,y)\leq \E_{s,y}\left( \int_0^{\tau_U}G^{(\theta)}(r+s,X_{r+s}) \dd r \right)<0,
%\end{align*}
%where the strict inequality follows since $(r,X_r)\in \{(t,x)\in \R_+ \times \R: G^{(\theta)}(t,x)<0\}  $ for all $r<\tau_U$.\\

Next we will show that $V^{(\theta)}(t,x)>-\m$ for all $(t,x)\in [0,\m)\times \R$ and for all $\theta > 0$. Note that $t<\m$ if and only if $1-2e^{-\theta t}<0$. Then for all $(s,x)\in \R_+\times \R$ we have that
\begin{align*}
\G(s,x)\geq 1-2e^{-\theta s}\geq (1-2e^{-\theta s})\I_{\{s<\m \}}.
\end{align*}
Hence, for all $x\in \R$ and $t<\m$

\begin{align*}
\V(t,x) \geq \inf_{\tau\in \mathcal{T}} \E \left(\int_0^{\tau} (1-2e^{-\theta (s+t)})\I_{\{t+s<\m \}} \dd s \right) = - \sup_{\tau\in \mathcal{T}} \E \left(\int_0^{\tau} (2e^{-\theta (s+t)}-1)\I_{\{t+s<\m \}} \dd s \right).
\end{align*}
The term in the last integral is non-negative, so we obtain for all $t<\m$ and $x\in \R$ that

\begin{align*}
\V(t,x) \geq -\left(\int_0^{\infty}  (2e^{-\theta (s+t)}-1)\I_{\{t+s<\m \}}\dd s \right)
= -\left(\int_0^{\m-t}  (2e^{-\theta (s+t)}-1)\dd s \right)
>-\m.
\end{align*}

\end{proof}
By using a dynamic programming argument and the fact that $\V$ vanishes on the set $[0,\m)\times \R$ we can see that 
\begin{align*}
\V(t,x) &=\inf_{\tau\in \mathcal{T} }\E_{t,x}\left( \int_0^{\tau \wedge (\m-t)} G^{(\theta)}(s+t,X_{s+t})\dd s\right).
\end{align*}
so that (since $|\G|\leq 1$) we have that for all $t\geq 0$ and $x\in \R$,
\begin{align*}
\E_{t,x}\left( \sup_{s\geq 0} \left|  \int_0^{s \wedge (\m-t)} G^{(\theta)}(r+t,X_{r+t})\dd r \right|\right)<\infty.
\end{align*}
Moreover, as a consequence of the properties of $F^{(\theta)}$ we have that the function $\G$ is upper semi-continuous we can see that $\V$ is upper semi-continuous (since $\V$ is the infimum of upper semi-continuous functions). Therefore, from the general theory of optimal stopping (see \cite{peskir2006optimal} Corollary 2.9) we have that an optimal stopping time for \eqref{eq:optimalstoppingforallx} is given by
%\begin{lemma}
%For any $\theta>0$ we have that an optimal stopping time for \eqref{eq:optimalstoppingforallx} is given by 
\begin{align}
\label{eq:definitionofTDexptime}
\tau_{D}=\inf\{t\geq 0: (t,X_t)\in D\},
\end{align}
where $D=\{(t,x)\in \R_+\times \R: \V(t,x)=0 \}$ is a closed set.

Hence, from Lemma \ref{lemma:basicpropertiesofV}, we derive that $D=\{ (t,x) \in \R_+ \times \R: x \geq b^{(\theta)}(t)\}$, where the function $b^{(\theta)}:\R_+ \mapsto \R$ is given by

\begin{align*}
b^{(\theta)}(t)=\inf\{x \in \R: (t,x)\in D \},
\end{align*} 
for each $t\geq 0$. It follows from Lemma \ref{lemma:basicpropertiesofV} that $b^{(\theta)}$ is non-increasing and $b^{(\theta)}(t)\geq h^{(\theta)}(t)\geq 0$ for all $ t\geq 0$. Moreover, $\b(t)=-\infty$ for $t \in [\m,\infty)$, since $V^{(\theta)}(t,x)=0$ for all $t\geq \m$ and  $x \in \R$, giving us $\tau_D \leq \m$. In the case that $t<\m$, we have that $b^{(\theta)}(t)$ is finitely valued as we will prove in the following Lemma.
\begin{lemma}
\label{lemma:finitnessofb}
Let $\theta> 0$. The function $\b$ is finitely valued for all $t\in [0,\m)$.
\end{lemma}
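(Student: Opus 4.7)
The plan is to argue by contradiction: suppose $\b(t_0) = +\infty$ for some $t_0 \in [0, \m)$, meaning $\V(t_0, x) < 0$ for every $x \in \R$. The idea is to use the strong Markov property to represent $\V(t_0, x)$ as a sum that tends to a strictly positive limit as $x \to \infty$, contradicting $\V \leq 0$. A key preliminary is the limit $\V(t, y) \to 0$ as $y \to \infty$ for every $t \in [0, \m]$. For $t = \m$ this is trivial; for $t < \m$, the bound $\G(s+t, z) \geq 1 - 2e^{-\theta(s+t)}(1 - F^{(\theta)}(z))$ combined with Fubini gives
\[
\V(t, y) \geq -2 \int_0^{\m - t} e^{-\theta(s+t)}\, \E[1 - F^{(\theta)}(X_s + y)]\, \dd s,
\]
and two applications of dominated convergence (with $1 - F^{(\theta)}(z) \to 0$ as $z \to \infty$ and $X_s$ $\P$-a.s.\ finite) drive the right-hand side to $0$; combined with $\V \leq 0$ from Lemma \ref{lemma:basicpropertiesofV}, this yields the preliminary limit.

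Next, define $t^* := \inf\{s > t_0 : \b(s) < \infty\}$. Since $\b(s) = -\infty$ for $s \geq \m$, we have $t^* \leq \m < \infty$. I claim $t^* > t_0$: otherwise a sequence $s_n \downarrow t_0$ with $\b(s_n) < \infty$ exists, and by monotonicity of $\b$ one has $\b(s_n) \leq M := \b(s_1) < \infty$ for every $n$, so $(s_n, M) \in D$; closedness of $D$ then forces $(t_0, M) \in D$ and hence $\b(t_0) \leq M < \infty$, contradicting the assumption. Since $\b \equiv \infty$ on $[t_0, t^*)$, any optimal stopping time $\tau_D$ starting from $(t_0, x)$ must satisfy $\tau_D \geq t^* - t_0$ $\P$-a.s. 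Applying the strong Markov property of $X$ at time $t^* - t_0$ and invoking the optimality of $\tau_D$ (on $\{\tau_D > t^* - t_0\}$ the shifted stopping time $\tau_D - (t^* - t_0)$ realises the infimum defining $\V(t^*, X_{t^* - t_0} + x)$, while on $\{\tau_D = t^* - t_0\}$ that value is zero by definition of $D$), I obtain
\[
\V(t_0, x) = \E\!\left[\int_0^{t^* - t_0} \G(s + t_0, X_s + x)\, \dd s\right] + \E\!\left[\V(t^*, X_{t^* - t_0} + x)\right].
\]

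Finally, letting $x \to \infty$ produces the contradiction: the first term tends to $t^* - t_0 > 0$ by dominated convergence (since $\G(\cdot, z) \to 1$ as $z \to \infty$ and $|\G| \leq 1$), and the second term tends to $0$ by the preliminary limit applied at $t^*$ combined with $|\V| \leq \m$. Therefore $\V(t_0, x) \to t^* - t_0 > 0$, contradicting $\V(t_0, x) \leq 0$, and hence $\b(t_0) < \infty$.

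The main technical obstacle is the strong Markov decomposition: although intuitively standard, it requires carefully identifying the continuation value with $\V(t^*, X_{t^* - t_0} + x)$ via the optimality of $\tau_D$ and the dynamic programming principle, and paying attention to the event $\{\tau_D = t^* - t_0\}$, which may have positive probability when $\b(t^*) < \infty$ but on which $\V$ vanishes so that the formula remains valid. The preliminary limit $\V(t, y) \to 0$ is routine, but must be stated and proved explicitly as a prerequisite since the contradiction depends on it.
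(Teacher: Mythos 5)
Your approach is genuinely different from the paper's, but it contains a gap that I do not see how to repair within the argument as written. The paper, for a fixed $t$, compares $\V(t,\cdot)$ with an auxiliary problem $\mathcal{V}_t^{(\theta)}(x)=\inf_{\tau\in\mathcal{T}_{\m-t}}\E_x\bigl[\int_0^{\tau}(1+2e^{-\theta t}(F^{(\theta)}(X_s)-1))\,\dd s\bigr]$, whose gain function is time-homogeneous; the inequality $\G(s+t,x)\geq 1+2e^{-\theta t}(F^{(\theta)}(x)-1)$ together with $\tau_D\leq\m-t$ gives $\V(t,\cdot)\geq\mathcal{V}_t^{(\theta)}(\cdot)$, and if the auxiliary stopping region were empty the optimal stopping time for $\mathcal{V}_t^{(\theta)}$ would be the \emph{deterministic} time $\m-t$, so that dominated convergence as $x\to\infty$ yields $\m-t>0$ and a contradiction. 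You instead try to run dominated convergence directly on a dynamic-programming decomposition of $\V$ itself; the decomposition and your preliminary limit $\V(t,y)\to0$ as $y\to\infty$ are both correct, but the step establishing $t^*>t_0$ is not, and the whole contradiction hinges on it.

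Concretely, you suppose $t^*=t_0$, take $s_n\downarrow t_0$ with $\b(s_n)<\infty$, and assert that "by monotonicity of $\b$ one has $\b(s_n)\leq M:=\b(s_1)$." Since $\b$ is \emph{non-increasing} and $s_n\leq s_1$, the inequality runs the other way: $\b(s_n)\geq\b(s_1)=M$, and nothing prevents $\b(s_n)\uparrow+\infty$. You therefore cannot conclude $(s_n,M)\in D$, and the closedness of $D$ gives you nothing. In fact, $D$ being closed together with $\b$ non-increasing only forces $\b$ to be right-continuous, which is perfectly compatible with $\b(t_0)=+\infty$, $\b(s)<\infty$ for every $s>t_0$, and $\b(s)\uparrow+\infty$ as $s\downarrow t_0$ (e.g. $\b(s)\sim 1/(s-t_0)$). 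In that scenario $t^*=t_0$, your strong Markov decomposition collapses to the tautology $\V(t_0,x)=\V(t_0,x)$, and no contradiction is produced. Your preliminary limit cannot rescue this either: $\V(t_0,x)<0$ for all $x$ together with $\V(t_0,x)\to0$ is not contradictory. The paper's auxiliary-problem device is precisely what sidesteps this obstruction: freezing the time-dependence of the gain at $t$ and bounding stopping times by $\m-t$ makes the optimal rule deterministic when the stopping region is empty, so no knowledge of the shape of the boundary near $t$ is needed; it also delivers the explicit a priori bound $\b(t)\leq u^{(\theta)}(t)$ noted in the remark after the lemma.
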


\begin{proof}
For any $\theta >0$ and fix $t\geq 0$, consider the optimal stopping problem,

\begin{align*}
\mathcal{V}_t^{(\theta)}(x)=\inf_{\tau \in \mathcal{T}_{\m -t}} \E_x\left(\int_0^{\tau}  [1+2e^{-\theta t}( F^{(\theta)}(X_s)-1)] \dd s \right), \qquad x\in \R,
\end{align*}
where $\mathcal{T}_{\m-t}$ is the set of all stopping times of $\mathbb{F}$ bounded by $\m-t$. From the fact that for all $s \geq  0$ and $x\in \R$, $G(s+t,x)\geq 1+2e^{-\theta t} (F^{(\theta)}(x)-1)$ and that $\tau_D\in \mathcal{T}_{\m-t}$ (under $\P_{t,x}$ for all $x\in \R$), we have that
\begin{align}
\label{eq:inequality}
\V(t,x)\geq \mathcal{V}_t^{(\theta)}(x)
\end{align}
for all $x\in \R$. Hence it suffices to show that there exists $\tilde{x}_t$ (finite) sufficiently large such that $\mathcal{V}_t^{(\theta)}(x)=0$ for all $x\geq \tilde{x}_t$.\\

It can be shown that an optimal stopping time for $\mathcal{V}_t^{(\theta)}$ is $\tau_{\mathcal{D}_t}$, the first entry time before $\m-t$ to the set $\mathcal{D}_t=\{ x\in \R: \mathcal{V}_t^{(\theta)}(x)=0 \}$. We proceed by contradiction, assume that $\mathcal{D}_t=\emptyset$, then $\tau_{\mathcal{D}_t}=\m-t$. Hence, by the dominated convergence theorem and the spatial homogeneity of L\'evy processes we have that 
\begin{align*}
0 \geq \lim_{x\rightarrow \infty} \mathcal{V}_t^{(\theta)}(x)=  \E\left(\int_0^{\m-t}  \lim_{x\rightarrow \infty} [1+2e^{-\theta t}( F^{(\theta)}(X_s+x)-1)] \dd s \right)=\m-t>0
\end{align*}
which is a contradiction. Therefore, we conclude that for each $t \geq 0$, there exists a finite value $\tilde{x}_t$ such that $\b(t)\leq \tilde{x}_t$. 
\end{proof}

\begin{rem}
From the proof of Lemma \ref{lemma:finitnessofb}, we find an upper bound of the boundary $\b$. Define, for each $t\in [0,\m)$, $u^{(\theta)}(t)=\inf\{x \in \R: \mathcal{V}^{(\theta)}_t(x)=0 \}$. Then it follows that $u^{(\theta)}$ is a non-increasing finite function such that
\begin{align*}
u^{(\theta)}(t)\geq \b(t)
\end{align*}
for all $t \in [0,\m)$.
\end{rem}

Next we show that the function $\V$ is continuous.
\begin{lemma}
\label{lemma:Vcontinuity}
The function $\V$ is continuous. Moreover, for each $x\in \R$, $t \mapsto \V(t,x)$ is Lipschitz on $\R_+$ and for every $t \in \R_+$, $x \mapsto \V(t,x)$ is Lipschitz on $\R$. 
\end{lemma}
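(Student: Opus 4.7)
The plan is to prove Lipschitz continuity in each argument separately, so that joint continuity follows at once from the triangle inequality. In both cases I will compare $\V$ at two points via an $\varepsilon$-optimal stopping time for one of them, reducing everything to a one-sided bound thanks to the monotonicity from Lemma~\ref{lemma:basicpropertiesofV}. A dynamic programming argument based on $\G(s,\cdot)\geq 0$ for $s\geq \m$ (see the comment after \eqref{eq:definitionofhth}) allows me to restrict $\varepsilon$-optimal stopping times to $\tau_\varepsilon\leq \m-t$.

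For the $t$-direction, fix $x\in \mathbb{R}$ and $0\leq t_1<t_2$, and take $\tau_\varepsilon\leq \m-t_1$ an $\varepsilon$-optimal stopping time for $\V(t_1,x)$. Using $\tau_\varepsilon$ as a candidate in the problem at $(t_2,x)$ yields
\begin{align*}
\V(t_2,x)-\V(t_1,x) \leq \mathbb{E}\int_0^{\tau_\varepsilon}\bigl[\G(s+t_2,X_s+x)-\G(s+t_1,X_s+x)\bigr]\, ds + \varepsilon.
\end{align*}
Writing $\G(s,y)=1+2e^{-\theta s}[F^{(\theta)}(y)-1]$ and combining $|F^{(\theta)}-1|\leq 1$ with the elementary estimate $|e^{-\theta(s+t_2)}-e^{-\theta(s+t_1)}|\leq \theta(t_2-t_1)e^{-\theta(s+t_1)}$ bounds the integrand by $2\theta(t_2-t_1)e^{-\theta(s+t_1)}$. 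Integrating over $s\geq 0$ and letting $\varepsilon\downarrow 0$ produces the Lipschitz bound with constant~$2$.

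For the $x$-direction, fix $t\geq 0$ and $x_1<x_2$ and let $\tau_\varepsilon\leq \m-t$ be $\varepsilon$-optimal for $\V(t,x_1)$. Proceeding as above gives
\begin{align*}
\V(t,x_2)-\V(t,x_1) \leq 2e^{-\theta t}\,\mathbb{E}\int_0^{\tau_\varepsilon} e^{-\theta s}\bigl[F^{(\theta)}(X_s+x_2)-F^{(\theta)}(X_s+x_1)\bigr]\, ds + \varepsilon.
\end{align*}
Since $F^{(\theta)}$ is non-decreasing the integrand is non-negative, so $\tau_\varepsilon$ may be enlarged to $\infty$; Fubini then rewrites the expectation as $\int_{\mathbb{R}}[F^{(\theta)}(y+x_2)-F^{(\theta)}(y+x_1)]\,U^{(\theta)}(dy)$, with $U^{(\theta)}$ the $\theta$-potential measure of $X$ under $\mathbb{P}_0$. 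Taking $a\to\infty$ in \eqref{eq:qpotentialmeasurekilledonexita} identifies its density as $u^{(\theta)}(y)=e^{-\Phi(\theta) y}/\psi'(\Phi(\theta))-W^{(\theta)}(-y)$, which by the asymptotic $W^{(\theta)}(z)\sim e^{\Phi(\theta) z}/\psi'(\Phi(\theta))$ at $+\infty$ together with continuity of $W^{(\theta)}$ on $[0,\infty)$ is bounded by some constant $M$. A translation/Fubini argument using that $F^{(\theta)}$ is a CDF gives $\int_{\mathbb{R}}[F^{(\theta)}(y+x_2)-F^{(\theta)}(y+x_1)]\, dy=x_2-x_1$, yielding the Lipschitz constant $2Me^{-\theta t}$.

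The main technical obstacle I anticipate is the boundedness of $u^{(\theta)}$: for $y<0$ it is the difference of an exponentially large term and $W^{(\theta)}(-y)$, so one needs a sufficiently sharp asymptotic for $W^{(\theta)}$ at infinity together with distinct treatments of the finite- and infinite-variation cases near zero. Once this is settled, everything else reduces to elementary manipulations of $\G$ and properties of cumulative distribution functions, and Lipschitz continuity in both arguments yields joint continuity of $\V$.
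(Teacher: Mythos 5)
Your strategy matches the paper's at a high level (compare $\V$ at two points via a (near-)optimal stopping time, use monotonicity from Lemma~\ref{lemma:basicpropertiesofV} to reduce to a one-sided estimate, use Fubini and a potential density). The $t$-direction is fine, and in fact is cleaner and more explicit than the paper, which only says it follows by a similar argument.

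The genuine problem is in the $x$-direction, and you flag it yourself: the boundedness of the full resolvent density
\[
u^{(\theta)}(y)=\frac{e^{-\Phi(\theta)y}}{\psi'(\Phi(\theta))}-W^{(\theta)}(-y).
\]
Your justification, that $W^{(\theta)}(z)\sim e^{\Phi(\theta)z}/\psi'(\Phi(\theta))$ together with continuity implies the difference is bounded, does not work. Asymptotic equivalence controls the ratio, not the difference; if both terms blow up like $e^{\Phi(\theta)z}$, the difference $e^{\Phi(\theta)z}\bigl[\Phi'(\theta)-W_{\Phi(\theta)}(z)\bigr]$ (where $W_{\Phi(\theta)}(z)=e^{-\Phi(\theta)z}W^{(\theta)}(z)\uparrow \Phi'(\theta)$) is an $\infty\cdot 0$ form and establishing that it stays bounded requires a genuinely sharper estimate than the first-order asymptotic you quote. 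So as written there is a gap. The conclusion is actually true: by the Wiener--Hopf factorisation $\theta\,U^{(\theta)}$ is the law of $X_{\widetilde e_\theta}$, which is the independent convolution of $\overline X_{\widetilde e_\theta}$ (exponential with rate $\Phi(\theta)$) and the non-positive variable $\underline X_{\widetilde e_\theta}$, so $u^{(\theta)}\le\Phi(\theta)/\theta$ everywhere. If you replace the asymptotic argument with this, your proof goes through and gives an explicit constant $M=\Phi(\theta)/\theta$.

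It is worth noting how the paper sidesteps the issue. Rather than letting $a\to\infty$ in \eqref{eq:qpotentialmeasurekilledonexita}, the paper keeps the process killed at $\tau^+_{\b(0)-x}$ (valid because the optimal $\tau_x^*$ is dominated by this hitting time, using $\b$ non-increasing and $\b(0)$ finite from Lemma~\ref{lemma:finitnessofb}), so the density it needs to bound is $e^{-\Phi(\theta)(\b(0)-x)}W^{(\theta)}(\b(0)-z)-W^{(\theta)}(x-z)$ for $z\le\b(0)$. After discarding the second term and peeling off the factor $e^{\Phi(\theta)y}$, this reduces to the boundedness of $z\mapsto e^{-\Phi(\theta)z}W^{(\theta)}(z)$ on $[0,\infty)$, which is the elementary textbook fact that $W_{\Phi(\theta)}$ is increasing with finite limit $1/\psi'(\Phi(\theta))$. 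So the paper buys a much easier estimate at the cost of first having to prove $\b(0)<\infty$ and restricting the argument to $x,y\le\b(0)$; your route avoids that restriction but shifts the burden to a property of the unkilled resolvent density that needs a real proof, not the asymptotic you cite.
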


\begin{proof}
First, we are showing that, for a fixed $t\geq 0$, the function $x \mapsto V^{(\theta)}(t,x)$ is Lipschitz on $\R$. Recall that if $t\geq \m$, $V^{(\theta)}(t,x)=0$ for all $x\in \R$ so the assertion is clear. Suppose that $t<\m$. Let $x, y \in  \R$ and define $\tau_{x}^*= \tau_{D(t,x)}=\inf\{s\geq 0: X_s+x\geq \b(s+t) \}$. Since $\tau_x^*$ is optimal in $V^{(\theta)}(t,x)$ (under $\P$) we have that 
\begin{align*}
V^{(\theta)}(t,y)-V^{(\theta)}(t,x)&\leq  \E\left(\int_0^{\tau_x^*} G^{(\theta)}(s+t,X_s+y)\dd s\right)-\E\left(\int_0^{\tau_x^*} G^{(\theta)}(s+t,X_s+x)\dd s\right)\\
&=\E\left(\int_0^{\tau_x^*} 2e^{-\theta (s+t)}[F^{(\theta)}(X_s+y)-F^{(\theta)}(X_s+x)]\dd s\right).
\end{align*}
Define the stopping time 
\begin{align*}
\tau_{b(0)-x}^+=\inf \{t \geq  0: X_t \geq b^{(\theta)}(0)-x\}.
\end{align*}
Then we have that $\tau_x^* \leq \tau_{b^{(\theta)}(0)-x}^+$ (since $b^{(\theta)}$ is a non-increasing function). From the fact 
that $F^{(\theta)}$ is non-decreasing, we obtain that for $\b(0)\geq y\geq x$, 
\begin{align*}
V^{(\theta)}(t,y)-V^{(\theta)}(t,x)
&\leq 2 \E\left(\int_0^{\tau_{b(0)-x}^+} e^{-\theta s}[F^{(\theta)}(X_s+y)-F^{(\theta)}(X_s+x)]\dd s\right).
\end{align*}
Using Fubini's theorem and a density of the potential measure of the process killed upon exiting $(-\infty,\b(0)]$ (see equation \eqref{eq:qpotentialmeasurekilledonexita}) we get that
\begin{align*}
V^{(\theta)}(t,y)-V^{(\theta)}(t,x)
&\leq  2 \int_{-\infty}^{\b(0)} [F^{(\theta)}(z+y-x)-F^{(\theta)}(z)]\int_0^{\infty}e^{-\theta s}   \P_x(X_s\in \dd z,\tau_{b(0)}^+>s)\dd s \\
&= 2\int_{-\infty}^{\b(0)} [F^{(\theta)}(z+y-x)-F^{(\theta)}(z)] \left[e^{-\Phi(\theta)(\b(0)-x)} W^{(\theta)}(\b(0)-z)-W^{(\theta)}(x-z) \right] \dd z\\
&\leq  2 e^{-\Phi(\theta)(\b(0)-x)} W^{(\theta)}(\b(0)-x+y)\int_{x-y}^{\b(0)} [F^{(\theta)}(z+y-x)-F^{(\theta)}(z)]   \dd z,
\end{align*}
where in the last inequality, we used the fact that $W^{(\theta)}$ is strictly increasing and non-negative and that $F^{(\theta)}$ vanishes at $(-\infty,0)$. By an integration by parts argument, we obtain that 
\begin{align*}
\int_{x-y}^{\b(0)} [F^{(\theta)}(z+y-x)-F^{(\theta)}(z)]   \dd z=(y-x)F^{(\theta)}(\b(0)+y-x).
\end{align*}
Moreover, it can be checked that (see \cite{kyprianou2011theory} lemma 3.3) the function $z\mapsto e^{-\Phi(\theta)(z)}W^{(\theta)}(z)$ is a continuous function in the interval $[0,\infty)$ such that 
\begin{align*}
\lim_{z\rightarrow \infty} e^{-\Phi(\theta)(z)}W^{(\theta)}(z)=\frac{1}{\psi'(\Phi(\theta))}<\infty.
\end{align*}

This implies that there exist a constant $M>0$ such that for every $z\in \R$, $ 0\leq e^{-\Phi(\theta)(z)}W^{(\theta)}(z)<M$. Then we obtain that for all $x\leq y \leq \b(0)$,

\begin{align*}
0\leq \V(t,y)-\V(t,x)\leq 2M (y-x) e^{\Phi(\theta) y} \leq 2M (y-x) e^{\Phi(\theta) \b(0)}.
\end{align*}
On the other hand, since $\b(0)\geq \b(t)$ for all $t\in [0,\m)$ we have that for all $(t,x) \in  [0,\m) \times [\b(0),\infty)$, $\V(t,x)=0$. Hence we obtain that for all $x,y \in \R$ and $t\geq 0$,
\begin{align}
\label{eq:LipschitzcontinuityofVonx}
|V^{(\theta)}(t,y)-V^{(\theta)}(t,x)| \leq 2 M |y-x| e^{\Phi(\theta) \b(0)}.
\end{align}
Therefore we conclude that for a fixed $t\geq 0$, the function $x \mapsto \V(t,x)$ is Lipschitz on $\R$.\\

Using a similar argument and the fact that the function $t \mapsto e^{-\theta t}$ is Lipschitz continuous on $[0,\infty)$ we can show that for any $s,t< \m$,
\begin{align*}
|V^{(\theta)}(s,x)-V^{(\theta)}(t,x)| \leq  2\theta \m|s-t|
\end{align*}
and therefore $t\mapsto \V(t,x)$ is Lipschitz continuous for all $x\in \R$.
%
% It remains to show that $t \mapsto V^{(\theta)}(t,x)$ is Lipschitz on $[0,\infty)$ for every $x \in \R$. We know that for all $x \in \R$, $V^{(\theta)}(t,x)=0$ for all $t\geq \m$ so $t \mapsto V(t,x)$ is Lipschitz on $[\m,\infty)$ for all $x\in \R$. On the other hand, recall that the function $t \mapsto e^{-\theta t}$ is Lipschitz continuous on $[0,\infty)$. Indeed, using the fact that $e^{-\theta t}\leq 1$ for all $t\geq 0$ we have that for all $s,t \in [0,\infty)$,
%
%\begin{align*}
%\left|e^{-\theta s}-e^{-\theta t}\right|= \left|\int_s^t \theta e^{-\theta u}du\right| \leq \theta |t-s|.  
%\end{align*}
%Take $s,t \in [0,\m]$ and suppose without loss of generality that $s\geq t$. Then, since $\tau_{D(t,x)}$ is optimal for $\V(t,x)$, we have that for all $x\in \R$,
%
%\begin{align*}
%0\leq V^{(\theta)}(s,x)-V^{(\theta)}(t,x)& \leq \E\left( \int_0^{\tau_{D(t,x)}} G^{(\theta)}(r+s,X_r+x)\dd r \right)-\E\left( \int_0^{\tau_{D(t,x)}} G^{(\theta)}(r+t,X_r+x)\dd r \right)\\
%&\leq \E\left( \int_0^{\tau_{D(t,x)}}2 [e^{-\theta(r+t)}-e^{-\theta(r+s)} ]\dd r \right)\\
% &\leq 2 \theta (s-t) \m,
%\end{align*}
%where the second inequality follows from the fact that $0\leq F^{(\theta)}\leq 1$ and the last inequality results from $\tau_{D(t,x)}\leq \m-t\leq \m$. Therefore we conclude that 
%
%\begin{align*}
%|V^{(\theta)}(s,x)-V^{(\theta)}(t,x)| \leq  2\theta \m|s-t|
%\end{align*}
%and therefore $t\mapsto \V(t,x)$ is Lipschitz continuous for all $x\in \R$.
%
\end{proof}
In order to derive more properties of the boundary $\b$, we first state some auxiliary results. Recall that if $f \in C_b^{1,2}(\R_+ \times \R)$, the set of real bounded $C^{1,2}$ functions on $ \R_+ \times \R$  with bounded derivatives, the infinitesimal generator of $(t,X)$ is given by 
\begin{align}
\label{eq:generatorofX}
\mathcal{A}_{(t,X)} (f)(t,x)&=\frac{\partial }{\partial t} f(t,x)-\mu \frac{\partial }{\partial x} f(t,x)+\frac{1}{2}\sigma^2 \frac{\partial^2}{\partial x^2} f (t,x)\nonumber\\
&\qquad+ \int_{(-\infty,0)}[f(t,x+y)-f(t,x)-y\I_{\{y>-1 \}} \frac{\partial}{\partial x} f(t,x)]\Pi(\dd y). 
\end{align}
%However, sometimes is not easy to show that $f$ has continuous derivatives and then a definition of the generator in a broader sense is needed. It turns out that the generator of a L\'evy process can be defined in the sense of distributions. Indeed, it is shown in \cite{lamberton2008critical} that when $f$ is a bounded continuous function, the generator can be defined in the sense of distributions (see Proposition 2.1 and Remark 2.2). Indeed, we can define the distribution $\mathcal{A}_{(t,X)}(f)$ by $\mathcal{A}_{(t,X)}(f)=\mathcal{A}_{X}(f)+f_t$, where 
%\begin{align*}
%\langle \mathcal{A}_{X}(f), \psi \rangle
%&=\int_{\R_+} \int_{\R} f(t,x)\left[\mu \frac{\partial }{\partial x} \varphi(t,x)+\frac{1}{2}\sigma^2 \frac{\partial^2 }{\partial x^2} \varphi(t,x) \right] \dd x \dd t+\int_{\R_+}\int_{\R} f(t,x) B^*_X(\varphi)(t,x)\dd x \dd t\\
%\langle f_t, \psi \rangle
%&=- \int_{\R_+} \int_{\R} f(t,x) \frac{\partial }{\partial t} \varphi(t,x)\dd x \dd t,
%\end{align*}
%for any $\varphi \in C^{\infty}$ function with compact support on $\R_+\times  \R$ and
%\begin{align*}
%B_X^*(\varphi)(t,x)=\int_{(-\infty,0)} [\varphi(t,x-y)-\varphi(t,x)+y \frac{\partial}{\partial x}\varphi(t,x) \I_{\{ y>- 1\}}]\Pi(\dd y).
%\end{align*}

Let $C=\R_+ \times \R  \setminus D=\{ (t,x) \in \R_+ \times \R: x<\b(t)\}$ be the continuation region. Then we have that the value function $\V$ satisfies a variational inequality in the sense of distributions. The proof is analogous to the one presented in \cite{lamberton2008critical} (see Proposition 2.5) so the details are omitted.
%
% In particular, it follows from the well known fact that for all $(t,x)\in \R_+\times \R$ that the process $\{ Z_s, s\geq 0\}$ is a $\P_{t,x}$-submartingale and $\{ Z_{s\wedge \tau_D}, s\geq 0\}$ is $\P_{t,x}$-martingale, where for any $s\geq 0$,
%\begin{align*}
%Z_s=\V(t+s,X_{t+s})+\int_0^{s} \G(r+t, X_{r+t}) \dd r.
%\end{align*}
\begin{lemma}
\label{lemma:variationalcharacterization}
Fix $\theta > 0$. The distribution $\mathcal{A}_{(t,X)} \V  +\G$ is non-negative on $\R_+\times \R$. Moreover, we have that $ \mathcal{A}_{(t,X)} \V +\G=0$ on $C$.
\end{lemma}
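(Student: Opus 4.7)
The plan is to follow the classical connection between optimal stopping and variational inequalities (as in Bensoussan--Lions; see \cite{lamberton2008critical}, Proposition 2.5), modified to accommodate the jumps of $X$. The two main inputs are the submartingale characterization of $\V$ from optimal stopping theory and a mollification of $\V$ that enables an It\^o expansion despite $\V$ being only Lipschitz (Lemma \ref{lemma:Vcontinuity}). More precisely, the continuity and boundedness of $\V$ together with the general theory of optimal stopping for strong Markov processes (e.g.\ \cite{peskir2006optimal}, Chapter I) imply that, for each $(t,x)\in\R_+\times\R$, the process
\[
M_s := \V(t+s,X_{t+s}) + \int_0^s \G(t+r,X_{t+r})\,\dd r, \qquad s\geq 0,
\]
is a $\P_{t,x}$-submartingale that becomes a $\P_{t,x}$-martingale once stopped at $\tau_D$. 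This is the pathwise avatar of the optimality of $\tau_D$ and the sole input carrying optimal-stopping structure into the analytic calculation.

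I would then obtain the distributional inequality by mollification: let $\rho_\varepsilon$ be a $C_c^\infty$ mollifier on $\R_+\times\R$ and set $V_\varepsilon := \V * \rho_\varepsilon$, which converges to $\V$ locally uniformly and whose first partial derivatives are uniformly bounded by the Lipschitz constants from Lemma \ref{lemma:Vcontinuity}. Applying It\^o's formula to $V_\varepsilon(t+s,X_{t+s})$, taking $\P_{t,x}$-expectations, and combining with the submartingale property of $M$ gives, after testing against a non-negative $\varphi\in C_c^\infty(\R_+\times\R)$ and sending $\varepsilon\downarrow 0$,
\[
\langle \mathcal{A}_{(t,X)}\V + \G,\,\varphi\rangle \geq 0,
\]
which is the distributional non-negativity. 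The equality on $C$ would follow from the same scheme by restricting $\varphi$ to have compact support in $C$ and using that, with $\sigma$ the first exit of the space--time process from a compact subset of $C$ containing $\operatorname{supp}\varphi$, the stopped process $M_{\cdot\wedge\sigma}$ is a true martingale, so the submartingale inequality becomes an equality in the limit.

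The main obstacle I anticipate is passing to the $\varepsilon\downarrow 0$ limit inside the non-local part of $\mathcal{A}_{(t,X)}$,
\[
\int_{(-\infty,0)}\bigl[V_\varepsilon(t,x+y) - V_\varepsilon(t,x) - y\,\I_{\{y>-1\}}\partial_x V_\varepsilon(t,x)\bigr]\,\Pi(\dd y).
\]
For $y$ bounded away from $0$, the uniform boundedness $\V\in(-\m,0]$ from Lemma \ref{lemma:basicpropertiesofV} combined with $\Pi((-\infty,-1])<\infty$ supplies an integrable dominant and dominated convergence applies directly. Near $y=0$, however, second-order pointwise bounds on $V_\varepsilon$ blow up as $\varepsilon\downarrow 0$, so the limit must be taken in the weak/integrated formulation: after integration against $\varphi$ the jump contribution can be rewritten via Fubini as $\int\int\V(t,x)\,\mathcal{A}^{\ast}_{\mathrm{jump}}\varphi(t,x)\,\dd x\,\dd t$, where the adjoint now falls on the smooth $\varphi$ with integrand dominated by $(1\wedge y^2)\Pi(\dd y)$, and bounded convergence closes the argument using the locally uniform convergence of $V_\varepsilon$ to $\V$. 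Once this step is dispatched, the Brownian and drift contributions are handled by routine arguments, matching the structure of \cite{lamberton2008critical}, Proposition 2.5.
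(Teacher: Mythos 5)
Your proposal follows the same route the paper gestures at: the paper's ``proof'' is simply a pointer to \cite{lamberton2008critical}, Proposition 2.5, and you are filling in the Bensoussan--Lions style argument (submartingale structure from optimal stopping, mollification, passage to distributions) that that reference uses. The two core ingredients you invoke --- the $\P_{t,x}$-submartingale property of $M$ with martingality after stopping at $\tau_D$, and the Fubini/duality treatment of the non-local term so that the adjoint of the jump operator lands on the smooth test function $\varphi$ --- are exactly right, and the latter correctly exploits $\Pi(\dd y)$-integrability against $(1\wedge y^2)$ together with the boundedness of $\V$.

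One step is glossed over and should be made explicit: you cannot directly ``combine'' the It\^o formula for $V_\varepsilon$ with the submartingale property of $M$, which involves $\V$ rather than $V_\varepsilon$; the error from replacing $\V$ by $V_\varepsilon$ is of order $\varepsilon$ and does not vanish after dividing by $s$ and letting $s\downarrow 0$. The clean way to bridge the gap is to note that the space--time Lévy semigroup is translation invariant, so that $P_s(\V\ast\rho_\varepsilon)=(P_s\V)\ast\rho_\varepsilon$, and since $\rho_\varepsilon\geq 0$ the submartingale inequality $P_s\V+\int_0^s P_r\G\,\dd r\geq \V$ is preserved under convolution: $P_s V_\varepsilon+\int_0^s P_r G_\varepsilon\,\dd r\geq V_\varepsilon$. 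Only then does Dynkin's formula for the smooth $V_\varepsilon$ yield the pointwise inequality $\mathcal{A}_{(t,X)}V_\varepsilon+G_\varepsilon\geq 0$ (on $[\varepsilon,\infty)\times\R$), from which the distributional inequality follows by duality and $\varepsilon\downarrow 0$ as you describe. A similar care is needed for the equality on $C$: stopping at the exit time $\sigma$ of a compact subset does not commute with convolution, so you should mollify only after restricting to a subset of $C$ at distance $>\varepsilon$ from $\partial C$ and from $\{t\leq\varepsilon\}$, using the dynamic programming identity $\V(t,x)=\E_{t,x}\left[\V(t+s\wedge\sigma,X_{t+s\wedge\sigma})+\int_0^{s\wedge\sigma}\G\,\dd r\right]$ there. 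These are the points \cite{lamberton2008critical} handles and that your sketch leaves implicit; with them added, the argument is complete.
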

We define a special function which is useful to prove the left-continuity of the boundary $\b$. For $\theta>0$, we define an auxiliary function in the set $D$. Let
\begin{align}
\label{eq:functionphitdef}
\varphi^{(\theta)}(t,x)= \int_{(-\infty,0)} \V(t,x+y) \Pi( \dd y)+\G(t,x), \qquad (t,x)\in  D.
\end{align}
%Provided that $\varphi^{(\theta)}$ is locally integrable, we can define the distribution $\varphi^{(\theta)}$ by
%\begin{align*}
%\langle \varphi^{(\theta)}, \phi \rangle=\int_{\R_+}\int_{\R} \varphi^{(\theta)}(t,x) \phi(t,x) \dd x \dd t
%\end{align*}
%for any $\phi \in C^{\infty}$ with compact support in $\R_+\times \R$. The next Lemma states some properties of $\varphi^{(\theta)}$. 
From the fact that $\V$ vanishes on $D$ and that $\Pi$ is finite on sets of the form $(-\infty,-\varepsilon)$ for $\varepsilon>0$ we can see that $|\varphi^{(\theta)}(t,x)|<\infty$ for all $(t,x)\in D$. Moreover, by the Lemma above and the properties of $\V$ and $\G$, it can be shown that $\varphi$ is strictly positive, continuous and strictly increasing (in each argument) in the interior of $D$.\\

Now we are ready to give further properties of the curve $\b$ in the set $[0,\m)$. 
\begin{lemma}
The function $b^{(\theta)}$ is continuous on $[0,\m)$. Moreover we have that $\lim_{t\uparrow \m} \b(t)=0$ . 
\end{lemma}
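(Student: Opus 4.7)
The function $\b$ is non-increasing on $[0,\m)$, so left and right limits exist at every point. Right-continuity follows at once from the closedness of $D$ together with monotonicity: if $t_n\downarrow t$ then $(t_n,\b(t_n))\in D$ with $\b(t_n)\downarrow \b(t+)\leq \b(t)$ forces $(t,\b(t+))\in D$ and hence $\b(t+)\geq \b(t)$, giving equality. It therefore remains to prove left-continuity on $(0,\m)$ and the boundary behaviour at $\m$.

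For left-continuity I argue by contradiction: assume $\b(t_0-)>\b(t_0)$ for some $t_0\in (0,\m)$ and pick $x_0\in(\b(t_0),\b(t_0-))$, so that $(t_0,x_0)$ lies in the interior of $D$ while $(s,x_0)\in C$ for all $s<t_0$ sufficiently close to $t_0$; in particular $\V(t_0,x_0)=0$ and $\V(s,x_0)<0$. Since $\V$ vanishes in a neighbourhood of $(t_0,x_0)$, the drift, diffusion and small-jump terms of $\mathcal{A}_{(t,X)}\V$ vanish there, so the variational inequality of Lemma \ref{lemma:variationalcharacterization} evaluated at $(t_0,x_0)$ reduces to $\varphi^{(\theta)}(t_0,x_0)\geq 0$, the strict inequality $\varphi^{(\theta)}(t_0,x_0)>0$ having been observed just below \eqref{eq:functionphitdef}. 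Writing $\mathcal{A}_{(t,X)}=\partial_t+\mathcal{L}$ with $\mathcal{L}$ the spatial part of \eqref{eq:generatorofX}, the equation $\mathcal{A}_{(t,X)}\V+\G=0$ in $C$ together with the Lipschitz continuity of $r\mapsto \V(r,x_0)$ from Lemma \ref{lemma:Vcontinuity} permits integration in $r$ over $[s,t_0]$ to obtain
\begin{align*}
-\V(s,x_0)=\V(t_0,x_0)-\V(s,x_0)=-\int_s^{t_0}\bigl[\mathcal{L}\V(r,x_0)+\G(r,x_0)\bigr]\dd r.
\end{align*}
Dividing by $t_0-s$, the left-hand side is strictly positive for every $s<t_0$, whereas as $s\uparrow t_0$ continuity of $\V$, of $\G$, and dominated convergence drive the right-hand side to $-\varphi^{(\theta)}(t_0,x_0)<0$, a contradiction.

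For the behaviour at $\m$, the inequality $\b(t)\geq \hth(t)\geq 0$ gives $\b(\m-)\geq 0$, so it remains to rule out $\b(\m-)>0$. Assume $\b(\m-)=\varepsilon>0$ and pick $x_0\in (0,\varepsilon)$, so $(r,x_0)\in C$ for every $r<\m$ and $\V(\m,x_0)=0$. The same integration now carried over $[t,\m)$ yields
\begin{align*}
\V(t,x_0)=\int_t^{\m}\bigl[\mathcal{L}\V(r,x_0)+\G(r,x_0)\bigr]\dd r.
\end{align*}
On the one hand, a direct estimate on the definition of $\V(t,x_0)$ — retaining only the contribution of the event $\{X_s+x_0<\hth(s+t)\}$ on which the integrand is negative and bounded below by $-1$, and using $\hth(r)\downarrow 0$ as $r\uparrow \m$ together with the right-continuity of paths of $X$ — gives $|\V(t,x_0)|\leq(\m-t)\,\P\bigl(\inf_{s\leq \m-t}X_s<-x_0/2\bigr)=o(\m-t)$, so $\V(t,x_0)/(\m-t)\to 0$. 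On the other hand, dividing the displayed identity by $\m-t$ and applying Lebesgue differentiation, the right-hand side converges to the value of the integrand at $r=\m$, namely $\mathcal{L}\V(\m,x_0)+\G(\m,x_0)=F^{(\theta)}(x_0)>0$ (all spatial generator terms vanish because $\V(\m,\cdot)\equiv 0$). This contradicts $\V(t,x_0)/(\m-t)\to 0$ and therefore forces $\b(\m-)=0$.

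The main technical obstacle in both parts is to legitimise the pointwise integration of the equation of Lemma \ref{lemma:variationalcharacterization}, which is stated only in a distributional sense. In practice this is handled either by appealing to a local $C^{1,2}$ regularity result for $\V$ inside $C$ (standard for integro-differential parabolic operators under our Lévy assumptions), or by replacing the pointwise PDE with an Itô-type identity in which it is the expectation of bounded functionals of the process that is manipulated, using the Lipschitz continuity of $\V$ from Lemma \ref{lemma:Vcontinuity}.
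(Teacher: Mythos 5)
Your treatment of the limit $\b(t)\to 0$ as $t\uparrow \m$ follows the same route as the paper (both pass to the boundary via the variational equality $\mathcal{A}_{(t,X)}\V+\G=0$ in $C$ and contradict $F^{(\theta)}>0$ at $\m$); your additional probabilistic estimate $|\V(t,x_0)|\leq(\m-t)\P(\inf_{s\leq\m-t}X_s<-x_0/2)=o(\m-t)$ is a nice, self-contained way to avoid the paper's appeal to convergence of distributional derivatives.

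The left-continuity argument, however, which the paper omits by citing \cite{lamberton2008critical}, has a gap beyond the regularity issue you acknowledge. You invoke ``the strict inequality $\varphi^{(\theta)}(t_0,x_0)>0$ having been observed just below \eqref{eq:functionphitdef}'', but the paper establishes strict positivity of $\varphi^{(\theta)}$ only on the \emph{interior} of $D$. Your point $(t_0,x_0)$, with $x_0\in(\b(t_0),\b(t_0-))$, is precisely \emph{not} interior: by the assumed discontinuity, $(s,x_0)\in C$ for every $s<t_0$ sufficiently close, so $(t_0,x_0)$ lies on the lateral boundary of $D$. Continuity of $\varphi^{(\theta)}$ only yields $\varphi^{(\theta)}(t_0,x_0)\geq 0$, and if this limit equals $0$ your displayed identity gives $-\V(s,x_0)/(t_0-s)\to 0$, which is perfectly consistent with $\V(s,x_0)<0$; no contradiction arises. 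Thus strict positivity at that boundary point must be argued separately (this is part of what \cite{lamberton2008critical} supplies). Relatedly, the phrase ``$\V$ vanishes in a neighbourhood of $(t_0,x_0)$'' is false for a space-time neighbourhood — it holds only for a spatial neighbourhood at $t=t_0$ — which is enough to compute $\mathcal{L}\V(t_0,x_0)$ but is not enough, on its own, to justify $\mathcal{L}\V(r,x_0)\to\mathcal{L}\V(t_0,x_0)$ as $r\uparrow t_0$: the drift and diffusion terms involve $\partial_x\V(r,x_0)$ and $\partial_{xx}\V(r,x_0)$ evaluated in $C$, and showing these vanish in the limit is precisely the smooth-fit question, not a consequence of dominated convergence. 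So while the overall strategy matches the cited Lamberton--Mikou approach, the left-continuity part as written is not yet a proof.
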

\begin{proof}
The method proof of the continuity of $\b$ in $[0,\m)$ is heavyly based on the work of \cite{lamberton2008critical} (see Theorem 4.2, where the continuity of the boundary is shown in the American option context) so is omitted. \\

We then show that the limit holds. Define $\b(\m-):=\lim_{t\uparrow \m} \b(t)$. We obtain $\b(\m-)\geq 0$ since $\b(t) \geq \hth(t)\geq 0$ for all $t\in [0,\m)$. The proof is by contradiction so we assume that $\b(\m-)>0$. Note that for all $x \in \R$, we have that $\V(\m,0)=0$ and $\G(\m,x)=F^{(\theta)}(x)$. Moreover, we have that
\begin{align*}
\mathcal{A}_X (\V)+\G=- \partial_t \V \leq 0
\end{align*}
in the sense of distributions on $(0,\m)\times (0,\b(\m-) )$. Hence, by continuity, we can derive, for $t\in [0,\m)$, that $\mathcal{A}_X (\V)(t,\cdot)+\G(t,\cdot) \leq 0 $ on the interval $(0,\b(\m-))$. Hence, by taking $t\uparrow \m$ we obtain that
\begin{align*}
0\geq \lim_{t\uparrow \m} \mathcal{A}_X (\V)(t,\cdot)+\G(t,\cdot)=F^{(\theta)} >0
\end{align*}
in the sense of distributions, where we used the continuity if $\V$ and $\G$, the fact that $\V(\m,x)=0$ for all $x\in \R$ and that $F^{(\theta)}(x)>0$ for all $x>0$. Note that we have got a contradiction and we conclude that $\b(\m)=0$. 
\end{proof}
Define the value 
\begin{align}
\label{eq:definitionoftb}
t_b:=\inf\{t\geq 0: \b(t)\leq 0 \}.
\end{align}
Note that in the case where $X$ is a process of infinite variation, we have that the distribution function of the random variable $-\underline{X}_{\et}$, $F^{(\theta)}$, is continuous on $\R$, strictly increasing and strictly positive in the open set $(0,\infty)$ with $F^{(\theta)}(0)=0$. This fact implies that the inverse function of $F^{(\theta)}$ exists on $(0,\infty)$ and then function $\hth$ can be written for $t\in [0,\m)$ as

\begin{align*}
h^{(\theta)}(t)= (F^{(\theta)})^{-1}\left(  1-\frac{1}{2 } e^{\theta t}\right).
\end{align*}
Hence we conclude that $\hth(t)>0$ for all $t\in [0,\m)$. Therefore, when $X$ is a process of infinite variation, we have $\b(t)>0$ for all $t\in [0,\m)$ and hence $t_b=\m$. For the case of finite variation, we have that $t_b \in [0,\m)$ which implies that $\b(t)=0$ for all $t\in [t_b,\m)$ and $\b(t)>0$ for all $t\in [0,t_b)$. In the next lemma, we characterise its value.

\begin{lemma}
\label{lemma:characterizationoftb}
Let $\theta>0$ and $X$ be a process of finite variation. We have that for all $t\geq 0$ and $x\in \R$,
\begin{align*}
\int_{(-\infty,0)} [\V(t,x+y)-\V(t,x)]\Pi(\dd y)>-\infty.
\end{align*}
Moreover, for any L\'evy process, $t_b$ is given by
\begin{align}
\label{eq:characterisationoftb}
t_b=\inf \left\{ t\in [0,\m]:\int_{(-\infty,0)}\V_B(t,y)\Pi(\dd y) +\G(t,0)\geq 0 \right\},
\end{align}
where $\V_B$ is given by 
\begin{align*}
\V_B(t,y)= \E_{y}(\tau_0^+\wedge (\m-t))-\frac{2}{\theta}e^{-\theta t}[1-\E_{y}(e^{-\theta (\tau_0^+ \wedge (\m-t))})]
\end{align*}
for all $t\in [0,\m)$ and $y \in \R$.

\end{lemma}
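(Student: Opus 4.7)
\textbf{Part 1 (the integrability bound).} This is an immediate consequence of the Lipschitz continuity of $x\mapsto \V(t, x)$ established in Lemma \ref{lemma:Vcontinuity}. Denoting the Lipschitz constant by $L$ and splitting the jump integral at $-1$,
\begin{align*}
\int_{(-\infty,0)}|\V(t,x+y) - \V(t, x)|\Pi(\dd y) \leq L\int_{(-1,0)}|y|\Pi(\dd y) + 2\m\, \Pi((-\infty, -1]),
\end{align*}
where finite variation ensures the first term is finite, the second is finite for any L\'evy measure, and $|\V|\leq \m$ is from Lemma \ref{lemma:basicpropertiesofV}. The one-sided bound $>-\infty$ follows a fortiori.

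\textbf{Part 2 (the characterisation of $t_b$).} Let $t^*$ denote the right-hand side of \eqref{eq:characterisationoftb}. The plan is to prove $t_b = t^*$ in two one-step arguments, preceded by a key identification. For infinite-variation $X$ both sides equal $\m$ (a direct check using $F^{(\theta)}(0)=0$ and $\V_B(\m,\cdot)\equiv 0$), so the substantive case is finite variation.

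\emph{Identification.} For $(t,y)\in [t_b,\m]\times (-\infty,0]$ one has $\V(t,y)=\V_B(t,y)$. Indeed, since $\b$ is non-increasing with $\b(t_b)=0$, $\b\equiv 0$ on $[t_b,\m]$, so the optimal stopping rule of Theorem \ref{thm:solutionoftheOPP} reduces to $\tau_0^+\wedge(\m-t)$; along this interval $X_s+y<0$ forces $\G(t+s,X_s+y)=1-2e^{-\theta(t+s)}$, and explicit integration produces $\V_B(t,y)$.

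\emph{Direction $t_b\geq t^*$.} Fix $t\in [t_b,\m)$. Immediate stopping is optimal at $(t,0)$, so for any small $\eta>0$,
\begin{align*}
0 = \V(t,0) \leq \E_0\!\left[\int_0^\eta \G(t+s,X_s)\dd s + \V(t+\eta, X_\eta)\right].
\end{align*}
Divide by $\eta$, let $\eta\downarrow 0$, and apply an It\^o--Dynkin compensator expansion (justified by the Lipschitz bounds and the integrability from Part 1). The running-cost term tends to $\G(t,0)$; the value-function term tends to $\mathcal{A}_{(t,X)}\V(t,0)$. Using the identification above ($\V\equiv 0$ on $[t_b,\m]\times[0,\infty)$, $\V=\V_B$ on $[t_b,\m]\times(-\infty,0]$), the time and one-sided spatial derivatives at $(t,0)$ vanish and only the jump piece $\int_{(-\infty,0)}\V_B(t,y)\Pi(\dd y)$ survives, yielding the required inequality.

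\emph{Direction $t_b\leq t^*$.} For $t\in [t^*,\m)$ define the candidate $\widetilde V(s,y):=\V_B(s,y)\I_{\{y\leq 0\}}$ on $[t,\m]\times\R$. Dynkin's identity applied to the defining strategy of $\V_B$ gives $\mathcal{A}\widetilde V+\G=0$ on $\{y<0\}$; the inequality defining $t^*$ controls the singular jump contribution across the boundary $y=0$. A standard verification argument then compares $\widetilde V$ with the expected cost of any admissible stopping rule and yields $\V(t,0)\geq \widetilde V(t,0)=0$. Combined with $\V\leq 0$ this forces $\V(t,0)=0$, hence $(t,0)\in D$ and $t\geq t_b$.

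\textbf{Main obstacle.} The principal technical difficulty lies in making the infinitesimal analysis and verification argument rigorous at the kink $y=0$ of $\V$: in the finite-variation case $\G$ itself is discontinuous at $x=0$ and the gradient of $\V$ has different one-sided limits from the continuation and stopping sides, so classical It\^o's formula does not apply directly. This will likely be handled by mollifying $\V$ in $y$ (or truncating the small jumps to reduce to a compound-Poisson approximation and passing to the limit), controlling the error via the small-jump integrability furnished by Part 1 together with the Lipschitz estimate of Lemma \ref{lemma:Vcontinuity}.
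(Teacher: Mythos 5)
Your Part 1 is exactly the paper's argument: the paper also deduces the bound from the Lipschitz continuity of $x\mapsto \V(t,x)$ (Lemma \ref{lemma:Vcontinuity}) for the small jumps, finiteness of $\Pi$ away from the origin together with boundedness of $\V$ for the large ones, and $\int_{(-1,0)}|y|\Pi(\dd y)<\infty$ in the finite variation case. For Part 2, however, you take a genuinely different route. The paper never runs a verification argument here: it starts from the distributional variational inequality of Lemma \ref{lemma:variationalcharacterization}, rewrites the generator in the finite-variation case so that on the continuation region $\int_{(-\infty,0)}[\V(t,x+y)-\V(t,x)]\Pi(\dd y)+\G(t,x)=-\partial_t\V-\delta\,\partial_x\V\le 0$ by monotonicity of $\V$, and then obtains the characterisation purely by limit arguments: letting $x\downarrow 0$ in the strict positivity of $\varphi^{(\theta)}$ on $D$ gives the inequality ``$\ge 0$'' at $t_b$, letting $t\uparrow t_b$ in the displayed inequality gives ``$\le 0$'' there, a separate contradiction using $F^{(\theta)}(0)=\theta/(\delta\Phi(\theta))>0$ shows that $\{t:\b(t)=0\}\neq\emptyset$, and monotonicity in $t$ of $\V_B$ and of $\G(\cdot,0)$ converts the resulting equality at $t_b$ into the infimum representation \eqref{eq:characterisationoftb}. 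You instead prove the two inclusions probabilistically: a small-time expansion of the dynamic programming inequality at $(t,0)$ for $t\ge t_b$ (giving $t^*\le t_b$), and a martingale verification with the explicit candidate $\V_B\,\I_{\{y\le 0\}}$ (giving $t_b\le t^*$). Your scheme has the advantage of being self-contained at the level of the process, of not invoking the distributional machinery of Lemma \ref{lemma:variationalcharacterization} at this point, and of not needing the separate non-emptiness argument for $\{t:\b(t)=0\}$, which simply falls out of the verification direction; the price is the delicate small-time analysis at the kink that the paper's route avoids.

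Two steps should be made explicit beyond the mollification issue you already flag. First, in the verification direction the hypothesis is an inequality at the single spatial point $0$ and, a priori, only at $t^*$, whereas the submartingale property of $\V_B(t+s,X_s)\I_{\{X_s\le 0\}}+\int_0^s\G(t+r,X_r)\dd r$ requires $\int_{(-\infty,-x)}\V_B(s,x+z)\Pi(\dd z)+\G(s,x)\ge 0$ for \emph{all} $x>0$ and all $s\in[t^*,\m)$; this does follow, but only after observing that $y\mapsto\V_B(s,y)$ and $y\mapsto\G(s,y)$ are non-decreasing (so the expression at $x>0$ dominates the one at $x=0$) and that $t\mapsto\int_{(-\infty,0)}\V_B(t,z)\Pi(\dd z)+\G(t,0)$ is non-decreasing (so membership of $t^*$ in the set propagates to later times). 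Second, in the direction $t^*\le t_b$ you need $\limsup_{\eta\downarrow 0}\frac1\eta\E\bigl[\V(t+\eta,X_\eta)\bigr]\le\int_{(-\infty,0)}\V_B(t,y)\Pi(\dd y)$; since the quantity is non-positive, Fatou in the convenient direction is not enough, and you must control uniformly the contribution of small jumps near $0$, e.g.\ via an estimate of the form $|\V_B(t,y)|\le C|y|$ (which holds here, uniformly in $t$) combined with the compound-Poisson truncation you propose. With these two points filled in, your plan yields the statement.
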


\begin{proof}
Assume that $X$ is a process of finite variation. We first show that 
\begin{align*}
\int_{(-\infty,0)} [\V(t,x+y)-\V(t,x)]\Pi(\dd y)>-\infty    
\end{align*}
for all $t\geq 0$ and $x\in \R$. The case $t\geq \m$ is straightforward since $\V(t,x)=0$ for all $x\in \R$. The case $t<\m$ follows from the Lipschitz continuity of the mapping $x\mapsto \V(t,x)$, that $\Pi$ is finite on intervals away from zero and since $\int_{(-1,0)} y\Pi(\dd y)>-\infty$ when $X$ is of finite variation. Moreover, from Lemma \ref{lemma:variationalcharacterization}, we obtain that 
%Assuming that $t\in [0,\m)$, if $x>\b(0)\geq \b(t)$, we have $\V(t,x)=0$ resulting in 
%\begin{align*}
%\int_{(-\infty,0)} [\V(t,x+y)-\V(t,x)]\Pi(\dd y)&=\int_{(-\infty,\b(t)-x)} \V(t,x+y)\Pi(\dd y)\\
%& \geq -(\m-t) \Pi(-\infty,\b(t)-x)\\
%&>-\infty,
%\end{align*}
%where the last equality follows since $|\V|\leq \m-t$ and $\Pi$ is finite on intervals away from zero. 
%Then assume that $t<\m$, we have by equation \eqref{eq:LipschitzcontinuityofVonx} that 
%\begin{align*}
%\int_{(-\infty,0)} &[\V(t,x+y)-\V(t,x)]\Pi(\dd y)\\
%&=\int_{(-1,0)} [\V(t,x+y)-\V(t,x)]\Pi(\dd y)+\int_{(-\infty,-1)} [\V(t,x+y)-\V(t,x)]\Pi(\dd y)\\
%&\geq 2Me^{\Phi(\theta) \b(0)} \int_{(-1,0)} y\Pi(\dd y)- (\m-t)\Pi(-\infty,-1)\\
%&>-\infty,
%%\end{align*}
%where the last quantity is finite since $X$ is of finite variation and then $\int_{(-1,0)} y\Pi(\dd y)>-\infty$. 
\begin{align*}
\int_{(-\infty,0)} [\V(t,x+y)-\V(t,x)] \Pi(\dd y)+\G(t,x)=- \frac{\partial}{\partial t} \V(t,x) -\delta  \frac{\partial}{\partial x} \V(t,x)\leq 0  
\end{align*}
 on $C$ in the sense of distributions, where the last inequality follows since $\V$ is non-decreasing in each argument and $\delta>0$ is defined in \eqref{eq:definitionofdeltafinitevariation}. Then by continuity of the functions $\V$ and $\G$ (recall that $\G$ is at least continuous on $(0,\infty)\times (0,\infty)$ and right-continuous at points of the form $(t,0)$ for $t\geq 0$) we can derive
\begin{align}
\label{eq:VPidy+Gleq0}
\int_{(-\infty,0)} [\V(t,y)-\V(t,0)] \Pi(\dd y)+\G(t,0) \leq 0
\end{align} 
for all $t\in [0,t_b)$. \\

Next, we show that the set $\{t\in [0,\m): \b(t)=0 \}$ is non empty. We proceed by contradiction, assume that $\b(t)>0$ for all $t\in [0,\m)$ so that $t_b=\m$. Taking $t\uparrow \m$ in \eqref{eq:VPidy+Gleq0} and applying dominated convergence theorem, we obtain that 
\begin{align*}
0\geq \lim_{t\uparrow \m }\left\{ \int_{(-\infty,0)} [\V(t,y)-\V(t,0)] \Pi(\dd y)+\G(t,0) \right\}=\G(\m,0)=F^{(\theta)}(0)>0,
\end{align*}

where the strict inequality follows from $F^{(\theta)}(0)=\frac{\theta}{\Phi(\theta)} W^{(\theta)}(0)=\frac{\theta}{\delta \Phi(\theta)}>0$ since $X$ is of finite variation. Therefore, we observe a  contradiction which shows that $\{t\in [0,\m): \b(t)=0 \}\neq \emptyset$. Moreover, by the definition, we have that $t_b=\inf\{ t\in [0,\m): \b(t)=0\}$. \\

Next we find an expression for $\V(t,x)$ when $t\in (0,\m)$ and $x< 0$. Since $\b(t)\geq 0$ for all $t\in [0,\m)$, we have that  
\begin{align}
\label{eq:Vforxnegative}
\V(t,x)&=\E_x\left(\int_0^{\tau_{0}^+ \wedge (\m-t)} (1-2e^{-\theta (t+s)} ) \dd s \right)+\E_x\left(\I_{\{\tau_{0}^+<\m-t \}} \V(t+\tau_0^+,0) \right)\nonumber\\
%&=\E_x(\tau_0^+\wedge (\m-t))-\frac{2}{\theta}e^{-\theta t}[1-\E_x(e^{-\theta (\tau_0^+ \wedge (\m-t))})]+\E_x\left(\I_{\{\tau_{0}^+<\m-t \}} \V(t+\tau_0^+,0) \right) \nonumber \\
&=\V_B(t,x)+\E_x\left(\I_{\{\tau_{0}^+<\m-t \}} \V(t+\tau_0^+,0) \right) ,
\end{align}
where the first equality follows since $X_s\leq 0$ for all $s\leq \tau_0^+$ and $G(t,x)=1-2e^{-\theta t}$ for all $x<0$. Hence, in particular, we have that $\V(t,x)=\V_B(t,x)$ for all $t\in [t_b,\m)$ and $x\in \R$.\\

We show that \eqref{eq:characterisationoftb} holds. From the discussion after Lemma \ref{lemma:variationalcharacterization}, we know that
\begin{align*}
\varphi^{(\theta)}(t,x)=\int_{(-\infty,0)} \V(t,x+y)\Pi (\dd y)+\G(t,x) > 0 
\end{align*}
for all $x>0$ and $t\geq t_b$. Then by taking $x\downarrow 0$, making use of the right continuity of $x\mapsto G(t,x)$, continuity of $\V$ (see Lemma \ref{lemma:Vcontinuity}) and applying dominated convergence theorem, we derive that
\begin{align*}
\int_{(-\infty,0)} \V(t_b,y)\Pi (\dd y)+\G(t_b,0) \geq 0.
\end{align*}
In particular, if $t_b=0$, \eqref{eq:characterisationoftb} holds since $t\mapsto \V(t,y)$ (for all $y\in \R$) and $\G(t,0)$ are non-decreasing functions. If $t_b>0$, taking $t\uparrow t_b$ in \eqref{eq:VPidy+Gleq0} gives us
\begin{align*}
\int_{(-\infty,0)} \V(t_b,y)\Pi(\dd y)+\G(t,0)\leq 0.
\end{align*}
Hence, we have that $\int_{(-\infty,0)}\V_B(t_b,y)\Pi(\dd y)+\G(t_b,0)=0$ with \eqref{eq:characterisationoftb} becoming clear due to the fact that $t\mapsto \V_B(t,x)$ is non-decreasing. If $X$ is a process of infinite variation, we have that $\hth(t)>0$ for all $t\in [0,\m)$ and therefore $\G(t,x)< 0$ for all $t\in [0,\m)$ and $x\leq 0$ which implies that 
\begin{align*}
t_b=\m= \inf \left\{ t\in [0,\m]:\int_{(-\infty,0)}\V_B(t,y)\Pi(\dd y) +\G(t,0)\geq 0 \right\}. 
\end{align*}
\end{proof}

Now we prove that the derivatives of $\V$ exist at the boundary $\b$ for those points in which $\b$ is strictly positive.

\begin{lemma}
For all $t\in [0,t_b)$, the partial derivatives of $\V(t,x)$ at the point $(t,\b(t))$ exist and are equal to zero, i.e.,
\begin{align*}
\frac{\partial}{\partial t} \V(t,\b(t))=0 \qquad \text{and} \qquad \frac{\partial}{\partial x} \V(t,\b(t))=0.
\end{align*}

\end{lemma}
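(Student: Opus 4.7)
The plan is to establish the smooth-fit principle at the boundary point $(t, \b(t))$ for each $t \in [0, t_b)$, by checking separately the right and left partial derivatives in each variable. For the right derivatives, observe that $\V$ vanishes on the closed stopping region $D = \{(s, y) \in \R_+ \times \R : y \geq \b(s)\}$, and since $\b$ is non-increasing, both $(t, \b(t) + \varepsilon)$ and $(t + \delta, \b(t))$ lie in $D$ for every $\varepsilon, \delta > 0$. Together with continuity of $\V$ (which gives $\V(t, \b(t)) = 0$), this immediately yields $\partial_x \V(t, \b(t)+) = 0$ and $\partial_t \V(t, \b(t)+) = 0$.

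For the left derivative in $t$, let $\sigma_\delta^\ast$ be an optimal stopping time for $\V(t - \delta, \b(t))$. Monotonicity of $\V$ in $t$ gives $\V(t, \b(t)) - \V(t - \delta, \b(t)) \geq 0$. For the upper bound, using $\sigma_\delta^\ast$ suboptimally at $(t, \b(t))$ together with the identity
\[
\G(s + t, x) - \G(s + t - \delta, x) = 2 [e^{-\theta(s+t)} - e^{-\theta(s+t-\delta)}][F^{(\theta)}(x) - 1],
\]
and the bounds $|F^{(\theta)}(x) - 1| \leq 1$ and $|e^{-\theta(s+t)} - e^{-\theta(s+t-\delta)}| \leq \theta \delta$, produces
\[
0 \leq \V(t, \b(t)) - \V(t - \delta, \b(t)) \leq 2 \theta \delta \, \E[\sigma_\delta^\ast].
\]
Since $\b$ is continuous at $t$, $h_\delta := \b(t - \delta) - \b(t) \downarrow 0$ as $\delta \downarrow 0$, and because $\sigma_\delta^\ast \leq \tau_{h_\delta}^+$ while $0$ is regular for $(0, \infty)$ under $X$ (as $X$ is not a compound Poisson process), bounded convergence gives $\E[\sigma_\delta^\ast] \to 0$. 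Dividing by $\delta$ produces $\partial_t \V(t, \b(t)-) = 0$.

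For the left derivative in $x$, let $\tau_\varepsilon^\ast$ be optimal for $\V(t, \b(t) - \varepsilon)$. The analogous sub-optimality argument gives
\[
0 \leq \V(t, \b(t)) - \V(t, \b(t) - \varepsilon) \leq 2 e^{-\theta t} \E\left[\int_0^{\tau_\varepsilon^\ast} e^{-\theta s} [F^{(\theta)}(X_s + \b(t)) - F^{(\theta)}(X_s + \b(t) - \varepsilon)] \dd s\right].
\]
I would then bound $\tau_\varepsilon^\ast \leq \tau_\varepsilon^+$ (using monotonicity of $\b$), apply Fubini's theorem, and invoke the potential density \eqref{eq:qpotentialmeasurekilledonexita} for $X$ killed upon exiting $(-\infty, \varepsilon)$ to convert the right-hand side into a deterministic integral against $e^{-\Phi(\theta) \varepsilon} W^{(\theta)}(\varepsilon - z) - W^{(\theta)}(-z)$. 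Splitting at $z = 0$: the contribution from $z \in (0, \varepsilon]$ is $O(\varepsilon^2)$ by continuity of $F^{(\theta)}$ at $\b(t) > 0$ together with $W^{(\theta)}(-z) = 0$ there, while for $z \leq 0$ a dominated convergence argument, exploiting boundedness of $u \mapsto e^{-\Phi(\theta) u} W^{(\theta)}(u)$ (see \cite{kyprianou2011theory}), yields the bound $o(\varepsilon)$. After division by $\varepsilon$, this gives $\partial_x \V(t, \b(t)-) = 0$.

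The hard part is the $o(\varepsilon)$ rate in the $x$-direction, since $W^{(\theta)}$ need not be differentiable for general spectrally negative L\'evy processes: a direct Taylor expansion is unavailable, and the potential-density route combined with the sharp asymptotics of $e^{-\Phi(\theta)\cdot} W^{(\theta)}$ is what makes the estimate tractable. Combining all four one-sided derivatives at $(t, \b(t))$ then gives existence of both partial derivatives and the value zero.
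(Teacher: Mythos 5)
Your proposal is correct and takes essentially the same route as the paper's proof: for the time derivative you bound the difference quotient by the expectation of the optimal stopping time of $\V(t-\delta,\cdot)$ started at $\b(t)$, dominate it by the first passage time over $\b(t-\delta)$, and let continuity of $\b$ plus regularity of $0$ for $(0,\infty)$ drive this to zero; for the spatial derivative you pass to the potential density \eqref{eq:qpotentialmeasurekilledonexita}, split the integral, and use the boundedness of $u\mapsto e^{-\Phi(\theta)u}W^{(\theta)}(u)$ together with the existence of one-sided derivatives of $W^{(\theta)}$ and dominated convergence. The only cosmetic difference is that the paper organises the spatial estimate into three terms $R_1^{(\varepsilon)}$, $R_2^{(\varepsilon)}$, $R_3^{(\varepsilon)}$ rather than splitting at a single point, but the estimates invoked are the same.
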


\begin{proof}
First, we prove that the assertion in the first argument. Using a similar idea as in Lemma \ref{lemma:Vcontinuity}, we have that for any $t<t_b$, $x\in \R$ and $h>0$,
\begin{align*}
0\leq  \frac{ \V(t,\b(t))- \V(t-h,\b(t)) }{h}& \leq  2 \E_{\b(t)}\left( \int_{0}^{\tau^*_{h} } \frac{[e^{-\theta(r+t-h)}-e^{-\theta(r+t)}]}{h}  \dd r\right)\\
& \leq 2 \E_{\b(t)}\left( \int_{0}^{\tau^+_{\b(t-h)} } \frac{[e^{-\theta(r+t-h)}-e^{-\theta(r+t)}]}{h}  \dd r\right)\\
%&= \frac{[e^{-\theta(t-h)}-e^{-\theta t}]}{h} \E_{\b(t)}\left( \int_{0}^{\tau^+_{\b(t-h)} }e^{-\theta r}  \dd r\right)\\
&= \frac{[e^{-\theta(t-h)}-e^{-\theta t}]}{h} \frac{1}{\theta} \E_{\b(t)}\left( 1-e^{-\theta \tau^+_{\b(t-h)}}  \right)
\end{align*}
where $\tau^*_h=\inf\{r \in [0,\m-t+h]: X_r  \geq \b(r+t-h) \}$ is the optimal stopping time for $\V(t-h,x)$ and the second inequality follows since $b$ is non increasing. Hence, by \eqref{eq:laplacetransformtau0+} and continuity of $\b$ we obtain that
\begin{align*}
 \lim_{h\downarrow 0}  \frac{ \V(t,\b(t))- \V(t-h,\b(t)) }{h}= 0.
\end{align*}
Now we show that the partial derivative of the second argument exists at $\b(t)$ and is equal to zero. Fix any time $t \in [0,t_b)$, $\varepsilon>0$ and $x\leq b^{(\theta)}(t)$ (without loss of generality, we assume that $\varepsilon<x$).  By a similar argument that as provided in Lemma \ref{lemma:Vcontinuity}, we obtain that 

\begin{align*}
V^{(\theta)}&(t,x)-V^{(\theta)}(t,x-\varepsilon)\\
& \leq 2   \int_{-\infty}^{\b(t)} [F^{(\theta)}(z+\varepsilon)-F^{(\theta)}(z)] \left[e^{-\Phi(\theta)(\b(t)-x+\varepsilon)} W^{(\theta)}(\b(t)-z)-W^{(\theta)}(x-\varepsilon-z) \right]\dd z\\
& =  2 e^{-\Phi(\theta)(\b(t)-x+\varepsilon)}  \int_{x-\varepsilon}^{\b(t)} [F^{(\theta)}(z+\varepsilon)-F^{(\theta)}(z)]  W^{(\theta)}(\b(t)-z)\dd z\\
&\qquad + 2  \int_{0}^{x-\varepsilon} [F^{(\theta)}(z+\varepsilon)-F^{(\theta)}(z)] \left[e^{-\Phi(\theta)(\b(t)-x+\varepsilon)} W^{(\theta)}(\b(t)-z)-W^{(\theta)}(x-\varepsilon-z) \right]\dd z\\
&\qquad +2  \int_{-\varepsilon}^{0} F^{(\theta)}(z+\varepsilon) \left[e^{-\Phi(\theta)(\b(t)-x+\varepsilon)} W^{(\theta)}(\b(t)-z)-W^{(\theta)}(x-\varepsilon-z) \right]\dd z .
\end{align*}
Dividing by $\varepsilon$, we have that for $t\in [0,\m)$ and $\varepsilon<x $ that 
\begin{align*}
0\leq \frac{  V^{(\theta)}(t,x)-V^{(\theta)}(t,x-\varepsilon)}{\varepsilon}\leq R_1^{(\varepsilon)}(t,x)+R_2^{(\varepsilon)}(t,x)+R_3^{(\varepsilon)}(t,x),
\end{align*}
where 
\begin{align*}
 R_1^{(\varepsilon)}(t,x)&=   2 e^{-\Phi(\theta)(\b(t)-x+\varepsilon)} \frac{1}{\varepsilon} \int_{x-\varepsilon}^{\b(t)} [F^{(\theta)}(z+\varepsilon)-F^{(\theta)}(z)]  W^{(\theta)}(\b(t)-z)\dd z,\\
R_2^{(\varepsilon)}(t,x)&=  2\frac{1}{\varepsilon}  \int_{0}^{x-\varepsilon} [F^{(\theta)}(z+\varepsilon)-F^{(\theta)}(z)] \left[e^{-\Phi(\theta)(\b(t)-x+\varepsilon)} W^{(\theta)}(\b(t)-z)-W^{(\theta)}(x-\varepsilon-z) \right]\dd z,\\
R_3^{(\varepsilon)}(t,x)&= 2  \frac{1}{\varepsilon}\int_{-\varepsilon}^{0} F^{(\theta)}(z+\varepsilon) \left[e^{-\Phi(\theta)(\b(t)-x+\varepsilon)} W^{(\theta)}(\b(t)-z)-W^{(\theta)}(x-\varepsilon-z) \right]\dd z .
\end{align*}
By using that $W$ and $F$ are non-decreasing, that $W^{(\theta)}$ (and hence $F^{(\theta)}$) has left and right derivatives and the dominated convergence theorem we can show that for $t\in [0,t_b)$, $\lim_{\varepsilon \downarrow 0} R_i^{(\varepsilon)}(t,\b(t))=0$ for each $i=1,2, 3$. Hence, we have that
\begin{align*}
\lim_{\varepsilon \downarrow 0} \frac{V^{(\theta)}(t,\b(t))-V^{(\theta)}(t,\b(t)-\varepsilon)}{\varepsilon} = 0
\end{align*}
proving that $x \mapsto V^{(\theta)}(x,t)$ is differentiable at $b^{(\theta)}(t)$ with $ \partial/\partial x V^{(\theta)}(t,b^{(\theta)}(t))=0$ for $t\in [0,t_b)$.

\end{proof}
The next theorem looks at how the value function $\V$ and the curve $\b$ can be characterised as a solution of non-linear integral equations within a certain family of functions. These equations are in fact generalisations of the free boundary equation (see e.g. \cite{peskir2006optimal} Section 14.1 in a diffusion setting) in the presence of jumps. It is important to mention that the proof of Theorem \ref{thm:Vsatisfiesequation} is mainly inspired by the ideas of \cite{du2008predicting} with some extensions to allow for the presence of jumps. 

\begin{thm}
\label{thm:Vsatisfiesequation}
Let $X$ be a spectrally negative L\'evy process and let $t_b$ be as characterised in \eqref{eq:characterisationoftb}. For all $t\in [0,t_b)$ and $x\in \R$, we have that 

\begin{align}
\label{eq:characterizationofV}
\V(t,x)&=\E_x\left( \int_0^{\m-t	} \G(r+t,X_r)\I_{\{X_r <\b(r+t) \}}\dd r \right)\nonumber \\
&\qquad-\E_x\left( \int_0^{\m-t} \int_{(-\infty,\b(r+t)-X_r)} \V(r+t,X_r+y)\Pi(\dd y) \I_{\{X_r> \b(r+t) \}}\dd r \right)
\end{align}
and $\b(t)$ solves the equation 
\begin{align}
\label{eq:characterizationofb}
\E_{\b(t)}&\left( \int_0^{\m-t	} \G(r+t,X_r)\I_{\{X_r <\b(r+t) \}}\dd r \right)\nonumber \\
&\qquad-\E_{\b(t)}\left( \int_0^{\m-t} \int_{(-\infty,\b(r+t)-X_r)} \V(r+t,X_r+y)\Pi(\dd y) \I_{\{X_r> \b(r+t) \}}\dd r \right)=0.
\end{align}
If $t\in [t_b,\m)$, we have that $\b(t)=0$ and 
\begin{align}
\label{eq:Vaftertb}
\V(t,x)= \E_x(\tau_0^+\wedge (\m-t))-\frac{2}{\theta}e^{-\theta t}[1-\E_x(e^{-\theta (\tau_0^+ \wedge (\m-t))})] 
\end{align} 
for all $x\in \R$. Moreover, the pair $(\V,\b)$ is uniquely characterised as the solutions to equations \eqref{eq:characterizationofV}-\eqref{eq:Vaftertb} in the class of continuous functions in $\R_+\times\R$ and $\R_+$, respectively, such that $\b\geq \hth$, $\V\leq 0$ and $\int_{(-\infty,0)} \V(t,x+y)\Pi(\dd y) +\G(t,x) \geq 0$ for all $t\in [0,t_b)$ and $x\geq \b(t)$.
\end{thm}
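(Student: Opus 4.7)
My plan splits the proof into \emph{existence} -- showing $(\V,\b)$ satisfies \eqref{eq:characterizationofV}--\eqref{eq:Vaftertb} -- and \emph{uniqueness} within the prescribed class. For the existence part at $t\in[0,t_b)$, I would apply an It\^o-type change-of-variable formula to $s\mapsto \V(t+s,X_s)$ on $[0,\m-t]$; the Lipschitz regularity in each variable from Lemma \ref{lemma:Vcontinuity} together with the smooth-fit conditions $\partial_t\V(t,\b(t))=\partial_x\V(t,\b(t))=0$ already in hand should suffice. Two local identities drive the computation: the variational equality $\mathcal{A}_{(t,X)}\V+\G=0$ on the continuation set $C$ from Lemma \ref{lemma:variationalcharacterization}, and on the interior of the stopping set $D$ the reduction $\mathcal{A}_{(t,X)}\V(t,x)=\int_{(-\infty,\b(t)-x)}\V(t,x+y)\,\Pi(\dd y)$, which holds because $\V\equiv 0$ there together with its first and second partial derivatives. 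Using $\V(\m,\cdot)\equiv 0$ to kill the terminal term, taking expectations, and splitting the resulting drift term according to whether $X_r$ lies in $C$ or in $D$ would yield \eqref{eq:characterizationofV}; evaluating at $x=\b(t)$ gives \eqref{eq:characterizationofb}. For $t\in[t_b,\m)$ the continuation set reduces to $\{x<0\}$ and $\tau_D=\tau_0^+\wedge(\m-t)$, so a direct computation of $\E_x[\int_0^{\tau_0^+\wedge(\m-t)}(1-2e^{-\theta(t+r)})\,\dd r]$ gives \eqref{eq:Vaftertb}.

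For uniqueness, suppose $(\tilde{V},\tilde{b})$ satisfies the same system under the stated constraints, and set
\[
\tilde{\Lambda}(u,z):=\G(u,z)\,\I_{\{z<\tilde{b}(u)\}}-\I_{\{z>\tilde{b}(u)\}}\int_{(-\infty,\tilde{b}(u)-z)}\tilde{V}(u,z+y)\,\Pi(\dd y),
\]
so that the integral equation reads $\tilde{V}(t,x)=\E_x[\int_0^{\m-t}\tilde{\Lambda}(t+r,X_r)\,\dd r]$. The Markov property then turns $\tilde{M}_s:=\tilde{V}(t+s,X_s)+\int_0^s\tilde{\Lambda}(t+r,X_r)\,\dd r$ into a bounded martingale on $[0,\m-t]$. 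Applying optional sampling at $\tilde{\tau}:=\inf\{r\geq 0:X_r\geq\tilde{b}(t+r)\}\wedge(\m-t)$, using that spectral negativity forces $X$ to creep up so that $X_{\tilde{\tau}}=\tilde{b}(t+\tilde{\tau})$ on $\{\tilde{\tau}<\m-t\}$ and that \eqref{eq:characterizationofb} evaluated at $\tilde{b}$ gives $\tilde{V}(t+\tilde{\tau},X_{\tilde{\tau}})=0$, will yield $\tilde{V}(t,x)=\E_x[\int_0^{\tilde{\tau}}\G(t+r,X_r)\,\dd r]\geq\V(t,x)$. Conversely, the side constraint $\int_{(-\infty,0)}\tilde{V}(u,z+y)\,\Pi(\dd y)+\G(u,z)\geq 0$ for $z\geq\tilde{b}(u)$, combined with $\tilde{V}\leq 0$ absorbing the slice $[\tilde{b}(u)-z,0)$, will give $\tilde{\Lambda}\leq\G$ everywhere; optional sampling of $\tilde{M}$ at $\tau_D\wedge(\m-t)$ with $\tilde{V}\leq 0$ at the endpoint then delivers $\tilde{V}(t,x)\leq\V(t,x)$ by optimality of $\tau_D$ for $\V$. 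Combining both inequalities yields $\tilde{V}=\V$. Then $\tilde{b}=\b$ follows because $\tilde{V}(t,\tilde{b}(t))=0=\V(t,\tilde{b}(t))$ forces $\tilde{b}\geq\b$, while strict inequality anywhere would, after subtracting the two integral equations, produce a strictly positive integrand on the open set $\{\b(u)<z<\tilde{b}(u)\}$ (since $\varphi^{(\theta)}>0$ strictly on the interior of $D$, as noted after Lemma \ref{lemma:variationalcharacterization}), contradicting the identity.

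The principal obstacle is the rigorous application of the change-of-variable formula in the existence step: $\V$ is only globally Lipschitz, not $C^{1,2}$, and the argument must simultaneously accommodate the free boundary $\b$ (where the second derivative may fail to exist) and the L\'evy jump integral. A Peskir-style formula for semimartingales with only one-sided jumps, or equivalently a mollification of $\V$ followed by a careful passage to the limit anchored on the smooth-fit identities, appears to be required. A secondary technical point is the bookkeeping of integration ranges in the uniqueness step, where the sign conditions $\tilde{V}\leq 0$ and $\int\tilde{V}(\cdot+y)\,\Pi(\dd y)+\G\geq 0$ must be combined carefully to cover the gap between the shrunken range $(-\infty,\tilde{b}(u)-z)$ and the full range $(-\infty,0)$.
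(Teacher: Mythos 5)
Your high-level plan mirrors the paper's: existence via an It\^o/mollification argument using the generator identities in $C$ and the interior of $D$, and uniqueness via optional sampling of the martingale built from the Markov property and the integral equation. The uniqueness half is essentially a repackaging of the paper's strategy (it shows $\U\geq\V$ by stopping on $\ct$, then $\U\leq\V$ via a sign comparison, then coincidence of the boundaries by contradiction using the strict positivity of $\int\V(\cdot+y)\Pi(\dd y)+\G$ on the stopping region). Your $\tilde\Lambda\leq\G$ bound is a clean way to organize the $\tilde V\leq\V$ direction, and your observation that $\tilde V\leq 0$ ``absorbs the slice'' $[\tilde b(u)-z,0)$ is correct.

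However, there are two genuine gaps.

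First, in the existence step your mollification/It\^o argument implicitly assumes that the boundary is a null set for the law of $X$, i.e.\ $\P(X_s+x=\b(t+s))=0$, which is what allows the drift term from the convolution $\G*\rho_n$ to converge to the indicator decomposition. This holds for infinite variation but can fail for finite variation processes. The paper is explicit about this: it runs the mollification argument only when $X$ has infinite variation and then gives an entirely separate proof for the finite variation case, based on the auxiliary function $R^{(\theta)}$ and a bookkeeping of the successive crossing times $\tau_b^{(k)},\sigma_b^{(k)}$ of the curve $\b$, together with the compensation formula and the fact that $\tau_b^{(n)}\uparrow\m-t$. Your proposal does not address finite variation at all, and the smooth-fit anchor you invoke is not what resolves the difficulty (the paper never uses smooth fit in this lemma).

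Second, in the uniqueness step, optional sampling of $\tilde M$ at $\tilde\tau$ only gives $\tilde V\geq\V$ on $\{x\leq\tilde b(t)\}$ (for $x>\tilde b(t)$ one has $\tilde\tau=0$ and the identity is vacuous), so combined with $\tilde V\leq\V$ you only get $\tilde V=\V$ below the candidate curve. To conclude $\tilde V=\V$ on all of $\R_+\times\R$ after you show $\tilde b=\b$, you still need to know $\tilde V$ vanishes on $D_c=\{(t,x):x\geq\tilde b(t)\}$; this is not a corollary of $\tilde V\leq\V\leq 0$. The paper establishes this separately (its Lemma~\ref{lemma:UvanishesatDc}) by stopping at $\sigma_c=\inf\{s:X_s<\ct(t+s)\}$ and showing, via the compensation formula for the Poisson random measure of jumps, that the boundary term $\E_x[\U(t+\sigma_c,X_{\sigma_c})\I_{\{X_{\sigma_c}<\ct(t+\sigma_c)\}}]$ exactly cancels the jump-integral drift $\E_x[\int_0^{\sigma_c}\int_{(-\infty,\ct-X)}\U\,\Pi(\dd y)\dd r]$. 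Your sketch omits this step; you should insert it before, or in place of part of, your $\tilde b\geq\b$ argument, and use it when you finally assert $\tilde V=\V$ everywhere.
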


\subsection{Proof of Theorem \ref{thm:Vsatisfiesequation}}
\label{sec:proof}

Since the proof of Theorem \ref{thm:Vsatisfiesequation} is rather long, we split it into a series of Lemmas. This subsection is entirely dedicated for this purpose. With the help of It\^o formula and following an analogous argument as in \cite{Lamberton2013} (in the infinite variation case), we prove that $\V$ and $\b$ are solutions to the integral equations listed above. The finite variation case is proved using an argument that considers the consecutive times in which $X$ hits the curve $\b$.

\begin{lemma}
The pair $(\V,\b)$ are solutions to the equations \eqref{eq:characterizationofV}-\eqref{eq:Vaftertb}.
\end{lemma}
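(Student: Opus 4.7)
The plan is to split into the easy regime $t \in [t_b, \m)$, where the curve has already collapsed to zero, and the main regime $t \in [0, t_b)$ where a true It\^o-type calculation is needed. For $t \in [t_b, \m)$ we know $\b \equiv 0$, so when $x \geq 0$ the point $(t,x)$ already lies in $D$, giving $\V(t,x)=0$ and $\tau_0^+ = 0$ under $\P_x$, and both sides of \eqref{eq:Vaftertb} vanish. For $x<0$ I would re-use the decomposition \eqref{eq:Vforxnegative} from the proof of Lemma \ref{lemma:characterizationoftb}: since $t + \tau_0^+ \geq t_b$ on $\{\tau_0^+ < \m - t\}$ and $\b$ vanishes on $[t_b, \m)$, the terminal value $\V(t+\tau_0^+, 0)$ equals zero, so \eqref{eq:Vforxnegative} reduces to $\V(t,x) = \V_B(t,x)$, which is exactly \eqref{eq:Vaftertb}.

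For $t \in [0, t_b)$ the strategy is to apply It\^o's formula to the process $s \mapsto \V(t+s, X_s)$ on the interval $[0, \m - t]$, take $\P_x$-expectations, and use the terminal identity $\V(\m, \cdot) \equiv 0$. Pretending $\V$ lies in $C_b^{1,2}$, this would give
\begin{align*}
\V(t,x) \;=\; -\E_x\!\left[\int_0^{\m-t} \mathcal{A}_{(t,X)} \V(t+r, X_r)\,\dd r\right],
\end{align*}
and the integrand can be split according to the two regions. On the continuation set $\{X_r < \b(t+r)\}$, Lemma \ref{lemma:variationalcharacterization} identifies $\mathcal{A}_{(t,X)} \V$ with $-\G$. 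On the interior of the stopping set $\{X_r > \b(t+r)\}$, the identity $\V \equiv 0$ forces every classical partial derivative of $\V$ to vanish at $(t+r, X_r)$, collapsing the generator \eqref{eq:generatorofX} to the pure jump term
\begin{align*}
\mathcal{A}_{(t,X)} \V(t+r, X_r) \;=\; \int_{(-\infty,\, \b(t+r)-X_r)} \V(t+r, X_r + y)\,\Pi(\dd y),
\end{align*}
since the integrand vanishes unless $X_r + y < \b(t+r)$. Combining the two pieces (and noting that $\{r : X_r = \b(t+r)\}$ has zero Lebesgue measure by a standard occupation-time argument based on the continuity of $\b$) yields \eqref{eq:characterizationofV}. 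Equation \eqref{eq:characterizationofb} is then obtained by setting $x = \b(t)$ in \eqref{eq:characterizationofV} and using $\V(t, \b(t)) = 0$, which follows from the continuity of $\V$ and the definition of $\b$ as the infimum of the stopping region.

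The main obstacle is that $\V$ is only Lipschitz (Lemma \ref{lemma:Vcontinuity}), so classical It\^o does not apply directly. In the infinite variation case, following the strategy of \cite{Lamberton2013}, I would convolve $\V$ with a smooth kernel to obtain $\V_n \in C^{1,2}$, apply It\^o to $\V_n(t+s, X_s)$, and pass to the limit; the presence of the Brownian component together with the Lipschitz control from the proof of Lemma \ref{lemma:Vcontinuity} and the distributional identity $\mathcal{A}_{(t,X)} \V + \G = 0$ on $C$ from Lemma \ref{lemma:variationalcharacterization} make this limit tractable. The finite variation case is more delicate because there is no Brownian motion to smooth second-order terms, so instead I would argue pathwise by decomposing $[0, \m - t]$ into the excursions of $X$ between consecutive hitting times of the curve $\b$, applying the strong Markov property at each hit, and using the explicit representation \eqref{eq:Vforxnegative} on each excursion below the curve to assemble the two integral terms in \eqref{eq:characterizationofV}.
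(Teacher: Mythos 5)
Your proposal is essentially the same as the paper's proof: the same splitting into $t\in[t_b,\m)$ versus $t\in[0,t_b)$, the same mollification-plus-It\^o strategy \`a la Lamberton--Mikou for the infinite variation case, and the same excursion decomposition (alternating hits of the curve, strong Markov at each hit) for the finite variation case. You also correctly identify the central technical obstacle -- $\V$ is only Lipschitz -- and the right way around it (convolving with a smoothing kernel and using the distributional identity from Lemma \ref{lemma:variationalcharacterization}).

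One point is slightly off in the finite variation discussion. You say you would use ``the explicit representation \eqref{eq:Vforxnegative} on each excursion below the curve,'' but \eqref{eq:Vforxnegative} is only valid for $x<0$, whereas an excursion below $\b$ ranges over $(-\infty,\b(t+r))$, which for $t<t_b$ includes positive values. What the paper actually does below the curve is invoke the optimality of the first entry time into $D$ for $\V$ (so $\E_x\bigl(\int_0^{\tau_b^{(1)}}\G\,\dd r\bigr)=\V(t,x)$ once one knows the candidate $R^{(\theta)}$ vanishes on $D$), together with the compensation formula for Poisson random measures to handle the over- and undershoots of $\b$. Replacing your appeal to \eqref{eq:Vforxnegative} by these two ingredients makes the finite variation half of the argument rigorous; the rest of your sketch lines up with the paper.
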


\begin{proof}
Recall from Lemma \ref{lemma:characterizationoftb} that, when $t_b<\m$, the value function $\V(t,x)$ satisfies equation \eqref{eq:Vaftertb} for $t\in [t_b,\m)$ and $x\in \R$. We also have that equation \eqref{eq:characterizationofb} follows from \eqref{eq:characterizationofV} by letting $x=\b(t)$ and using that $\V(t,\b(t))=0$.\\

We proceed to show that $(\V,\b)$ solves equation \eqref{eq:characterizationofV}. First, we assume that $X$ is a process of infinite variation. We follow an analogous argument as \cite{Lamberton2013} (see Theorem 3.2). Consider a regularized sequence $\{\rho_n \}_{n\geq 1}$ of non-negative $C^{\infty}(\R_+\times \R)$ functions with support in $[-1/n,0]\times [-1/n,0]$ such that $\int_{-\infty}^0 \int_{-\infty}^0 \rho_n(s,y)\dd s \dd y=1$. For every $n\geq 1$, define the function $\V_n$ by 

\begin{align*}
\V_n(t,x)=(\V \ast \rho_n)(t,x)=\int_{-\infty}^0 \int_{-\infty}^0 \V(t+s,x+y)\rho_n(s,y) \dd s \dd y.
\end{align*}   
for any $(t,x)\in [1/n,\infty)\times \R$. Then for each $n\geq 1$, the function $\V_n$ is a $C^{1,2}(\R_+\times \R)$ bounded function (since $\V$ is bounded). Moreover, it can be shown that $ \V_n \uparrow V$ on $\R_+\times \R$ when $n\rightarrow \infty$ and that (see \cite{lamberton2008critical}, proof of Proposition 2.5), 
\begin{align}
\label{eq:generatorappliedtoVrhon}
\frac{\partial}{\partial t}\V_n(t,x)+\mathcal{A}_{X}(\V_n)(t,x) =-(\G*\rho_n)(u,x) \qquad \text{for all } (t,x) \in [1/n,\infty)\times\R \cap C,
\end{align}
where $\mathcal{A}_{X}$ is the infinitesimal generator of $X$ given in \eqref{eq:generatorofX} and $C=\R_+\times \R \setminus D$. Let $t\in (0,t_b]$, $m>0$ such that $t>1/m$ and $x\in \R$. Applying It\^o formula to $\V_n(t+s,X_{s}+x )$, for $s\in[0,\m-t]$, we obtain that for any $n\geq m$,
\begin{align*}
\V_n(s+t, X_{s}+x) 
&= \V_n(t,x)+ M_{s}^{t,n}+\int_0^{s } \left[ \frac{\partial}{\partial t} \V_n(r+t,X_r+x) + \mathcal{A}_{X}( \V_n)(r+t,X_r+x)\right]\dd r,
\end{align*}
where $\{ M_{s}^{t,n}, t\geq 0 \}$ is a zero mean martingale.
%\begin{align*}
%M_s^{t,n}&=\int_0^s \int_{(-\infty,0)}[\V_n(r+t,X_{r-}+x+y)-\V_n(r+t,X_{r-}+x)][N(\dd y,\dd r) -\Pi(\dd y)\dd r]\\
%&\qquad +\sigma \int_0^s \frac{\partial}{\partial x} \V_n(r+t,X_{r^-}+x)\dd B_r.
%\end{align*}
Hence, taking expectation and using \eqref{eq:generatorappliedtoVrhon}, we derive that
\begin{align*}
\E(\V_n(s+t, X_{s}+x)) 
&= \V_n(t,x)+\E\left(\int_0^{s } \left[ \frac{\partial}{\partial t} \V_n(r+t,X_r+x) + \mathcal{A}_{X}( \V_n)(r+t,X_r+x)\right]\dd r \right)\\
&= \V_n(t,x)-\E\left(\int_0^{s } (\G*\rho_n)(r+t,X_r+x)\I_{\{X_r <\b(r+t) \}}\dd r \right)\\
&\qquad + \E\left(\int_0^{s }  \int_{(-\infty,0)} \V_n (r+t,X_r+x+y) \Pi(\dd y)\I_{\{X_r >\b(r+t) \}}\dd r \right),
\end{align*}
where we used the fact that $\b(s)$ is finite for all $s\geq 0$ and that $\P(X_s+x=b(t+s))=0$ for all $s>0$ and $x\in \R$ when $X$ is of infinite variation (see \cite{sato1999levy}). Taking $s=\m-t$, using the fact that $\V(\m,x)=0$ for all $x\in \R$ and letting $n\rightarrow \infty$ (by the dominated convergence theorem), we obtain that \eqref{eq:characterizationofV} holds for any $(t,x)\in (0,t_b)\times\R$. The case when $t=0$ follows by continuity.\\

For the finite variation case, we define the auxiliary function
\begin{align*}
R^{(\theta)}(t,x)&=\E_x\left( \int_0^{\m-t	} \G(r+t,X_r)\I_{\{X_r <\b(r+t) \}}\dd r \right)\nonumber \\
&\qquad-\E_x\left( \int_0^{\m-t} \int_{(-\infty,0)} \V(r+t,X_r+y)\Pi(\dd y) \I_{\{X_r>\b(r+t) \}}\dd r \right)
\end{align*}
for all $(t,x)\in \R_+\times \R$. We then prove that $R^{(\theta)}=\V$. First, note that from the discussion after Lemma \ref{lemma:variationalcharacterization} we have that $\int_{(-\infty,0)} \V(t,x+y) +\G(t,x)\geq 0$ for all $(t,x)\in D$. Then we have that for all $(t,x)\in [0,\m]\times \R$,
\begin{align*}
|R^{(\theta)}(t,x)|  
%\E_x\left( \int_0^{\m-t	} |\G(r+t,X_r)|\I_{\{X_r <\b(r+t) \}}\dd r \right) 
%&\qquad-\E_x\left( \int_0^{\m-t} \int_{(-\infty,0)} \V(r+t,X_r+y)\Pi(\dd y) \I_{\{X_r>\b(r+t) \}}\dd r \right)
\leq \E_x\left( \int_0^{\m-t	} |\G(r+t,X_r)| \dd r \right)
 \leq \m-t,
\end{align*}
where we used that $|\G|\leq 1$ in the last inequality. For each $(t,x)\in \R_+\times \R$, we define the times at which the process $X$ hits the curve $\b$. Let $\tau_b^{(1)}=\inf\{s\in [0,\m-t]: X_s\geq \b(s+t) \}$ and for $k\geq 1$,
\begin{align*}
\sigma_b^{(k)}&=\inf\{s\in [\tau_b^{k},\m-t] : X_s<\b(s+t)  \}\\
\tau_b^{(k+1)}&=\inf\{s\in [\sigma_b^{k},\m-t] : X_s \geq \b(s+t)  \},
\end{align*}
where in this context, we understand that $\inf \emptyset =\m-t$. Taking $t\in [0,\m]$ and $x>\b(t)$ and gives us  

\begin{align*}
R^{(\theta)}(t,x)&=-\E_x\left( \int_0^{\sigma_b^{(1)}} \int_{(-\infty,0)} \V(r+t,X_r+y)\Pi(\dd y) \dd r \right)+\E_x\left(\int_{\sigma_b^{(1)}}^{\tau_b^{(2)}	} \G(r+t,X_r)\dd r \right)\\
&\qquad+\E_x\left(\I_{\{\tau_b^{(2)}<\m-t \}} \int_{\tau_b^{(2)}}^{\m-t	} \G(r+t,X_r)\I_{\{X_r <\b(r+t) \}}\dd r \right) \\
&\qquad-\E_x\left( \I_{\{\tau_b^{(2)}<\m-t \}} \int_{\tau_b^{(2)}}^{\m-t} \int_{(-\infty,0)} \V(r+t,X_r+y)\Pi(\dd y) \I_{\{X_r> \b(r+t) \}}\dd r \right)\\
&=-\E_x\left( \int_0^{\sigma_b^{(1)}} \int_{(-\infty,0)} \V(r+t,X_r+y)\Pi(\dd y) \dd r \right)+\E_x( \V(t+\sigma_b^{(1)} , X_{\sigma_b^{(1)} })\I_{\{ \sigma_b^{(1)}<\m-t \}} )\\
&\qquad+\E_x(R^{(\theta)}(t+\tau_{b}^{(2)}, X_{\tau_{b}^{(2)}})\I_{\{ \tau_b^{(2)}<\m-t \}}  ),
\end{align*}
where the last equality follows from the strong Markov property applied at time $\sigma_b^{(1)}$ and $\tau_b^{(2)}$, respectively, and the fact that $\tau_D$ is optimal for $\V$. Using the compensation formula for Poisson random measures (see \cite{kyprianou2014fluctuations} Theorem 4.4), it can be shown that 
\begin{align*}
\E_x\left( \int_0^{\sigma_b^{(1)}} \int_{(-\infty,0)} \V(r+t,X_r+y)\Pi(\dd y) \dd r \right)=\E_x(\V(t+\sigma_b^{(1)},X_{\sigma_b^{(1)}} ) \I_{\{ \sigma_b^{(1)}<\m-t \}}).
\end{align*}
Hence, for all $(t,x)\in D$, we have that
\begin{align*}
R^{(\theta)}(t,x)=\E_x(R^{(\theta)}(t+\tau_{b}^{(2)}, X_{\tau_{b}^{(2)}})\I_{\{ \tau_b^{(2)}<\m-t \}}  ).
\end{align*}
Using an induction argument, it can be shown that for all $(t,x)\in D$ and $n\geq 2$,
\begin{align}
\label{eq:functionRintermsoftaubn}
R^{(\theta)}(t,x)=\E_x(R^{(\theta)}(t+\tau_{b}^{(n)}, X_{\tau_{b}^{(n)}})\I_{\{ \tau_b^{(n)}<\m-t \}}  )=\E_x(R^{(\theta)}(t+\tau_{b}^{(n)}, X_{\tau_{b}^{(n)}}) ),
\end{align}
where the last equality follows since $R^{(\theta)}(\m,x)=0$ for all $x\in \R$. Since $X$ is of finite variation, it can be shown that for all $x\in \R$, $\lim_{n \rightarrow \infty} \tau_b^{(n)}=\m-t$ $\P_x$-a.s. Therefore, from \eqref{eq:functionRintermsoftaubn} and taking $n\rightarrow \infty$, we conclude that for all $(t,x)\in D$,
\begin{align*}
|R^{(\theta)}(t,x)|\leq \lim_{n\rightarrow \infty}\E_x\left(|R^{(\theta)}(t+\tau_{b}^{(n)}, X_{\tau_{b}^{(n)}})| \right) \leq \lim_{n\rightarrow \infty} \E_x(\m-t-\tau_b^{(n)})=0,
\end{align*}
where the last inequality follows from the dominated convergence theorem. On the other hand, if we take $t\in [0,\m]$ and $x<\b(t)$ we have, by the strong Markov property applied to the filtration at time $\tau_b^{(1)}$, that
\begin{align*}
R^{(\theta)}(t,x)=\E_x\left( \int_0^{\tau_b^{(1)}} \G(r+t,X_r) \dd r \right) +\E_x(R^{(\theta)} (t+\tau_b^{(1)} , X_{\tau_b^{(1)}}  ))=\V(t,x),
\end{align*}
where we used the fact that $\tau_b^{(1)}$ is an optimal stopping time for $\V$ and that $R^{(\theta)}$ vanishes on $D$. So then \eqref{eq:characterizationofV} also holds in the finite variation case. 

\end{proof}
Next we proceed to show the uniqueness result. Suppose that there exist a non-positive continuous function $\U: [0,\m]\times \R \mapsto (-\infty,0]$ and a continuous function $\ct$ on $[0,\m)$ such that $\ct\geq \hth$ and $\ct(t)=0$ for all $t\in [t_b,\m)$. We assume that the pair $(\U,\ct)$ solves the equations

\begin{align}
\label{eq:unicityofV}
\U(t,x)&=\E_x\left( \int_0^{\m-t} \G(r+t,X_r)\I_{\{X_r< \ct (r+t) \}}\dd r\right)\nonumber\\
&\qquad -\E_x\left(\int_0^{\m-t}\int_{(-\infty,\ct(r+t)-X_r)}\U (r+t,X_r+y) \Pi(\dd y) \I_{\{X_r> \ct (r+t) \}} \dd r\right)
\end{align}
and 

\begin{align}
\label{eq:unicityofb}
\E_{\ct(t)}&\left( \int_0^{\m-t} \G(r+t,X_r)\I_{\{X_r< \ct (r+t) \}}\dd r\right)\nonumber\\
&\qquad -\E_{\ct(t)}\left(\int_0^{\m-t}\int_{(-\infty,\ct(r+t)-X_r)}\U (r+t,X_r+y) \Pi(\dd y) \I_{\{X_r > \ct (r+t) \}} \dd r\right)=0
\end{align}
when $t\in [0,t_b)$ and $x\in \R$. For $t\in [t_b,\m)$ and $x\in \R$, we assume that 
\begin{align}
\label{eq:Uaftertb}
\U(t,x)= \E_x(\tau_0^+\wedge (\m-t))-\frac{2}{\theta}e^{-\theta t}[1-\E_x(e^{-\theta (\tau_0^+ \wedge (\m-t))})].
\end{align}
In addition, we assume that 
\begin{align}
\label{eq:intVpidyGpositiveonD}
\int_{(-\infty,\ct(t)-x)} \U(t,x+y)\Pi(\dd y)+\G(t,x)\geq 0 \qquad \text{for all } t\in [0,t_b) \text{ and } x>\ct(t).
\end{align}
Note that $(\U,\ct)$ solving the above equations means that $\U(t,\ct(t))=0$ for all $t \in [0,\m)$ and $\U(\m,x)=0$ for all $x\in \R$. Denote $D_c$ as the ``stopping region'' under the curve $\ct$, i.e., $D_c=\{(t,x) \in  [0,\m]\times \R: x \geq \ct(t) \}$ and recall that $D=\{(t,x) \in  [0,\m]\times \R: x \geq \b(t) \}$ is the ``stopping region'' under the curve $\b$. We show that $\U$ vanishes on $D_c$ in the next Lemma.

\begin{lemma}
\label{lemma:UvanishesatDc}
We have that $\U(t,x)=0$ for all $(t,x)\in D_c$.
\end{lemma}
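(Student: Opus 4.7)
My approach has three steps. First, I show that $\U$ vanishes on the graph of $\ct$. Second, I use the strong Markov property at the first exit from $D_c$ (truncated at $t_b-t$) to rewrite \eqref{eq:unicityofV} as a boundary-restart term plus an inner-integral term. Third, I invoke the compensation formula for the jumps of $X$ to identify the inner integral with the jump-exit contribution to the boundary-restart term, so that only the continuous-exit contribution survives, and this is killed by the first step.

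For step one: when $t\in[0,t_b)$ the choice $x=\ct(t)$ in \eqref{eq:unicityofV}, compared with \eqref{eq:unicityofb}, yields $\U(t,\ct(t))=0$. When $t\in[t_b,\m)$ one has $\ct(t)=0$; since $0$ is regular for $(0,\infty)$ for any spectrally negative L\'evy process, $\tau_0^+=0$ $\P_x$-a.s. for every $x\geq 0$, and \eqref{eq:Uaftertb} collapses to $\U(t,x)=0$. This already settles the portion of $D_c$ with $t\geq t_b$ and, in particular, gives $\U(t,\ct(t))=0$ for every $t\in[0,\m)$. A routine dominated-convergence argument (using $|\G|\leq 1$, boundedness of $\U$, and the continuity of $\U$, $\ct$, $\G$) lets me pass to the limit $t\uparrow t_b$ in \eqref{eq:unicityofV} and conclude that the same identity also holds at $t=t_b$, a fact needed in the restart below.

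For steps two and three, fix $(t,x)\in D_c$ with $t\in[0,t_b)$, $x\geq\ct(t)$, and set
\[
\sigma_c:=\inf\{s\geq 0:X_s<\ct(s+t)\}\ \text{under }\P_x,\qquad \sigma:=\sigma_c\wedge(t_b-t).
\]
On $[0,\sigma)$ one has $X_r\geq\ct(r+t)$, so the first indicator in \eqref{eq:unicityofV} vanishes and the second equals $1$ (off a Lebesgue-null set). Splitting \eqref{eq:unicityofV} at $\sigma$ and applying the strong Markov property together with \eqref{eq:unicityofV} at $(\sigma+t,X_\sigma)$ (valid both for $\sigma+t<t_b$ and, by the extension from step one, for $\sigma+t=t_b$) gives
\[
\U(t,x)=\E_x[\U(\sigma+t,X_\sigma)]-\E_x\Bigl[\int_0^{\sigma}\int_{(-\infty,\ct(r+t)-X_{r-})}\U(r+t,X_{r-}+y)\Pi(\dd y)\dd r\Bigr].
\]
The compensation formula rewrites the last expectation as $\E_x[\sum_{r\leq\sigma}\U(r+t,X_r)\I_{\{\Delta X_r<\ct(r+t)-X_{r-}\}}]$; for $r<\sigma$ no such jump can occur by the definition of $\sigma_c$, so only the jump-exit contribution $\E_x[\U(\sigma_c+t,X_{\sigma_c})\I_{\{\sigma=\sigma_c,\,X_{\sigma_c}<\ct(\sigma_c+t)\}}]$ survives. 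Decomposing $\E_x[\U(\sigma+t,X_\sigma)]$ by exit mode, namely $\{\sigma=t_b-t\}$ (on which $X_\sigma\geq\ct(t_b)=0$, hence $\U(t_b,X_\sigma)=0$ by step one), continuous exit (on which $X_{\sigma_c}=\ct(\sigma_c+t)$, again killed by step one), and jump exit, produces exactly the quantity subtracted via the compensation formula, and the cancellation gives $\U(t,x)=0$.

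The main obstacle is the careful bookkeeping of the three exit modes (truncation at $t_b-t$, continuous crossing of $\ct$, and downward jump across $\ct$) together with the correct use of the compensation formula to match the jump-exit mass with the inner-integral term; a secondary technical point is the continuity extension of \eqref{eq:unicityofV} to $t=t_b$ that enables the uniform strong-Markov restart at $\sigma$.
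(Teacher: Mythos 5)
Your proposal is correct and follows essentially the same strategy as the paper: vanishing of $\U$ on the boundary and for $t\geq t_b$, a strong Markov split at the first exit from $D_c$, and the compensation formula to identify the inner $\Pi$-integral with the jump-exit contribution of the restart term. The only notable difference is that you truncate the exit time at $t_b-t$ (forcing the continuity extension of \eqref{eq:unicityofV} to $t=t_b$, which you correctly supply via dominated convergence using \eqref{eq:intVpidyGpositiveonD} for domination), whereas the paper runs $\sigma_c$ up to $\m-t$ and uses $\U(\m,\cdot)=0$ directly; both are valid and the bookkeeping is equivalent.
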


\begin{proof}
Since the statement is clear for $(t,x)\in [t_b,\m)\times [0,\infty)$, we take $t\in [0,t_b)$ and $x\geq \ct(t)$. Define $\sigma_c$ to be the first time that the process is outside $D_c$ before time $\m-t$, i.e.,
\begin{align*}
\sigma_c=\inf\{0\leq s\leq \m-t: X_{s} < \ct(t+s) \},
\end{align*}
where in this context, we understand that $\inf \emptyset= \m-t$. From the fact that $X_{r}\geq \ct(t+r)$ for all $r< \sigma_c$ and the strong Markov property at time $\sigma_c$, we obtain that
\begin{align*}
\U(t,x)
%&=\E_x\left( \int_0^{\m-t} \G(r+t,X_r)\I_{\{X_r< \ct (r+t) \}}\dd r\right)\nonumber\\
%&\qquad -\E_x\left(\int_0^{\m-t}\int_{(-\infty,\ct(r+t)-X_r)}\U (r+t,X_r+y) \Pi(\dd y) \I_{\{X_r>\ct (r+t) \}} \dd r\right)\\
&=\E_x(\U(t+\sigma_c,X_{\sigma_c})) -\E_x\left(\int_0^{\sigma_c}\int_{(-\infty,\ct(r+t)-X_r)}\U (r+t,X_r+y) \Pi(\dd y) \dd r\right)\\
&=\E_x(\U(t+\sigma_c,X_{\sigma_c})\I_{\{\sigma_c<\m-t, X_{\sigma_c}<\ct (t+\sigma_c) \}} ) \\
&\qquad-\E_x\left(\int_0^{\sigma_c}\int_{(-\infty,\ct(r+t)-X_r)}\U (r+t,X_r+y) \Pi(\dd y) \dd r\right),
\end{align*}
where the last equality follows since $\U(\m,x)=0$ for all $x \in \R$ and $\U(t,\ct(t))=0$ for all $t\in [0,t_b)$.
Then, applying the compensation formula for Poisson random measures (see \cite{kyprianou2014fluctuations} Theorem 4.4) we get 

\begin{align*}
\E_x(\U(t+\sigma_c,X_{\sigma_c})\I_{\{\sigma_c<\m-t, X_{\sigma_c}<\ct (t+\sigma_c) \}} )
%&=\E_{x}\left( \int_{(0,\m-t]}\int_{(-\infty,0)} \I_{\{X_{u} \geq  \ct(t+u) \text{ for all } u<r \}}\I_{\{X_{r-}+y <  \ct(t+r) \}}\U(t+r,X_{r-}+y) N(\dd r,\dd y)\right)\\
%&=\E_{x}\left( \int_0^{\m-t}\int_{(-\infty,0)}\I_{\{X_{u} \geq  \ct(t+u) \text{ for all } u<r \}}\I_{\{X_{r}+y <  \ct(t+r) \}}\U(t+r,X_{r}+y) \Pi(\dd y)\dd r\right)\\
&=\E_{x}\left( \int_0^{\sigma_c}\int_{(-\infty,\ct(t+r)-X_{t+r})}\U(t+r,X_{r}+y) \Pi(\dd y) \dd r\right).
\end{align*}
Hence  $\U(t,x)=0$ for all $(t,x)\in D_c$ as we claimed.
\end{proof}

The next lemma shows that $\U$ can be expressed as an integral involving only the gain function $\G$ stopped at the first time the process enters the set $D_c$. As a consequence, $\U$ dominates the function $\V$. 

\begin{lemma}
\label{lemma:VboundsUbybelow}
We have that $\U(t,x)\geq \V(t,x)$ for all $(x,t)\in \R\times [0,\m]$,

\end{lemma}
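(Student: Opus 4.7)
The inequality is trivial in several regions. For $(t,x)\in D_c$ with $t<\m$, Lemma~\ref{lemma:UvanishesatDc} gives $\U(t,x)=0$, while $\V(t,x)\leq 0$ by Lemma~\ref{lemma:basicpropertiesofV}. For $t=\m$ both sides vanish. For $t\in[t_b,\m)$ with $x<0=\ct(t)$, the postulated form \eqref{eq:Uaftertb} identifies $\U(t,x)$ with $\V_B(t,x)$, which coincides with $\V(t,x)$ by Lemma~\ref{lemma:characterizationoftb}. It therefore suffices to treat $(t,x)$ with $t\in[0,t_b)$ and $x<\ct(t)$.

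Introduce the stopping time $\sigma=\inf\{s\in[0,\m-t]:X_s\geq \ct(s+t)\}$, with the convention $\inf\emptyset=\m-t$. My plan is to establish the identity
\[
\U(t,x)=\E_x\left(\int_0^\sigma \G(r+t,X_r)\,\dd r\right),
\]
from which $\V(t,x)\leq \U(t,x)$ follows at once, since $\sigma\in\mathcal{T}$ and $\V$ is the infimum over all stopping times of the expected cumulative cost.

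To prove the identity, apply \eqref{eq:unicityofV} at $(t,x)$ and split the two integrals at $\sigma$. On $[0,\sigma]$ the inequality $X_r<\ct(r+t)$ forces the first indicator to equal one and the second to vanish, so that this piece contributes exactly $\E_x(\int_0^\sigma \G(r+t,X_r)\,\dd r)$. For the $[\sigma,\m-t]$ piece, condition on $\F_\sigma$ and apply the strong Markov property: the shifted integrands reproduce the right-hand side of \eqref{eq:unicityofV} evaluated at $(t+\sigma,X_\sigma)$, which equals $\U(t+\sigma,X_\sigma)$. On $\{\sigma<\m-t\}$ one has $X_\sigma\geq \ct(t+\sigma)$, so $(t+\sigma,X_\sigma)\in D_c$ and Lemma~\ref{lemma:UvanishesatDc} gives $\U(t+\sigma,X_\sigma)=0$; on $\{\sigma=\m-t\}$ one uses $\U(\m,\cdot)=0$. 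Consequently the $[\sigma,\m-t]$ contribution vanishes, proving the identity.

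The delicate step is justifying the strong Markov argument when $t+\sigma\geq t_b$, because the integral equation \eqref{eq:unicityofV} is postulated only for starting times in $[0,t_b)$ while \eqref{eq:Uaftertb} takes over thereafter. In that regime $\ct\equiv 0$, and I expect to verify that \eqref{eq:Uaftertb} itself satisfies the form of \eqref{eq:unicityofV} by a second application of the strong Markov property at $\tau_0^+$ together with the compensation formula for Poisson random measures (as used in the proof of Lemma~\ref{lemma:UvanishesatDc}), exploiting $\U(s',y')=0$ for $y'\geq 0$ whenever $s'\geq t_b$. This consistency verification is the main technical obstacle.
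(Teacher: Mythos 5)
Your argument is essentially the paper's own: you reduce to $t\in[0,t_b)$ and $x<\ct(t)$, introduce the first crossing time of $\ct$ (the paper's $\tau_c$), derive the identity $\U(t,x)=\E_x\bigl(\int_0^{\tau_c}\G(r+t,X_r)\,\dd r\bigr)$, and conclude by comparison with the infimum defining $\V$; the paper obtains $\U(t+\tau_c,X_{\tau_c})=0$ directly from upward creeping and $\U(s,\ct(s))=0$, whereas you invoke Lemma~\ref{lemma:UvanishesatDc}, which is equivalent. The ``delicate step'' you flag --- applying the strong Markov property when $t+\tau_c\geq t_b$, where \eqref{eq:unicityofV} is not postulated --- is a legitimate observation, but the paper implicitly relies on the fact that on $\{\tau_c<\m-t\}$ one lands at $(t+\tau_c,X_{\tau_c})\in D_c$, where $\U$ vanishes regardless of which of \eqref{eq:unicityofV} or \eqref{eq:Uaftertb} governs; the consistency check you sketch (via the compensation formula at $\tau_0^+$, mirroring Lemma~\ref{lemma:UvanishesatDc}) does go through, so you should carry it out rather than leave it as an ``obstacle'', but it does not represent a conceptual gap.
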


\begin{proof}
Note that we can assume that $t\in [0,t_b)$ because for $(t,x)\in D_c$, we have that $\U(t,x)=0\geq \V(t,x)$ and for $t\in [t_b,\m)$, $\U(t,x)=\V(t,x)$ for all $x\in \R$. Consider the stopping time 
\begin{align*}
\tau_c=\inf\{ s \in [0,\m-t] : X_{s} \geq  \ct(t+s) \}.
\end{align*}
Let $x\leq \ct (t)$, using the fact that $X_{r}< \ct(t+r)$ for all $r\leq \tau_c$ and the strong Markov property at time $\tau_c$, we obtain that
\begin{align}
\label{eq:representationofUbelowc}
\U(t,x)=\E_x\left( \int_0^{\tau_c} \G(r+t,X_r)\dd r\right) +\E_x(\U(t+\tau_c,X_{\tau_c}))=\E_x\left( \int_0^{\tau_c} \G(r+t,X_r)\dd r\right),
\end{align}
where the second equality follows since $X$ creeps upwards and therefore $X_{\tau_c}=\ct(t+\tau_c)$ for $\{\tau_c<\m-t \}$ and $\U(\m,x)=0$ for all $x\in \R$. Then from the definition of $\V$ (see \eqref{eq:optimalstoppingforallx}), we have that 
\begin{align*}
\U(t,x)\geq \inf_{\tau \in \mathcal{T}} \E_{t,x}\left( \int_0^{\tau} G^{(\theta)}(X_{t+r},t+r) dr\right)=\V(t,x).
\end{align*}
Therefore  $\U \geq \V$ on $ [0,\m]\times \R$.
\end{proof}
We proceed by showing that the function $\ct$ is dominated by $\b$. In the upcoming lemmas, we show that equality indeed holds. 
\begin{lemma}
We have that $\b(t)\geq \ct(t)$ for all $t\in [0,\m)$.
\end{lemma}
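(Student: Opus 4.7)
The plan is to argue by contradiction. Suppose there exists $t_0 \in [0, t_b)$ for which $\b(t_0) < \ct(t_0)$; the case $t_0 \in [t_b, \m)$ is trivial since both curves vanish there by the standing assumption on $\ct$ and the definition of $t_b$. Pick $x \in (\b(t_0), \ct(t_0))$; in particular $x > \b(t_0) \geq \hth(t_0)$, so $\G(t_0, x) > 0$.

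First I would use Lemma \ref{lemma:VboundsUbybelow} together with $\V(t_0, x) = 0$ (since $(t_0, x) \in D$) and the standing hypothesis $\U \leq 0$ to conclude $\U(t_0, x) = 0$. By continuity and Lemma \ref{lemma:UvanishesatDc} this upgrades to $\U(t_0, \cdot) \equiv 0$ on the whole interval $[\b(t_0), \ct(t_0)]$. Using the representation \eqref{eq:representationofUbelowc}, which is valid at $(t_0, x)$ because $x < \ct(t_0)$,
\begin{equation*}
0 \;=\; \U(t_0, x) \;=\; \E_x\!\left[\int_0^{\tau_c} \G(r + t_0, X_r)\,dr\right].
\end{equation*}

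To extract a contradiction, I would decompose the path at the first exit of $X$ from the strip between $\b$ and $\ct$. Set
\begin{equation*}
\sigma \;=\; \inf\{r \in [0, \m - t_0] : X_r \notin (\b(r+t_0), \ct(r+t_0))\},
\end{equation*}
which is almost surely strictly positive by continuity of $\b, \ct$ and right-continuity of $X$. Since $X$ is spectrally negative, at $\sigma$ either $X_\sigma = \ct(t_0 + \sigma)$ (creeping up) or $X_\sigma < \b(t_0 + \sigma)$ (jumping down). Applying the strong Markov property at $\sigma$ in \eqref{eq:representationofUbelowc} and using that $\U(t_0+\sigma, X_\sigma) = 0$ on $\{X_\sigma = \ct(t_0+\sigma)\}$ (Lemma \ref{lemma:UvanishesatDc}) and at the horizon $\m$ yields
\begin{equation*}
0 \;=\; \E_x\!\left[\int_0^{\sigma}\G(r + t_0, X_r)\,dr\right] + \E_x\!\left[\U(t_0 + \sigma, X_\sigma)\,\I_{\{X_\sigma < \b(t_0 + \sigma)\}}\right].
\end{equation*}
For $r < \sigma$ we have $X_r > \b(r+t_0) \geq \hth(r+t_0)$, so $\G(r+t_0, X_r) > 0$, and by right-continuity of $X$ and continuity of $\G$ this strict positivity persists on a right-neighbourhood of $0$; hence the first expectation is strictly positive. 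The second expectation is $\leq 0$ since $\U \leq 0$.

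\textbf{The main obstacle} is ruling out the cancellation of these two terms when $X$ has jumps. In the continuous-path (Brownian) regime $\Pi \equiv 0$, so $\{X_\sigma < \b(t_0 + \sigma)\}$ has probability zero and one obtains the immediate contradiction $0 > 0$. In general the spectrally negative process can jump from inside the strip below $\b$, where $\U$ may be strictly negative, and the negative second term might a priori absorb the positive first term. The plan is to iterate the strong Markov decomposition at successive re-entries of $X$ into the strip, exploiting that $\U \equiv 0$ on $[\b(s), \ct(s)]$ for every $s$ in the connected component of $\{t : \b(t) < \ct(t)\}$ containing $t_0$, that $\U \geq \V \geq -\m$ by Lemmas \ref{lemma:VboundsUbybelow} and \ref{lemma:basicpropertiesofV}, and that $\U(\m, \cdot) = 0$ at the horizon. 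Propagating the identity above forward, the accumulated strictly positive $\G$-contributions from excursions inside the strip dominate the accumulated negative $\U$-contributions on $\{X_\cdot < \b\}$ before the horizon $\m - t_0$ is exhausted, delivering the required contradiction and establishing $\b(t) \geq \ct(t)$ for all $t \in [0, \m)$.
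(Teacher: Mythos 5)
Your setup and the continuous-path case are correct, and you correctly identify the obstacle: for a genuinely spectrally negative process the term $\E_x\bigl[\U(t_0+\sigma,X_\sigma)\I_{\{X_\sigma<\b(t_0+\sigma)\}}\bigr]$ does not vanish and could a priori absorb the positive excursion integral. Your proposed resolution --- iterating the strong Markov decomposition at successive re-entries into the strip --- is left as a sketch and is not obviously convergent: between re-entries the process lives below $\b$, where $\G$ can be strictly negative (when $X_r<\hth(r)$), so the sign of the accumulated $\G$-contribution over the whole trajectory is not controlled, and nothing in the sketch shows that the positive strip contributions dominate. That is a genuine gap.

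The paper closes this gap by a different mechanism, and the ideas it uses are ones your sketch does not invoke. Instead of the representation $\U(t_0,x)=\E_x\bigl[\int_0^{\tau_c}\G\,\dd r\bigr]$ (your \eqref{eq:representationofUbelowc}), it applies the strong Markov property at $\sigma_b$ (the first time $X$ falls \emph{below} $\b$) directly in the integral equation \eqref{eq:unicityofV}, and then rewrites the jump term
\[
\E_x\Bigl[\U(t_0+\sigma_b,X_{\sigma_b})\,\I_{\{\sigma_b<\m-t_0,\,X_{\sigma_b}<\b(t_0+\sigma_b)\}}\Bigr]
=\E_x\Bigl[\int_0^{\sigma_b}\!\!\int_{(-\infty,0)}\U(t_0+r,X_r+y)\,\Pi(\dd y)\,\dd r\Bigr]
\]
via the compensation formula for Poisson random measures (using that $\U$ vanishes on $D$, so the indicator on the target set can be dropped). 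Combining with the other two terms of the integral equation collapses everything to
\[
0\geq\U(t_0,x)=\E_x\Bigl[\int_0^{\sigma_b}\Bigl(\int_{(-\infty,0)}\U(t_0+r,X_r+y)\,\Pi(\dd y)+\G(t_0+r,X_r)\Bigr)\I_{\{X_r<\ct(t_0+r)\}}\,\dd r\Bigr],
\]
and now the key point is that on $D$ the integrand is bounded below by $\int\V\,\Pi(\dd y)+\G=\varphi^{(\theta)}>0$ (since $\U\geq\V$). Since the process spends a set of $r$ of positive Lebesgue measure in the strip $\{\b<X<\ct\}\subset D$, the right-hand side is strictly positive, giving the contradiction in one shot, with no iteration. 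The two ingredients you would need to add to your sketch are precisely (i) the compensation formula to convert the jump expectation into a $\dd r\otimes\Pi(\dd y)$ integral, and (ii) the positivity of $\int\U\,\Pi+\G$ on $D$, which follows from $\U\geq\V$ together with the function $\varphi^{(\theta)}$ introduced after Lemma \ref{lemma:variationalcharacterization}.
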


\begin{proof}
The statement is clear for $t\in [t_b,\m)$. We prove the statement by contradiction. Suppose that there exists a value $t_0\in [0,t_b)$ such that $\b(t_0)<\ct(t_0)$ and take $x\in (\b(t_0),\ct(t_0))$. Consider the stopping time
\begin{align*}
\sigma_b=\inf\{s\in [0,\m-t_0]: X_{s} < \b(t_0+s) \}.
\end{align*}
Applying the strong Markov property to the filtration at time $\sigma_b$, we obtain that 
\begin{align*}
\U(t_0,x)&=
\E_{x	}(\U(t_0+\sigma_b,	X_{\sigma_b}))
+\E_{x}\left(\int_0^{\sigma_b}\G(t_0+r,X_{r})\I_{\{ X_{r} <\ct(t_0+r)   \}}\dd r\right)\\
&\qquad-\E_{x}\left(\int_0^{\sigma_b} \int_{(-\infty,0)} \U(t+r,X_{r}+y)\Pi( \dd y) \I_{\{X_{r} > \ct(t_0+r) \}}\dd r\right),
\end{align*}
where we used the fact that $\U(t,x)=0$ for all $(t,x)\in D_c$. From Lemma \ref{lemma:VboundsUbybelow} and the fact that $\U\leq 0$ (by assumption), we have that for all $t\in [0,\m)$ and $x>\b(t)$, $ \U(t,x)=0$. Hence, by the compensation formula for Poisson random measures, we obtain that 
\begin{align*}
0 &\geq \U(t_0,x)\\
&=
\E_{x	}(\U(t_0+\sigma_b,	X_{\sigma_b})\I_{\{\sigma_b<\m-t, X_{\sigma_b}<\b(t_0+\sigma_b) \}})
+\E_{x}\left(\int_0^{\sigma_b}\G(t_0+r,X_{r})\I_{\{ X_{r} <\ct(t_0+r)   \}}\dd r\right)\\
&\qquad-\E_{x}\left(\int_0^{\sigma_b} \int_{(-\infty,0)} \U(t+r,X_{r}+y)\Pi( \dd y) \I_{\{X_{r} >\ct(t_0+r) \}}\dd r\right)\\
&=\E_{x}\left( \int_0^{\sigma_b}\int_{(-\infty,0)} \U(t_0+r,X_{r}+y) \Pi(\dd y)\dd r\right)+\E_{x}\left(\int_0^{\sigma_b}\G(t_0+r,X_{r})\I_{\{ X_{r} <\ct(t_0+r)   \}}\dd r\right)\\
&\qquad-\E_{x}\left(\int_0^{\sigma_b} \int_{(-\infty,0)} \U(t+r,X_{r}+y)\Pi( \dd y) \I_{\{X_{r} > \ct(t_0+r) \}}\dd r\right)\\
&=\E_{x}\left( \int_0^{\sigma_b} \left[ \int_{(-\infty,0)} \U(t_0+r,X_{r}+y) \Pi(\dd y)+\G(t_0+r,X_{r}) \right]\I_{\{ X_{r} <\ct(t_0+r)   \}}\dd r\right).
\end{align*}
Recall from the discussion after Lemma \ref{lemma:variationalcharacterization} that the function $\varphi_t^{(\theta)}$ is strictly positive on $D$. Hence, we obtain that for all $(t,x)\in D$,
 
\begin{align*}
 \int_{(-\infty,0)} \U(t,x+y)\Pi(\dd y)+\G(t,x)\geq \int_{(-\infty,0)} \V(t,x+y)\Pi(\dd y)+\G(t,x)=\varphi_t^{(\theta)}(t,x)>0.
\end{align*}
The assumption that $\b(t_0)<\ct(t_0)$ together with the continuity of the functions $\b$ and $\ct$ mean that there exists $s_0\in(t_0,\m)$ such that $\b(r)<\ct(r)$ for all $r\in [t_0,s_0]$. Consequently, the $\P_{x}$ probability of $X$ spending a strictly positive amount of time (with respect to Lebesgue measure) in this region is strictly positive. We can then conclude that 
\begin{align*}
0 \geq \E_{x}\left( \int_0^{\sigma_b} \left[ \int_{(-\infty,0)} \U(t_0+r,X_{r}+y) \Pi(\dd y)+\G(t_0+r,X_{r}) \right]\I_{\{ X_{r} <\ct(t_0+r)   \}}\dd r\right)>0.
\end{align*}
This is a contradiction and therefore we conclude that $\b(t) \geq \ct(t)$ for all $t\in [0,\m)$.
\end{proof}

%
%
%\begin{lemma}
%For all $t\in [0,\m)$, the function $x\mapsto \U(t,x)$ is a non-decreasing function on $\R$.
%\end{lemma}
%\begin{proof}
%Define for all $(t,x)\in [0,\m)\times \R$,
%\begin{align*}
%\tau_{c(t,x)}=\inf\{0\leq s \leq \m-t: X_s+x\geq \ct(t+s)  \}.
%\end{align*}
%If $t\in [t_b,\m)$ the statement is clear. Then take $t\geq$ and $x\leq y \leq \ct( t)$ Then we have that $\tau_{c(t,y)}\leq \tau_{c(t,x)}$ and then by equation \eqref{eq:representationofUbelowc} and the strong Markov property applied at time $\tau_{c(t,y)}$ we have that 
%\begin{align*}
%\U(t,y)&-\U(t,x)\\
%&= \E\left(\int_0^{\tau_{c(t,y)}} \G(r+t,X_{r}+y)\dd r\right)-\E\left(\int_0^{\tau_{c(t,x)}} \G(r+t,X_{r}+x)\dd r\right)\\
%&=\E\left( \int_0^{\tau_{c(t,y)}} [ \G(r+t,X_r+y)- \G(r+t,X_r+x)] \dd r \right) -\E\left( \int_{\tau_{c(t,y)}}^{\tau_{c(t,x)}} \G(r+t,X_r+x)\dd r \right)\\
%&=\E\left( \int_0^{\tau_{c(t,y)}} [ \G(r+t,X_r+y)- \G(r+t,X_r+x)] \dd r \right) -\E( \U (t+\tau_{c(t,y)},X_{\tau_{c(t,y)}}+x  ))\\
%&\geq 0,
%\end{align*}
%where the last equality follows since the function $x \mapsto \G(t,x)$ is non-decreasing and $\U \leq 0$ by assumption. The  other cases follow from Lemma \ref{lemma:UvanishesatDc} and non-positivity of $\U$.
%\end{proof}
Note that the definition of $\U$ on $[t_b,\m)\times \R$ (see equation \eqref{eq:Uaftertb}) together with condition \eqref{eq:intVpidyGpositiveonD} imply that 
\begin{align*}
\int_{(-\infty,0)} \U(t,x+y)\Pi(\dd y)+ \G(t,x)\geq 0 
\end{align*}
for all $t\in [0,\m)$ and $x>\ct(t)$. The next Lemma shows that $\U$ and $\V$ coincide.

\begin{lemma}
We have that $\b(t)=\ct(t)$ for all $t\geq 0$ and hence $\V=\U$.
\end{lemma}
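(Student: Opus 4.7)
The plan is to derive the reverse inequality $\b \leq \ct$ by contradiction using the integral equations \eqref{eq:characterizationofV} and \eqref{eq:unicityofV}, and then conclude $\V = \U$ as an immediate consequence. I would suppose there exists $t_0 \in [0, t_b)$ with $\b(t_0) > \ct(t_0)$ and pick $x \in (\ct(t_0), \b(t_0))$. By Lemma \ref{lemma:UvanishesatDc}, $\U(t_0, x) = 0$, while by the definition of $\b$, $\V(t_0, x) < 0$; in particular $\U(t_0, x) - \V(t_0, x) > 0$. The goal would then be to show that the integral equations force $\U(t_0, x) - \V(t_0, x) \leq 0$, yielding the contradiction.

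Subtracting \eqref{eq:characterizationofV} from \eqref{eq:unicityofV} at $(t_0, x)$ and writing $\I_{\{X_r < \ct\}} - \I_{\{X_r < \b\}} = -\I_{\{\ct \leq X_r < \b\}}$, I would decompose $\U(t_0,x) - \V(t_0,x) = A + B$, where $A$ collects the $\G$ integral over $\{\ct(r+t_0) \leq X_r < \b(r+t_0)\}$ with a minus sign, and $B$ collects the two jump-integral terms. I would then split the integrand of $B$ according to whether $X_r \leq \ct(r+t_0)$, $\ct(r+t_0) < X_r \leq \b(r+t_0)$, or $X_r > \b(r+t_0)$. The first case contributes zero. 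In the third case, after writing $\int_{(-\infty, \b-X_r)} = \int_{(-\infty, \ct-X_r)} + \int_{[\ct-X_r, \b-X_r)}$, the integrand of $B$ reduces to
\begin{align*}
\int_{(-\infty,\ct(r+t_0)-X_r)}(\V-\U)(r+t_0,X_r+y)\Pi(\dd y) + \int_{[\ct(r+t_0)-X_r,\b(r+t_0)-X_r)}\V(r+t_0,X_r+y)\Pi(\dd y),
\end{align*}
which is non-positive since $\V \leq \U$ by Lemma \ref{lemma:VboundsUbybelow} and $\V \leq 0$ everywhere. In the middle case the integrand of $B$ equals $-\int_{(-\infty,\ct(r+t_0)-X_r)}\U(r+t_0,X_r+y)\Pi(\dd y)$, and combining this with the contribution of $A$ on the same event produces exactly $-[\G(r+t_0, X_r) + \int_{(-\infty,\ct(r+t_0)-X_r)}\U(r+t_0,X_r+y)\Pi(\dd y)]$, which is non-positive by the sign hypothesis \eqref{eq:intVpidyGpositiveonD} (applicable since $X_r > \ct(r+t_0)$ here).

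Summing the three pieces, $\U(t_0,x) - \V(t_0,x) \leq 0$, contradicting the strict positivity established above. Hence $\b \leq \ct$ on $[0, t_b)$; combined with the previous lemma, this gives $\b \equiv \ct$ on $[0, t_b)$, and both vanish on $[t_b, \m)$. The identity $\V = \U$ then follows: they coincide on the common stopping region (both equal zero on $\{x \geq \b(t) = \ct(t)\}$) and, for $x < \b(t) = \ct(t)$, the optimal stopping representation $\V(t,x) = \E_x[\int_0^{\tau_c} \G(r+t, X_r)\dd r]$ matches \eqref{eq:representationofUbelowc}. The main obstacle is the careful bookkeeping in $B$: precisely in the middle case one must use \eqref{eq:intVpidyGpositiveonD} to absorb a potentially positive contribution from $B$ against the negative contribution of $A$, while in the third case the pointwise ordering $\V \leq \U \leq 0$ must be exploited so that the jump integral over the band $(\ct-X_r,\b-X_r)$ does not spoil the sign.
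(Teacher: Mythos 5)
Your proof is correct and takes a genuinely different route from the paper's. The paper establishes $\b\leq\ct$ by applying the strong Markov property to the integral equation for $\U$ at the first entry time $\tau_b$ into $D$, then using $\U\geq\V$ together with optimality of $\tau_b$ for $\V$ to reduce to an expression $\E_x\bigl[\int_0^{\tau_b}\bigl(\G+\int\U\,\Pi\bigr)\I_{\{X_r>\ct\}}\,\dd r\bigr]$ that must be simultaneously nonpositive (since $X$ creeps onto $\b$, which lies in $D_c$ where $\U=0$) and strictly positive (by the sign hypothesis combined with the positive occupation time in the band between $\ct$ and $\b$). You instead subtract \eqref{eq:characterizationofV} from \eqref{eq:unicityofV} directly, read off the sign of the integrand pointwise in the three regions $X_r<\ct$, $\ct<X_r<\b$, $X_r>\b$, and derive the contradiction from the single scalar inequality $\U(t_0,x)=0>\V(t_0,x)$ (the latter strict inequality following from $x<\b(t_0)$, the monotonicity of $\V$ in $x$, and the identity $D=\{\V=0\}$). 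Both arguments rest on the same ingredients — \eqref{eq:intVpidyGpositiveonD}, $\V\leq\U\leq 0$, Lemma \ref{lemma:UvanishesatDc}, and the fact that the boundary events $\{X_r=\ct(r+t)\}$ and $\{X_r=\b(r+t)\}$ are Lebesgue-null in $r$ — but yours sidesteps the paper's ``positive occupation time in the band'' step: no strict version of the sign condition is needed because strictness comes for free from $\V(t_0,x)<0$. The bookkeeping you flag as the main obstacle is exactly right and you handle it correctly: the middle-band contribution is absorbed against $-\G$ via \eqref{eq:intVpidyGpositiveonD}, and in the outer region the split $\int_{(-\infty,\b-X_r)}=\int_{(-\infty,\ct-X_r)}+\int_{[\ct-X_r,\b-X_r)}$ leaves two nonpositive terms by $\V\leq\U$ and $\V\leq 0$.
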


\begin{proof}
We prove that $\b=\ct$ by contradiction. Assume that there exists $s_0$ such that $\b(s_0)>\ct(s_0)$. Since $\ct(t)=\b(t)=0$ for all $t\in [t_b,\m)$, we deduce that $s_0\in [0,t_b)$. Let $\tau_b$ be the stopping time
\begin{align*}
\tau_b=\inf\{t\geq 0: X_s \geq \b(s_0+t) \}.
\end{align*}
With the Markov property applied to the filtration at time $\tau_b$, we obtain that for any $x\in (\ct(s_0),\b(s_0))$

\begin{align*}
\E_{x}(\U(s_0+\tau_b, X_{\tau_b}	))
&=\U(s_0,x)
-\E_{x}\left(\int_0^{\tau_b}\G(r+s_0,X_r)\I_{\{ X_{r} <\ct(r+s_0) \}} \dd r\right)\\
&\qquad+\E_{x}\left(\int_0^{\tau_b} \int_{(-\infty,0)} \U(r+s_0, X_r+y)\I_{\{ X_{r} >\ct(r+s_0) \}}\Pi( \dd y)\dd r\right)\\
&\geq \V(s_0,x)
-\E_{x}\left(\int_0^{\tau_b}\G(r+s_0,X_r)\I_{\{ X_{r} <\ct(r+s_0) \}} \dd r\right)\\
&\qquad+\E_{x}\left(\int_0^{\tau_b} \int_{(-\infty,0)} \U(r+s_0, X_r+y)\I_{\{ X_{r} > \ct(r+s_0) \}}\Pi( \dd y)\dd r\right)\\
&=\E_{x}\left(\int_0^{\tau_b}\G(r+s_0,X_r)\I_{\{ X_{r} \geq \ct(r+s_0) \}} \dd r\right)\\
&\qquad+\E_{x}\left(\int_0^{\tau_b} \int_{(-\infty,0)} \U(r+s_0, X_r+y)\I_{\{ X_{r} > \ct(r+s_0) \}}\Pi( \dd y)\dd r\right),
\end{align*}
where the second inequality follows from the fact that $\U\geq \V$ (see Lemma \ref{lemma:VboundsUbybelow}) and the last equality follows as $\tau_b$ is the optimal stopping time for $\V(s_0,x)$. Note that since $X$ creeps upwards, we have that $\U(s_0+\tau_b,X_{\tau_b})=\U(s_0+\tau_b,\b(s_0+\tau_b))=0$. Hence,  

 \begin{align*}
 \E_{x}&\left(\int_0^{\tau_b}\G(r+s_0,X_r)\I_{\{ X_{r} \geq \ct(r+s_0) \}} \dd r\right)\\
&\qquad+\E_{x}\left(\int_0^{\tau_b} \int_{(-\infty,0)} \U(r+s_0, X_r+y)\I_{\{ X_{r} > \ct(r+s_0) \}}\Pi( \dd y)\dd r\right) \leq 0.
 \end{align*}
 However, the continuity of the functions $\b$ and $\ct$ gives the existence of $s_1\in (s_0,\m)$ such that $ \ct(r)<\b(r)$ for all $r\in [s_0,s_1]$. Hence, together with the fact that $\int_{(-\infty,0)} \U(x+y,t)\Pi(\dd y)+\G(x,t)>0$ for all $(t,x)\in D_c$ we can conclude that 

\begin{align*}
 \E_{x}&\left(\int_0^{\tau_b}\G(r+s_0,X_r)\I_{\{ X_{r} \geq \ct(r+s_0) \}} \dd r\right)\\
&\qquad+\E_{x}\left(\int_0^{\tau_b} \int_{(-\infty,0)} \U(r+s_0, X_r+y)\I_{\{ X_{r} > \ct(r+s_0) \}}\Pi( \dd y)\dd r\right) > 0,
 \end{align*}
which shows a contradiction.

\end{proof}

\section{Examples}
\label{sec:examples}

\subsection{Brownian motion with drift}

Suppose that $X=\{X_t ,t\geq 0 \}$ is a Brownian motion with drift. That is for any $t\geq 0$, $X_t=\mu t+\sigma B_t$, where $\sigma>0$ and $\mu\in \R$. In this case, we have that 
\begin{align*}
\psi(\beta)=\mu \beta +\frac{1}{2}\sigma^2 \beta^2
\end{align*}
for all $\beta \geq 0$. Then 
\begin{align*}
\Phi(q)=\frac{1}{\sigma^2}\left[\sqrt{\mu^2+2\sigma^2 q }-\mu \right].
\end{align*}
It is well known that $-\underline{X}_{\et}$ has exponential distribution (see e.g. \cite{Borodin_2002} pp251 or \cite{kyprianou2014fluctuations} pp 233) with distribution function given by
\begin{align*}
F^{(\theta)}(x)=1-\exp\left(-\frac{x}{\sigma^2}\left[\sqrt{\mu^2+2\sigma^2 \theta }+\mu \right] \right)\qquad \text{for } x>0.
\end{align*}
Denote $\Phi(x;a,b^2)$ as the distribution function of a Normal random variable with mean $a \in \R$ and variance $b^2$, i.e., for any $x\in \R$,
\begin{align*}
\Phi(x;a, b^2)= \int_{-\infty}^x  \frac{1}{\sqrt{2\pi b^2}} e^{-\frac{1}{2b^2} (y-a)^2}\dd y.
\end{align*}
For any $b,s,t\geq 0$ and $x\in \R$, define the function 
\begin{align*}
K(t,x,s,b)&=\E\left(  G^{(\theta)}(s+t, X_s+x)\I_{\{ X_s+x\leq b\}}\right).
\end{align*}
Then it can be easily shown that 
\begin{align*}
K(t,x,s,b) 
&= \Phi(b-x;\mu s,\sigma^2 s)-2e^{-\theta (s+t)}\Phi(-x;\mu s,\sigma^2 s)-2e^{-\theta t} \exp\left(-\frac{x}{\sigma^2}\left[\sqrt{\mu^2+2\sigma^2 \theta }+\mu \right] \right)\\
&\qquad \times \left[  \Phi(b-x,-s\sqrt{\mu^2+2\sigma^2 \theta},s \sigma^2)-\Phi(-x,-s\sqrt{\mu^2+2\sigma^2 \theta},s \sigma^2) \right].
\end{align*}
Thus, we have that $b^{(\theta)}$ satisfies the non-linear integral equation
\begin{align*}
\int_0^{\m-t} K(t,b^{(t)}(t),s,b^{(\theta)}(t+s))\dd s=0
\end{align*}
for all $t\in [0,\m)$ and the value function $\V$ is given by
\begin{align*}
\V(t,x)=\int_0^{\m-t} K(t,x,s, \b(t+s)) \dd s
\end{align*}
for all $(t,x)\in \R_+ \times \R$. Note that we can approximate the integrals above by Riemann sums so a numerical approximation can be implement. Indeed, take $n \in \mathbb{Z}_+$ sufficiently large and define $h=\m/n$.  For each $k\in \{0,1,2,\ldots,n \}$, we define $t_k=kh$. Then the sequence of times $\{ t_k, k= 0,1,\ldots,n \}$ is a partition of the interval $[0,\m]$. Then, for any $x\in \R$ and $t\in [t_k,t_{k+1})$ for $k \in \{0,1,\ldots, n-1 \}$ we approximate $\V(t,x)$ by 
\begin{align*}
\V_h(t_k,x)=\sum_{i=k}^{n-1} K(t_k,x,t_{i-k+1},b_{i})h,
\end{align*}
where the sequence $\{b_k, k=0,1,\ldots,n -1\}$ is a solution to 
\begin{align*}
\sum_{i=k}^{n-1} K(t_k,x,t_{i-k+1},b_{i})=0
\end{align*}
for each $k\in \{0,1,\ldots, n-1 \}$. Note that the sequence $\{b_k, k=0,1,\ldots,n \}$ is a numerical approximation to the sequence $\{\b(t_k), k=0,1,\ldots,n-1 \}$ (for $n$ sufficiently large) and can be calculated by using backwards induction. In the Figure \ref{fig:Brownianmotionmu2sigma1}, we show a numerical calculation of the equations above. The parameters used are $\mu=2$ and $\sigma=1$, whereas we chose $\m=10$.

\begin{figure}[htbp]
\centering
\includegraphics[scale=0.4]{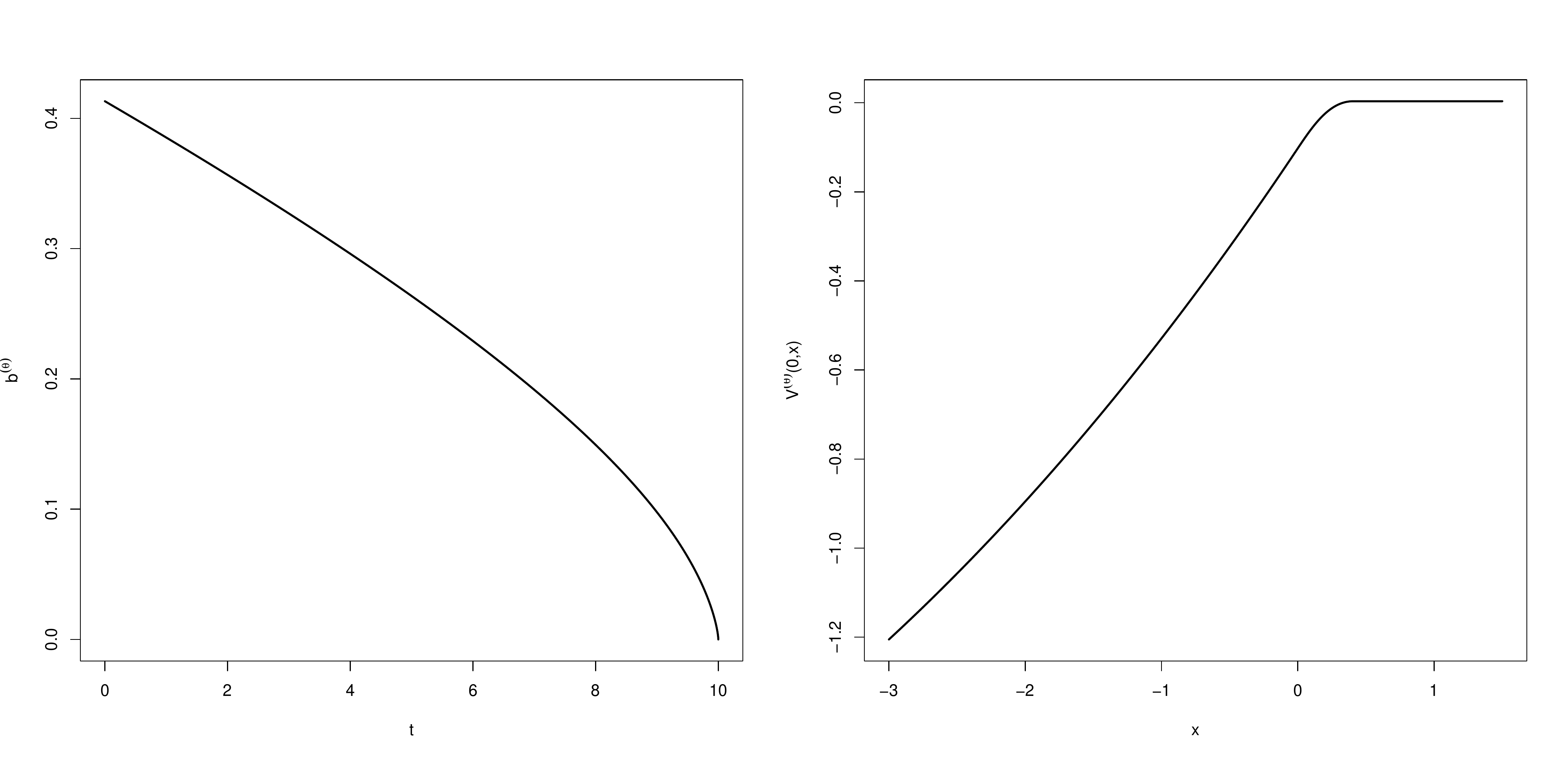}
\caption{Brownian motion with drift $\mu=2$ and $\sigma=1$. Left hand side: Optimal boundary; Right hand side: Value function fixing $t=1$.}
\label{fig:Brownianmotionmu2sigma1}
\end{figure}

\newpage
\subsection{Brownian motion with exponential jumps}

Let $X=\{X_t,t\geq 0  \}$ be a compound Poisson process perturbed by a Brownian motion, that is 
\begin{align}
\label{eq:Brownianmotionwithexpjumps}
X_t=\sigma B_t+\mu t -\sum_{i=1}^{N_t} Y_i,
\end{align}
where $B=\{B_t,t\geq 0 \}$ is a standard Brownian motion, $N=\{N_t,t\geq 0 \}$ is Poisson process with rate $\lambda$ independent of $B$, $\mu \in \R$, $\sigma > 0$ and the sequence $\{Y_1,Y_2,\ldots \}$ is a sequence of independent random variables exponentially distributed with mean $1/\rho>0$. Then in this case, the Laplace exponent is derived as
\begin{align*}
\varphi(\beta)=\frac{\sigma^2}{2}\beta^2 +\mu \beta -\frac{\lambda \beta}{\rho +\beta }.
\end{align*}
Its L\'evy measure, given by $\Pi(\dd y)=\lambda \rho e^{\rho y} \I_{\{y<0 \}} \dd y$ is a finite measure and $X$ is a process of infinite variation. According to \cite{kyprianou2011theory}, the scale function in this case is given for $q\geq 0$ and $x\geq 0$ by,
\begin{align*}
W^{(q)}(x)=\frac{e^{\Phi(q)x}}{\psi'(\Phi(q))}+\frac{e^{\zeta_1(q) x}}{\psi'(\zeta_1(q))} +\frac{e^{\zeta_2(q) x}}{\psi'(\zeta_2(q))}, 
\end{align*}
where $\zeta_2(q), \zeta_1(q)$ and $\Phi(q)$ are the three real solutions to the equation $\psi(\beta)=q$, which satisfy $\zeta_2(q)<-\rho<\zeta_1(q)<0<\Phi(q)$. The second scale function, $Z^{(q)}$, takes the form
\begin{align*}
Z^{(q)}(x)
%&=1+q\int_0^x  \left[  \frac{e^{\Phi(q)y}}{\psi'(\Phi(q))}+\frac{e^{\zeta_1(q) y}}{\psi'(\zeta_1(q))} +\frac{e^{\zeta_2(q) y}}{\psi'(\zeta_2(q))} \right] \dd y\\
&=1+q  \left[  \frac{e^{\Phi(q)x}-1}{\Phi(q)\psi'(\Phi(q))}+\frac{e^{\zeta_1(q) x}-1}{\zeta_1(q)\psi'(\zeta_1(q))} +\frac{e^{\zeta_2(q) x}-1}{\zeta_2(q)\psi'(\zeta_2(q))} \right].
\end{align*}
Note that since we have exponential jumps (and hence $\Pi(\dd y)=\lambda \rho e^{\rho y} \I_{\{y<0\}}$), we have that for all $t\in [0,\m)$ and $x>0$,
\begin{align*}
\int_{(-\infty,-x)} \V(t,\b(t)+x+y)\Pi(\dd y)
%&=\int_{(-\infty,-x)} \V(t,\b(t)+x+y)\rho e^{\rho y} \dd y\\
%&=\int_{(-\infty,0)} \V(t,\b(t)+u)\rho e^{\rho (u-x)} \dd u\\
%&=e^{-\rho x }\int_{(-\infty,0)} \V(t,\b(t)+u)\rho e^{\rho u} \dd u\\
&=e^{-\rho x }\int_{(-\infty,0)} \V(t,\b(t)+y)\Pi(\dd y).
\end{align*}
Then, for any $(t,x)\in [0,\m]\times \R$, equation \eqref{eq:characterizationofV} reads as
\begin{align*}
\V(t,x)
&=\E_x\left( \int_0^{\m-t	} \G(r+t,X_r)\I_{\{X_r <\b(r+t) \}}\dd r \right)\nonumber \\
&\qquad-\E_x\left( \int_0^{\m-t} e^{-\rho(X_r -\b(r+t)) }\mathcal{V}(r+t) \I_{\{X_r> \b(r+t) \}}\dd r \right)
\end{align*}
%\begin{align*}
%\V(t,x)
%%&=\E_x\left( \int_0^{\m-t	} \G(r+t,X_r)\I_{\{X_r <\b(r+t) \}}\dd r \right)\nonumber \\
%%&\qquad-\E_x\left( \int_0^{\m-t} \int_{(-\infty,\b(r+t)-X_r)} \V(r+t,X_r+y)\Pi(\dd y) \I_{\{X_r\geq \b(r+t) \}}\dd r \right)\\
%%&=\E_x\left( \int_0^{\m-t	} \G(r+t,X_r)\I_{\{X_r <\b(r+t) \}}\dd r \right)\nonumber \\
%%&\qquad-\E_x\left( \int_0^{\m-t} e^{-\rho(X_r -\b(r+t)) }\int_{(-\infty,0)} \V(r+t,\b(r+t)+y)\Pi(\dd y) \I_{\{X_r\geq \b(r+t) \}}\dd r \right)\\
%&=\E_x\left( \int_0^{\m-t	} \G(r+t,X_r)\I_{\{X_r <\b(r+t) \}}\dd r \right)\nonumber \\
%&\qquad-\E_x\left( \int_0^{\m-t} e^{-\rho(X_r -\b(r+t)) }\mathcal{V}(r+t,\b(r+t)) \I_{\{X_r\geq \b(r+t) \}}\dd r \right),
%\end{align*}
where for any $r,s\in [0,\m)$, $b\geq 0$ and $x\in \R$,
\begin{align*}
\mathcal{V}(t)&=\int_{(-\infty,0)} \V(t,\b(t)+y)\Pi(\dd y).
\end{align*}
Note that the equation above suggest that in order to find a numerical value of $\b$ using Theorem \ref{thm:Vsatisfiesequation} we only need to know the values of the function $\mathcal{V}$ and not the values of $\int_{(-\infty,0)}\V(t,x+y)\Pi(\dd y)$ for all $t\in [0,\m]$ and $x>\b(t)$. The next Corollary confirms that notion.
\begin{cor}
Let $\theta>0$. Assume that $X=\{X_t, t\geq 0 \}$ is of the form \eqref{eq:Brownianmotionwithexpjumps} with $\mu\in \R$, $\sigma, \lambda, \rho>0$. Suppose that $\ct$ and $\mathcal{U}$ are continuous functions on $[0,\m)$ such that $\ct \geq h^{(\theta)}$ and $0\geq \mathcal{U}(t) \geq -\G(t,\ct(t))$ for all $t\in [0,\m)$. For any $(t,x)\in [0,\m]\times \R$ we define the function 
\begin{align*}
\U(t,x)&=\E_x\left( \int_0^{\m-t	} \G(r+t,X_r)\I_{\{X_r <\ct(r+t) \}}\dd r \right) \\
&\qquad-\E_x\left( \int_0^{\m-t} e^{-\rho(X_r -\ct(r+t)) }\mathcal{U}(r+t) \I_{\{X_r> \ct(r+t) \}}\dd r \right).
\end{align*}
Further assume that there exists a value $h>0$ such that $\U(t,x)=0$ for any $t\in [0,\m)$ and $x\in [\b(t),\b(t)+h]$. If $\U$ is a non-positive function, we have that $\ct= \b$ and $\U=\V$.
\end{cor}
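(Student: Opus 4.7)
The strategy is to invoke the uniqueness statement of Theorem \ref{thm:Vsatisfiesequation} applied to the candidate pair $(\U,\ct)$. Most of the required hypotheses---$\U$ non-positive, $\ct\geq\hth$ continuous, $\ct(t)=0$ for $t\in[t_b,\m)$, and \eqref{eq:Uaftertb} on $[t_b,\m)$---are either assumed outright or follow directly from the hypotheses of the Corollary. Continuity of $\U$ on $\R_+\times\R$ is obtained from standard smoothing properties of the occupation-time integrals appearing in its defining equation.

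The key analytic step is to verify that the Corollary's integral equation coincides with \eqref{eq:unicityofV}--\eqref{eq:Uaftertb}. The exponential form $\Pi(\dd y)=\lambda\rho e^{\rho y}\I_{\{y<0\}}\dd y$ enjoys a memoryless-type property: when $X_r>\ct(r+t)$, the change of variable $v=X_r+y-\ct(r+t)$ yields
\begin{align*}
\int_{(-\infty,\ct(r+t)-X_r)}\U(r+t,X_r+y)\Pi(\dd y) = e^{-\rho(X_r-\ct(r+t))}\int_{(-\infty,0)}\U(r+t,\ct(r+t)+v)\Pi(\dd v).
\end{align*}
Hence the Corollary's equation agrees exactly with \eqref{eq:unicityofV} provided the identification $\mathcal{U}(t)=\int_{(-\infty,0)}\U(t,\ct(t)+v)\Pi(\dd v)$, which I call $(\star)$, holds for all $t\in[0,\m)$. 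Granted $(\star)$, the positivity hypothesis $\int_{(-\infty,0)}\U(t,x+y)\Pi(\dd y)+\G(t,x)\geq 0$ for $x\geq\ct(t)$ reduces, at $x=\ct(t)$, to the standing assumption $\mathcal{U}(t)\geq -\G(t,\ct(t))$, and extends to $x>\ct(t)$ using the scaling $e^{-\rho(x-\ct(t))}\leq 1$ combined with the monotonicity of $\G(t,\cdot)$.

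Establishing $(\star)$ is the main obstacle, and it is precisely where the vanishing of $\U$ not merely at $\ct(t)$ but on a strip $[\ct(t),\ct(t)+h]$ of positive width $h>0$ plays its role. Evaluating the defining equation at $x=\ct(t)+\varepsilon$ for $\varepsilon\in[0,h]$ produces a one-parameter family of integral identities in $(t,\varepsilon)$. Differentiating in $\varepsilon$ at $\varepsilon=0$, exploiting spatial homogeneity and the spectrally negative structure (so that $X$ can cross $\ct$ downward continuously through the Gaussian component), one isolates the contribution of $\mathcal{U}(t)$ at the boundary and obtains $(\star)$ after matching with the analogous identity obtained by substituting $\U$ directly into \eqref{eq:unicityofV}. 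With $(\star)$ in hand and the remaining hypotheses verified, Theorem \ref{thm:Vsatisfiesequation} delivers $\U=\V$ and $\ct=\b$.
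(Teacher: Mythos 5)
Your reduction is the right one, and it matches the paper's own skeleton: continuity of $\U$ by dominated convergence (using that $X$ has infinite variation so $\P_x(X_r=\ct(r+t))=0$), the memoryless factorisation of the exponential L\'evy measure so that the Corollary's equation becomes exactly \eqref{eq:unicityofV} once one identifies $\mathcal{U}(t)$ with $H(t):=\int_{(-\infty,0)}\U(t,\ct(t)+v)\Pi(\dd v)$, and the verification that $\mathcal{U}(t)\geq -\G(t,\ct(t))$ propagates to $x>\ct(t)$ via $e^{-\rho(x-\ct(t))}\leq 1$ and monotonicity of $\G(t,\cdot)$. The latter verification is worth keeping explicit; the paper leaves it implicit.

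The gap is in the step you yourself flag as the main obstacle: proving $(\star)$, i.e.\ $H(t)=\mathcal{U}(t)$. Your proposed route --- differentiate the family of identities $\U(t,\ct(t)+\varepsilon)=0$, $\varepsilon\in[0,h]$, in $\varepsilon$ at $\varepsilon=0$ and ``match'' with \eqref{eq:unicityofV} --- does not go through as sketched. Differentiating the vanishing of $\U$ across the strip only tells you that $\partial_x\U$, $\partial_x^2\U$, etc.\ vanish along $\ct$; it does not isolate the unknown $\mathcal{U}(t)$ from the unknown $H(t)$, because $H$ enters only through the jump operator and never through spatial derivatives along the strip. Moreover, the ``analogous identity obtained by substituting $\U$ directly into \eqref{eq:unicityofV}'' is precisely the statement to be proved, so matching against it is circular. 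The paper instead runs a genuinely probabilistic argument: fix $x\in[\ct(t),\ct(t)+h)$, let $\tau_\delta$ be the first exit time (before $\delta$) of $X$ from the moving strip $[\ct(s+t),\ct(s+t)+h]$, apply the strong Markov property at $\tau_\delta$ (using $\U\equiv 0$ on the strip and the fact that $X$ crosses $\ct$ upward only by creeping) to get
\begin{align*}
0=\U(t,x)= -\E_x\Bigl(\int_0^{\tau_\delta} e^{-\rho(X_r-\ct(r+t))}\mathcal{U}(r+t)\,\dd r\Bigr)+\E_x\bigl(\U(t+\tau_\delta,X_{\tau_\delta})\I_{\{X_{\tau_\delta}<\ct(t+\tau_\delta)\}}\bigr),
\end{align*}
convert the second term via the compensation formula for the Poisson random measure into $\E_x\bigl(\int_0^{\tau_\delta} e^{-\rho(X_r-\ct(r+t))}H(r+t)\,\dd r\bigr)$, so that $\E_x\bigl(\int_0^{\tau_\delta} e^{-\rho(X_r-\ct(r+t))}[H(r+t)-\mathcal{U}(r+t)]\,\dd r\bigr)=0$, and then divide by $\E_x(\tau_\delta)$ and send $\delta\downarrow 0$ to extract $H(t)=\mathcal{U}(t)$ by continuity. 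This compensation-formula mechanism is what actually produces $H$; your differentiation does not. To repair your argument you would need to replace the differentiation step with this exit-time / compensation-formula argument (or an equivalent application of the generator that keeps the nonlocal jump term), at which point the proof coincides with the paper's.
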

\begin{proof}
First note that, since $X$ is of infinite variation, $\P_x(X_r=\ct(r+t))=0$ for all $r,t \in [0,\m)$ such that $r+t<\m$ and $x\in \R$. Hence, by continuity of $\G$ and $\mathcal{U}$, and by dominated convergence theorem, we have that $\U$ is continuous. By means of Theorem \ref{thm:Vsatisfiesequation} is enough to show that $\U$ satisfies the integral equation,
\begin{align*}
\U(t,x)&=\E_x\left( \int_0^{\m-t	} \G(r+t,X_r)\I_{\{X_r <\ct(r+t) \}}\dd r \right) \\
&\qquad-\E_x\left( \int_0^{\m-t} \int_{(-\infty,\ct(r+t)-X_r)} \U(r+t,X_r+y)\Pi(\dd y) \I_{\{X_r> \ct(r+t) \}}\dd r \right)\\
&=\E_x\left( \int_0^{\m-t	} \G(r+t,X_r)\I_{\{X_r <\ct(r+t) \}}\dd r \right) \\
&\qquad-\E_x\left( \int_0^{\m-t} e^{-\rho(X_r -\ct(r+t)) }H(r+t) \I_{\{X_r>\ct(r+t) \}}\dd r \right),
\end{align*}
where $H(r)=\int_{(-\infty,0)} \U(r,\ct(r)+y)\Pi(\dd y)$ for all $r\in [0,\m)$ and in the last equality we used the explicit form of $\Pi(\dd y)$. Then it suffices to show that $H(t)=\mathcal{U}(t)$ for all $t\in [0,\m)$.\\

Let $t\geq 0$. For any $\delta \in (0,\m-t)$, consider the stopping time 
\begin{align*}
\tau_{\delta}=\inf\{ s\in [0,\delta]: X_s\notin [\ct(s+t), \ct(s+t)+h]\}.
\end{align*}
Note that for any $s<\tau_{\delta}$ we have that $X_s\in (\ct(s+t), \ct(s+t)+h)$ and $X_{\delta}\in (\ct(s+t), \ct(s+t)+h)$ in the event $\{  \tau_{\delta}=\delta \}$. Then using the strong Markov property at time $\tau_{\delta}$, we have that for any $x\in [\ct(t), \ct(t)+h)$, 
\begin{align*}
0&=\U(t,x)
= -\E_x\left( \int_0^{\tau_{\delta}} e^{-\rho(X_r -\ct(r+t)) }\mathcal{U}(r+t) \dd r \right)+\E_x( \U(t+\tau_{\delta},X_{\tau_{\delta}})\I_{\{X_{\tau_{\delta}}<\ct(t+\tau_{\delta})\} }),
\end{align*}
where in the last equality we used the fact that $\U(t,x)=0$ for all $x\in [\ct(t), \ct(t)+h]$, the continuity of $\ct$ and that $X$ can only cross above $\ct$ by creeping. By using the compensation formula for Poisson Random measures, we obtain that 
\begin{align*}
\E_x( \U(t+\tau_{\delta},X_{\tau_{\delta}})\I_{\{X_{\tau_{\delta}}<\ct(t+\tau_{\delta})\} })
%&=\E_x\left( \int_{0}^{\delta} \int_{(-\infty,0)}  \U(t+r,X_{r-}+y) \I_{\{X_{r-}+y<\ct(t+r)\} } \I_{\{ X_{r-} \in (\ct(r+t), \ct(r+t)+h ) \}}N(\dd r, \dd y)\right)\\
%&=\E_x\left( \int_{0}^{\delta} \int_{(-\infty,0)}  \U(t+r,X_{r-}+y) \I_{\{X_{r-}+y<\ct(t+r)\} } \I_{\{ X_{r-} \in (\ct(r+t), \ct(r+t)+h ) \}}\dd r \Pi( \dd y)\right)\\
&=\E_x\left( \int_{0}^{\tau_{\delta}} \int_{(-\infty,0)}  \U(t+r,X_{r}+y) \I_{\{X_{r}+y<\ct(t+r)\} }\dd r \Pi( \dd y)\right)\\
&=\E_x\left( \int_0^{\tau_{\delta}} e^{-\rho(X_r -\ct(r+t)) }H(r+t) \dd r \right).
\end{align*}
Hence we conclude that for any $\delta>0$, 
\begin{align}
\label{eq:equationintermsofHandmathcalU}
0&=\E_x\left( \int_0^{\tau_{\delta}} e^{-\rho(X_r -\ct(r+t)) }[H(r+t)-\mathcal{U}(r+t)] \dd r \right)
\end{align}
and hence,
\begin{align*}
0\leq \E_x\left( \int_0^{\tau_{\delta}} [H(r+t)-\mathcal{U}(r+t)] \dd r \right).
\end{align*}
By continuity of $H$ and $\mathcal{U}$ we obtain that
% for all $\varepsilon>0$ there exists $\delta_0$ such that for all $0<r<\delta_0$, $|H(r+t)-\mathcal{U}(r+t) -H(t)+\mathcal{U}(t)|<\varepsilon$. Then, for any $x\in (\b(t),\b(t)+h)$,
%\begin{align*}
%&\left|\E_x\left( \int_0^{\tau_{\delta_0}}[H(r+t)-\mathcal{U}(r+t)] \dd r \right)-\E_x(\tau_{\delta_0})[H(t)+\mathcal{U}(t)]\right|\\
%&\leq \E_x\left( \int_0^{\tau_{\delta_0}}|H(r+t)-\mathcal{U}(r+t)-H(t)+\mathcal{U}(t)| \dd r\right)\\
%&<\varepsilon \E_x(\tau_{\delta_0}).
%\end{align*}
%Then we conclude that 
\begin{align*}
0\leq \lim_{\delta \downarrow 0 }\frac{1}{\E_x(\tau_{\delta})}\E_x\left( \int_0^{\tau_{\delta}} [H(r+t)-\mathcal{U}(r+t)] \dd r \right)=H(t)-\mathcal{U}(t).
\end{align*}
Moreover, from equation \eqref{eq:equationintermsofHandmathcalU} we conclude that $H(t)=\mathcal{U}(t)$ for all $t\in [0,\m)$ and the conclusion holds. 
\end{proof}
Hence, for any $(t,x)\in [0,\m)\times \R$, we can write
\begin{align*}
\V(t,x)
&=\int_0^{\m-t}	K_1(t,x,r,\b(r+t))\dd r -\int_0^{\m-t} \mathcal{V}(r+t)  K_2(t,x,r,\b(r+t))\dd r,
\end{align*}
where for any $r,s\in [0,\m)$, $b\geq 0$ and $x\in \R$,
\begin{align*}
\mathcal{V}(t)&=\int_{(-\infty,0)} \V(t,\b(t)+y)\Pi(\dd y)\\
K_1(t,x,s,b)&= \E\left(  \G(s+t,X_s+x)\I_{\{X_s <b-x \}}\right)\\
K_2(t,x,s,b)&=  \E\left(  e^{-\rho(X_s+x -b) } \I_{\{X_s> b-x \}} \right).
\end{align*}
Take a value $h_0>0$ sufficiently small. Hence the functions $\b$ and $\mathcal{V}$ satisfy the integral equations
\begin{align*}
\int_0^{\m-t}	&K_1(t,\b(t),r,\b(r+t))\dd r  -\int_0^{\m-t} \mathcal{V}(r+t,\b(r+t))  K_2(t,\b(t),r,\b(r+t))\dd r =0\\
\int_0^{\m-t}	&K_1(t,\b(t)+h_0,r,\b(r+t))\dd r  -\int_0^{\m-t} \mathcal{V}(r+t,\b(r+t))  K_2(t,\b(t)+h_0,r,\b(r+t))\dd r =0
\end{align*}
for all $t\in [0,\m]$. We can approximate the integrals above by Riemann sums so a numerical approximation can be implement. Indeed, take $n \in \mathbb{Z}_+$ sufficiently large and define $h=\m/n$.  For each $k\in \{0,1,2,\ldots,n\}$, we define $t_k=kh$. Then the sequence of times $\{ t_k, k= 0,1,\ldots,n \}$ is a partition of the interval $[0,\m]$. Then, for any $x\in \R$ and $t\in [t_k,t_{k+1})$, we approximate $\V(t,x)$ by 
\begin{align*}
\V_h(t_k,x)=\sum_{i=k}^{n-1} [K_1(t_k,x,t_{i-k+1},b_{i})-\mathcal{V}_iK_2(t_k,x,t_{i-k+1},b_{i}) ]h,
\end{align*}
where the sequence $\{(b_k, \mathcal{V}_k), k=0,1,\ldots,n-1 \}$ is a solution to 
\begin{align*}
\sum_{i=k}^{n-1} [K_1(t_k,b_k,t_{i-k+1},b_{i})-\mathcal{V}_{i}K_2(t_k,b_k,t_{i-k+1},b_{i}) ]=0\\
\sum_{i=k}^{n-1} [K_1(t_k,b_k+h_0,t_{i-k+1},b_{i})-\mathcal{V}_{i}K_2(t_k,b_k+h_0,t_{i-k+1},b_{i}) ]=0
\end{align*}
for each $k\in \{0,1,\ldots, n -1\}$. Note that, for $n$ sufficiently large, the sequence $\{(b_k,\mathcal{V}_k), k=0,1,\ldots,n \}$ is a numerical approximation to the sequence $\{(\b(t_k), \mathcal{V}(t_k)), k=0,1,\ldots,n \}$ (provided that $\V_h\leq 0$) and can be calculated by using backwards induction. The functions $K_1$ and $K_2$ can be estimated using simulation methods. In the figures below we include a plot of the numerical calculation of $\b$ and $\V(0,x)$ using the parameters $\theta=\log(2)/10$, $\mu=3$, $\sigma=\lambda=\rho=1$. 

\begin{figure}[htbp]
\centering
\includegraphics[scale=0.3]{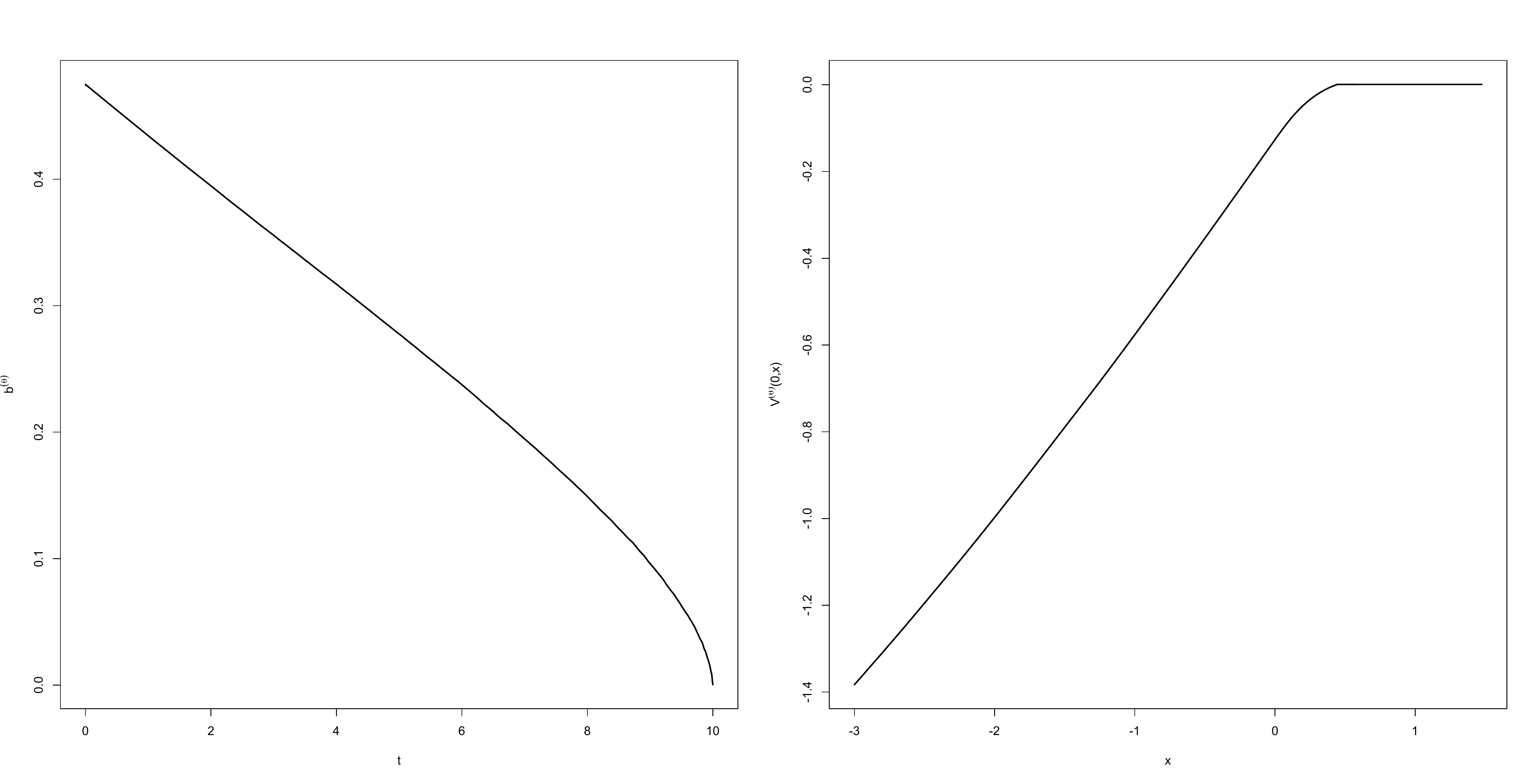}
\caption{Brownian motion with drift perturbed by a compound Poisson process with exponential sized jumps with $\mu=3$ and $\sigma=\lambda=\rho=1$. Left hand side: Optimal boundary; Right hand side: Value function fixing $t=0$.}
\label{fig:Brownianmotionmu2sigma1}
\end{figure}

%
%\begin{figure}[hbtp]
%\centering
%\includegraphics[scale=.33]{plotoptimalboudnarybandmathcalVBMwithexpjumps.pdf}
%\label{Fig:numericalcalculationsKoumodel}
%\caption{Numerical calculations of (left) the curve $\b$ and (right) the function $\mathcal{V}$.}
%\end{figure}
%
%
%\begin{figure}[hbtp]
%\centering
%\includegraphics[scale=.33]{plotsphiandValuefunctionBMwithexpjumps.pdf}
%\label{Fig:numericalcalculationsKoumodelsecondpart}
%\caption{Numerical calculations of (left) the function $\varphi^{(\theta)}$ and (right) the function $\V(0,x)$.}
%\end{figure}

%|||||
%\begin{figure}[hbtp]
%\centering
%\includegraphics[scale=.3]{GraphsoptimalboundaryBMwithexpjumps.pdf}
%\label{Fig:numericalcalculationsKoumodel}
%\caption{Numerical calculation of the boundary function $\b$.}
%\end{figure}

\newpage
\bibliography{bibexponentialtime}
\bibliographystyle{apalike}
\nocite{*}

\end{document}